\documentclass[12pt]{article}
\usepackage{amsmath,amscd,amsthm,amssymb}

\def\dual{\,^{^{\complement}}\!}
\def\Rn{{\mathbb{R}^n}}

\def\i{\infty}
\def\a {\alpha}

 \newtheorem{thm}{Theorem}[section]
 \newtheorem{cor}[thm]{Corollary}
 \newtheorem{lem}[thm]{Lemma}
 
 \theoremstyle{definition}
 \newtheorem{defn}[thm]{Definition}
 \theoremstyle{remark}
 \newtheorem{rem}[thm]{Remark}
 
 \numberwithin{equation}{section}


\usepackage{amssymb}
\usepackage{color}

\def\R{{\mathbb{R}}}
\def\Rn{{\mathbb{R}^n}}
\def\a {\alpha}
\def\i{\infty}

\def\L1loc{L_{\Phi}^{\rm loc}(\Rn)}

\def\dual{\,^{^{\complement}}\!}


\newcommand{\es}{\mathop{\rm ess \; inf}\limits}
\newcommand{\lb}{\lambda}
\newcommand{\vi}{\varphi}

\begin{document}

\begin{center}
\LARGE A characterization for Adams-type boundedness of the fractional maximal operator on generalized Orlicz-Morrey spaces
\end{center}

\

\centerline{\large Fatih Deringoz$^{a,}$\footnote{
Corresponding author.
\\
E-mail adresses: deringoz@hotmail.com (F. Deringoz), vagif@guliyev.com (V.S. Guliyev),  sabhasanov@gmail.com (S.G. Hasanov).}, Vagif S. Guliyev$^{a,b}$, Sabir G. Hasanov$^{c}$ }

\

\centerline{$^{a}$\it Department of Mathematics, Ahi Evran University, Kirsehir, Turkey}

\centerline{$^{b}$\it Institute of Mathematics and Mechanics of NAS of Azerbaijan, Baku, Azerbaijan}

\centerline{$^{c}$\it Ganja State University, Ganja, Azerbaijan}

\

\begin{abstract}
In the present paper, we shall give a characterization for weak/strong
Adams type boundedness of the fractional maximal operator on generalized Orlicz-Morrey spaces.
\end{abstract}

\

\noindent{\bf AMS Mathematics Subject Classification:} $~~$ 42B25; 42B35; 46E30

\noindent{\bf Key words:} {generalized Orlicz-Morrey space; fractional maximal operator}

\

\section{Introduction}

As is well known,  Morrey  spaces are widely used to investigate the local behavior of solutions to second order elliptic partial differential
equations (PDE). Recall that the classical Morrey spaces $\mathcal{M}^{p,\lb}(\Rn)$ are defined by
\begin{equation*}
\mathcal{M}^{p,\lb}(\Rn) = \Big\{ f \in L^p_{\rm loc}(\Rn) : \left\| f\right\|_{\mathcal{M}^{p,\lb}}
= \sup_{x \in \Rn, \; r>0 } r^{-\frac{\lb}{p}} \|f\|_{L^p (B(x,r))} < \infty  \Big\},
\end{equation*}
where $0 \le \lambda \le  n,$ $1\le p < \infty $. $\mathcal{M}^{p,\lb}(\Rn)$ was an expansion of $L^p(\Rn)$ in the sense that $\mathcal{M}^{p,0}(\Rn)=L^p(\Rn)$ and $\mathcal{M}^{p,n}(\Rn)=L^\infty(\Rn)$.
Here and everywhere in the sequel $B(x,r)$ is the ball in $\Rn$ of radius $r$ centered
 at $x$ and  $|B(x,r)|=v_n r^n$ is its Lebesgue measure, where $v_n$ is the volume of the unit ball in $\Rn$.

By $W\mathcal{M}^{p,\lambda}(\Rn)$ we denote the weak Morrey space defined as the set of functions $f$  in the local weak space $WL^{p}_{\rm loc}(\Rn) $ for which
$$
\left\| f\right\|_{W\mathcal{M}^{p,\lambda }} = \sup_{x\in  \Rn, \; r>0} r^{-\frac{\lambda}{p}}
\|f\|_{WL^{p}(B(x,r))} <\infty.
$$
The spaces $\mathcal{M}^{p,\vi}(\Rn)$ defined by the norm
\begin{equation*}
\left\| f\right\|_{\mathcal{M}^{p,\vi}}
= \sup_{x \in \Rn, \; r>0 } \varphi(x,r)^{-1}\,|B(x,r)|^{-\frac{1}{p}} \|f\|_{L^p (B(x,r))}
\end{equation*}
with a  function $\vi$ positive and measurable on $\Rn\times (0,\infty )$ are known as generalized Morrey spaces. Also by $W\mathcal{M}^{p,\vi}(\Rn)$ we denote the weak generalized Morrey space of all functions $f\in  WL^{p}_{\rm loc}(\Rn) $ for which
$$
\left\| f\right\|_{W\mathcal{M}^{p,\vi}}
= \sup_{x \in \Rn, \; r>0 } \varphi(x,r)^{-1}\,|B(x,r)|^{-\frac{1}{p}} \|f\|_{WL^p (B(x,r))}<\infty.
$$
Note that, in the case $\varphi(x,r)=r^{\frac{\lambda-n}{p}}$, we get the classical Morrey space $\mathcal{M}^{p,\lambda}(\Rn)$ from generalized Morrey space $\mathcal{M}^{p,\vi}(\Rn)$.

The Orlicz space was first introduced by Orlicz in \cite{456s, 456sb} as generalizations of Lebesgue spaces $L^p$. Since then
this space has been one of important functional frames in the mathematical analysis, and
especially in real and harmonic analysis. Orlicz space is also an appropriate substitute for $L^1$ space
when $L^1$ space does not work. For example,
the Hardy-Littlewood maximal operator
\begin{equation}\label{maxoperator}
Mf(x)=\sup\limits_{r>0}\frac{1}{|B(x,r)|} \int _{B(x,r)}|f(y)|dy
\end{equation}
is bounded on $L^p$ for $1 < p < \infty $, but not on $L^1$,  but using Orlicz spaces, we can investigate
the boundedness of the maximal operator near $p = 1$, see
\cite{315ze, 315zg} and \cite{95zzc} for  more precise
statements.

A natural step in the theory  of functions spaces was to study generalized Orlicz-Morrey spaces
$\mathcal{M}^{\Phi,\vi}(\Rn)$ where the "Morrey-type measuring"of regularity of functions is realized with respect to the Orlicz
norm over balls instead of the Lebesgue one. Such spaces were first introduced and studied by Nakai \cite{Nakai0}. Then another kind of generalized Orlicz-Morrey spaces were introduced by Sawano $et \,al.$ \cite{SawSugTan}. The generalized Orlicz-Morrey spaces used in this paper was introduced in \cite{DerGulSam}. In words of \cite{GulHasSawNak}, our generalized Orlicz-Morrey space is the third kind and the ones in \cite{Nakai0} and \cite{SawSugTan} are the first kind and second kind, respectively. According to the examples in \cite{GalaSawTan}, one can say that the generalized Orlicz-Morrey spaces of the first and second kind are different and that second kind and third kind are different. However, it is not known that relation between first and third kind.

Let $0<\a<n$. The fractional maximal operator $M_{\a}$ and the Riesz potential operator $I_{\a}$ are  defined by
\begin{equation*}
M_{\a} f(x)=\sup_{t>0}|B(x,t)|^{-1+ \frac{\a}{n}}\int _{B(x,t)} |f(y)|dy,
~~~
I_{\a} f(x)=\int _{\Rn} \frac{f(y)}{|x-y|^{n-\a}}dy.
\end{equation*}
If $\a=0$, then $M \equiv M_{0}$ is the Hardy-Littlewood maximal operator defined in \eqref{maxoperator}.

The classical result by Hardy-Littlewood-Sobolev states that the operator $I_{\a} $ is of weak type $(p,np/(n-\a p))$ if $1\le p< n/\a$ and of strong
type $(p,np/(n-\a p))$ if $1< p< n/\a$. Also the operator $M_{\a}$ is of weak type $(p,np/(n-\a p))$ if $1\le p\le n/\a$ and of strong type $(p,np/(n-\a p))$ if $1< p\le n/\a$.

Around the 1970's, the Hardy-Littlewood-Sobolev inequality is extended from Lebesgue spaces to Morrey spaces.
The following theorem was proved by Adams in \cite{Adams}.
\begin{thm} (Adams \cite{Adams}) \label{Adams1}
Let $0<\alpha<n$, $1<p<\frac{n}{\alpha}$, $0<\lambda<n-\alpha p$ and
$\frac{1}{p}-\frac{1}{q}=\frac{\alpha}{n-\lambda}$.
Then for $p>1$, the operator $I_{\alpha}$ is bounded from $\mathcal{M}^{p,\lambda}(\Rn)$
to $\mathcal{M}^{q,\lambda}(\Rn)$ and for $p=1$, $I_{\alpha}$ is bounded from $\mathcal{M}^{1,\lambda}(\Rn)$  to $W\mathcal{M}^{q,\lambda}(\Rn)$.
\end{thm}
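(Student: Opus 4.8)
The plan is to prove Adams' theorem for $I_\alpha$ on classical Morrey spaces via the now-standard pointwise estimate that reduces the problem to the boundedness of the Hardy--Littlewood maximal operator $M$. The starting point is the pointwise inequality
\begin{equation*}
|I_\alpha f(x)| \lesssim r^\alpha Mf(x) + \int_r^\infty t^{\alpha-1}\, |B(x,t)|^{-1}\!\!\int_{B(x,t)}|f(y)|\,dy\; \frac{dt}{t}\cdot t,
\end{equation*}
which, after splitting $\int_{\Rn}\frac{f(y)}{|x-y|^{n-\alpha}}\,dy = \int_{|x-y|<r} + \int_{|x-y|\ge r}$, more precisely gives
$|I_\alpha f(x)| \lesssim r^\alpha Mf(x) + \int_r^\infty \tau^{\alpha-n-1}\big(\int_{B(x,\tau)}|f|\big)\,d\tau$. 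For the tail integral one uses $\int_{B(x,\tau)}|f| \le |B(x,\tau)|^{1-1/p}\|f\|_{L^p(B(x,\tau))} \le v_n^{1-1/p}\tau^{n-n/p}\cdot \tau^{\lambda/p}\|f\|_{\mathcal{M}^{p,\lambda}}$, so the tail is bounded by $C\,r^{\alpha - (n-\lambda)/p}\|f\|_{\mathcal{M}^{p,\lambda}}$, valid precisely because $\alpha < (n-\lambda)/p$, i.e. $\lambda < n-\alpha p$. Hence for every $r>0$,
\begin{equation*}
|I_\alpha f(x)| \lesssim r^\alpha Mf(x) + r^{\alpha-\frac{n-\lambda}{p}}\|f\|_{\mathcal{M}^{p,\lambda}}.
\end{equation*}

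Next I would optimize in $r$. Choosing $r = \big(\|f\|_{\mathcal{M}^{p,\lambda}}/Mf(x)\big)^{p/(n-\lambda)}$ balances the two terms and yields the key pointwise bound
\begin{equation*}
|I_\alpha f(x)| \lesssim \big(Mf(x)\big)^{1-\frac{\alpha p}{n-\lambda}}\,\|f\|_{\mathcal{M}^{p,\lambda}}^{\frac{\alpha p}{n-\lambda}} = \big(Mf(x)\big)^{p/q}\,\|f\|_{\mathcal{M}^{p,\lambda}}^{1-p/q},
\end{equation*}
using the relation $\frac1p-\frac1q = \frac{\alpha}{n-\lambda}$, which gives exactly $1-\frac{\alpha p}{n-\lambda} = \frac pq$. (If $Mf(x)=0$ then $f=0$ a.e. and there is nothing to prove; if it is $+\infty$ the bound is vacuous.) This is the heart of the argument, and the main technical point to get right is the admissibility of the optimizing radius and the precise bookkeeping of exponents.

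From the pointwise bound the conclusion is immediate for $1<p<n/\alpha$: for any ball $B=B(x_0,\rho)$,
\begin{equation*}
\|I_\alpha f\|_{L^q(B)} \lesssim \|f\|_{\mathcal{M}^{p,\lambda}}^{1-p/q}\,\Big(\int_B (Mf)^p\Big)^{1/q} = \|f\|_{\mathcal{M}^{p,\lambda}}^{1-p/q}\,\|Mf\|_{L^p(B)}^{p/q},
\end{equation*}
and since $M$ is bounded on $\mathcal{M}^{p,\lambda}(\Rn)$ for $p>1$ (equivalently, $\|Mf\|_{L^p(B)} \lesssim \rho^{\lambda/p}\|Mf\|_{\mathcal{M}^{p,\lambda}} \lesssim \rho^{\lambda/p}\|f\|_{\mathcal{M}^{p,\lambda}}$), we get $\rho^{-\lambda/q}\|I_\alpha f\|_{L^q(B)} \lesssim \rho^{-\lambda/q}\,\rho^{(\lambda/p)(p/q)}\|f\|_{\mathcal{M}^{p,\lambda}} = \|f\|_{\mathcal{M}^{p,\lambda}}$, as desired. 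For $p=1$ the same pointwise bound gives $|I_\alpha f(x)|^q \lesssim Mf(x)\,\|f\|_{\mathcal{M}^{1,\lambda}}^{q-1}$; hence $|\{x\in B : |I_\alpha f(x)|>t\}| \le |\{x : Mf(x) > c\,t^q\|f\|_{\mathcal{M}^{1,\lambda}}^{1-q}\}|$, and applying the weak $(1,1)$ boundedness of $M$ on Morrey spaces (i.e. $M\colon \mathcal{M}^{1,\lambda}\to W\mathcal{M}^{1,\lambda}$) one controls this by $C\,\rho^\lambda\,t^{-q}\|f\|_{\mathcal{M}^{1,\lambda}}^{q}$, which after taking the sup over balls is exactly the $W\mathcal{M}^{q,\lambda}$ bound. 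The one genuine obstacle, aside from exponent bookkeeping, is citing the correct form of the Morrey-boundedness of $M$; since that is the $\alpha=0$ case it is either assumed known or follows from the Fefferman--Stein / Chiarenza--Frasca argument, and everything else is routine.
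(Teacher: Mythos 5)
Your argument is correct: the Hedberg-type splitting $I_\alpha f=\int_{|x-y|<r}+\int_{|x-y|\ge r}$, the tail estimate via H\"older and the Morrey norm (which converges exactly because $\lambda<n-\alpha p$), the balancing choice of $r$ giving $|I_\alpha f(x)|\lesssim (Mf(x))^{p/q}\,\|f\|_{\mathcal{M}^{p,\lambda}}^{1-p/q}$, and the reduction to the Chiarenza--Frasca boundedness of $M$ on $\mathcal{M}^{p,\lambda}(\Rn)$ (strong for $p>1$, weak for $p=1$) all check out, including the exponent bookkeeping $1-\frac{\alpha p}{n-\lambda}=\frac{p}{q}$. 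Note, however, that the paper does not prove Theorem \ref{Adams1} at all: it is quoted from Adams \cite{Adams} as a known result. That said, your route is essentially the same strategy the paper itself uses for its own main results (see the proof of Theorem \ref{AdGulFrMaxOrlMor}): decompose $f$ on a doubled ball, estimate the far part by a tail supremum, invoke ``Hedberg's trick'' to balance against $Mf$, and conclude via the (weak or strong) boundedness of the maximal operator on the Morrey-type space --- so your proposal is a faithful specialization of the paper's method from the generalized Orlicz--Morrey setting for $M_\alpha$ back to the classical Morrey setting for $I_\alpha$. The only external input you rely on, the Morrey boundedness of $M$, is correctly identified and is standard.
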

Recall that, for $0<\alpha<n$,
\begin{equation*}\label{eq001}
M_{\alpha}f(x)\leq \upsilon_n^{\frac{\alpha}{n}-1}I_{\alpha}(|f|)(x),
\end{equation*}
hence Theorem \ref{Adams1} also implies boundedness of the
fractional maximal operator $M_{\alpha}$.

Guliyev \cite{GulJIA} (see also, \cite{GulDoc,GulBook}) extended the results of Spanne and Adams from Morrey spaces to generalized Morrey spaces (see also \cite{Gun2003}).
Later on, some of these results are obtained in \cite{GulAkbMam,GulShu} under weaker condition.

The boundedness of $M_{\a} $ from Orlicz space $L^{\Phi}(\Rn)$ to the corresponding another
Orlicz space $L^{\Psi}(\Rn)$ was studied in  \cite{95zzc}. There were given  necessary and sufficient conditions
for the boundedness of the operator $M_{\a}$  from $L^{\Phi}(\Rn)$ to $L^{\Psi}(\Rn)$ and also  from $L^{\Phi}(\Rn)$ to the weak Orlicz space $WL^{\Psi}(\Rn)$.

In this paper, we shall give a characterization for weak/strong
Adams type boundedness of the fractional maximal operator on generalized Orlicz-Morrey spaces.

By $A \lesssim B$ we mean that $A \le C B$ with some positive constant $C$
independent of appropriate quantities. If $A \lesssim B$ and $B \lesssim A$, we
write $A\approx B$ and say that $A$ and $B$ are  equivalent.

\section{Preliminaries}
\subsection{On Young Functions and Orlicz Spaces}
We recall the definition of Young functions.

\begin{defn}\label{def2} A function $\Phi : [0,\infty) \rightarrow [0,\infty]$ is called a Young function if $\Phi$ is convex, left-continuous, $\lim\limits_{r\rightarrow +0} \Phi(r) = \Phi(0) = 0$ and $\lim\limits_{r\rightarrow \infty} \Phi(r) = \infty$.
\end{defn}
From the convexity and $\Phi(0) = 0$ it follows that any Young function is increasing.
If there exists $s \in  (0,\infty )$ such that $\Phi(s) = \infty $,
then $\Phi(r) = \infty $ for $r \geq s$.
The set of  Young  functions such that
$0<\Phi(r)<\infty$ for $0<r<\infty$
will be denoted by  $\mathcal{Y}.$
If $\Phi \in  \mathcal{Y}$, then $\Phi$ is absolutely continuous on every closed interval in $[0,\infty )$
and bijective from $[0,\infty )$ to itself.
For a Young function $\Phi$ and  $0 \leq s \leq \infty $, let
$$\Phi^{-1}(s)=\inf\{r\geq 0: \Phi(r)>s\}.$$
If $\Phi \in  \mathcal{Y}$, then $\Phi^{-1}$ is the usual inverse function of $\Phi$.

Note that Young functions satisfy the properties
\begin{equation}\label{cvccv}
\left\{
\begin{array}{ccc}
\Phi(\alpha t)\leq \alpha \Phi(t),
& \text{  if } &0\le\a\le1 \\
\Phi(\a t)\geq \a \Phi(t),&\text{  if }& \a>1
\end{array}
\right.
\text{ and }
\left\{
\begin{array}{ccc}
\Phi^{-1}(\alpha t)\geq \alpha \Phi^{-1}(t),
& \text{  if } &0\le\a\le1 \\
\Phi^{-1}(\a t)\leq \a \Phi^{-1}(t),&\text{  if }& \a>1.
\end{array}
\right.
\end{equation}

\begin{rem}
We can easily see that $\Phi\big(Cr\big)\approx \Phi\big(r\big)$ and $\Phi^{-1}\big(Cr\big)\approx \Phi^{-1}\big(r\big)$ for a positive constant $C$ from \eqref{cvccv}.
\end{rem}

It is well known that
\begin{equation}\label{2.3}
r\leq \Phi^{-1}(r)\widetilde{\Phi}^{-1}(r)\leq 2r \qquad \text{for } r\geq 0,
\end{equation}
where $\widetilde{\Phi}(r)$ is defined by
\begin{equation*}
\widetilde{\Phi}(r)=\left\{
\begin{array}{ccc}
\sup\{rs-\Phi(s): s\in  [0,\infty )\}
& , & r\in  [0,\infty ) \\
\infty &,& r=\infty .
\end{array}
\right.
\end{equation*}

A Young function $\Phi$ is said to satisfy the
 $\Delta_2$-condition, denoted also as   $\Phi \in  \Delta_2$, if
$
\Phi(2r)\le C\Phi(r), \, r>0
$
for some $C>1$. If $\Phi \in  \Delta_2$, then $\Phi \in  \mathcal{Y}$. A Young function $\Phi$ is said to satisfy the
$\nabla_2$-condition, denoted also by  $\Phi \in  \nabla_2$, if
$\Phi(r)\leq \frac{1}{2C}\Phi(Cr), \, r \geq 0$
for some $C>1$.

A Young function $\Phi$ is said to satisfy the
$\Delta^{\prime}$-condition, denoted also as   $\Phi \in  \Delta^{\prime}$, if
$$
\Phi(tr)\le C\Phi(t)\Phi(r),\qquad t,r\geq 0
$$
for some $C>1$.

Note that, each element of $\Delta^{\prime}$-class is also an element of $\Delta_2$-class.

\begin{rem}\label{trvorlmor}
Let $\Phi\in\Delta^{\prime}$, then we have
$$
\Phi(tr)\le C\Phi(t)\Phi(r),\qquad t,r\geq 0.
$$
If we set $\Phi(t)=u$ and $\Phi(r)=v$, we get $\Phi(\Phi^{-1}(u)\Phi^{-1}(v))\leq Cuv\Rightarrow \Phi^{-1}(u)\Phi^{-1}(v)\leq \Phi^{-1}(Cuv)\leq C\Phi^{-1}(uv)$, since $\Phi \in  \mathcal{Y}$ and $\Phi^{-1}$ is concave.
\end{rem}

\begin{defn} (Orlicz Space). For a Young function $\Phi$, the set
\begin{equation*}
L^{\Phi}(\Rn)=\Big\{f\in  L^1_{\rm loc}(\Rn): \int_{\Rn} \Phi(k|f(x)|)dx<\infty
 \text{ for some $k>0$  }\Big\}
\end{equation*}
is called Orlicz space. If $\Phi(r)=r^{p},\, 1\le p<\infty $, then $L^{\Phi}(\Rn)=L^{p}(\Rn)$.
If $\Phi(r)=0, \, 0\le r\le 1$ and $\Phi(r)=\infty ,\, r > 1$, then $L^{\Phi}(\Rn)=L^\infty (\Rn)$. The  space $L^{\Phi}_{\rm loc}(\Rn)$ is defined as the set of all functions $f$ such that  $f\chi_{_B}\in  L^{\Phi}(\Rn)$ for all balls $B \subset \Rn$.
\end{defn}

$L^{\Phi}(\Rn)$ is a Banach space with respect to the norm
$$\|f\|_{L^{\Phi}}=\inf\left\{\lambda>0:\int _{\Rn}\Phi\Big(\frac{|f(x)|}{\lambda}\Big)dx\leq 1\right\}.$$
We note that
\begin{equation}\label{orlpr}
\int _{\Rn}\Phi\Big(\frac{|f(x)|}{\|f\|_{L^{\Phi}}}\Big)dx\leq 1.
\end{equation}

\begin{lem}\label{lemHold}\cite{DerGulSam}
For a Young function $\Phi$ and $B=B(x,r)$, the following inequality is valid
$$\|f\|_{L^{1}(B)} \leq 2 |B| \Phi^{-1}\left(|B|^{-1}\right) \|f\|_{L^{\Phi}(B)},$$
where $\|f\|_{L^{\Phi}(B)}=\|f\chi_{_B}\|_{L^{\Phi}}$.
\end{lem}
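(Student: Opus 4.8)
The plan is to derive the inequality from the generalized Hölder inequality in Orlicz spaces together with an explicit computation of the Luxemburg norm of a characteristic function. Recall that for any Young function $\Phi$ with complementary function $\widetilde{\Phi}$ one has the generalized Hölder inequality $\|fg\|_{L^1(\Rn)} \le 2\,\|f\|_{L^{\Phi}}\,\|g\|_{L^{\widetilde{\Phi}}}$. Applying this with $g=\chi_{_B}$ gives at once
$$
\|f\|_{L^1(B)} = \|f\chi_{_B}\|_{L^1(\Rn)} \le 2\,\|f\|_{L^{\Phi}(B)}\,\|\chi_{_B}\|_{L^{\widetilde{\Phi}}},
$$
so the whole matter reduces to establishing the bound $\|\chi_{_B}\|_{L^{\widetilde{\Phi}}} \le |B|\,\Phi^{-1}\!\left(|B|^{-1}\right)$.

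To this end I would compute the norm directly from the definition of the Luxemburg norm:
$$
\|\chi_{_B}\|_{L^{\widetilde{\Phi}}} = \inf\Big\{\lambda>0 : \int_{\Rn}\widetilde{\Phi}\Big(\tfrac{\chi_{_B}(x)}{\lambda}\Big)\,dx \le 1\Big\} = \inf\Big\{\lambda>0 : \widetilde{\Phi}\big(\tfrac{1}{\lambda}\big) \le |B|^{-1}\Big\}.
$$
Using the definition of the generalized inverse together with the monotonicity and left-continuity of $\widetilde{\Phi}$, the condition $\widetilde{\Phi}(1/\lambda)\le |B|^{-1}$ is equivalent to $1/\lambda \le \widetilde{\Phi}^{-1}(|B|^{-1})$, so that $\|\chi_{_B}\|_{L^{\widetilde{\Phi}}} = \big(\widetilde{\Phi}^{-1}(|B|^{-1})\big)^{-1}$. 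Finally, I invoke \eqref{2.3} with $r=|B|^{-1}$, namely $|B|^{-1}\le \Phi^{-1}(|B|^{-1})\,\widetilde{\Phi}^{-1}(|B|^{-1})$, which rearranges to $\big(\widetilde{\Phi}^{-1}(|B|^{-1})\big)^{-1} \le |B|\,\Phi^{-1}(|B|^{-1})$; combining this with the displayed Hölder bound yields the claimed inequality.

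The argument is essentially routine; the only point requiring a little care is the passage from $\widetilde{\Phi}(1/\lambda)\le |B|^{-1}$ to $1/\lambda \le \widetilde{\Phi}^{-1}(|B|^{-1})$, where one uses that $\widetilde{\Phi}$ is a Young function, hence non-decreasing and left-continuous, so that the sublevel set $\{t\ge 0 : \widetilde{\Phi}(t)\le s\}$ is the closed interval $[0,\widetilde{\Phi}^{-1}(s)]$. If one prefers to avoid identifying the norm of $\chi_{_B}$ exactly, it suffices to verify the one-sided estimate $\int_{B}\widetilde{\Phi}\big(\widetilde{\Phi}^{-1}(|B|^{-1})\big)\,dx \le 1$, which shows directly that $\lambda = \big(\widetilde{\Phi}^{-1}(|B|^{-1})\big)^{-1}$ is admissible in the infimum, and that is all that is needed. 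Note also that the chain of inequalities goes through verbatim even when $\widetilde{\Phi}\notin\mathcal{Y}$ and $\widetilde{\Phi}^{-1}$ is only a generalized inverse, since \eqref{2.3} is stated for arbitrary Young functions.
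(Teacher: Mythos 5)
Your proof is correct: the combination of the generalized H\"older inequality, the identity $\|\chi_{_B}\|_{L^{\widetilde{\Phi}}}=1/\widetilde{\Phi}^{-1}(|B|^{-1})$ (Lemma \ref{charorlc} applied to the Young function $\widetilde{\Phi}$), and the left-hand inequality in \eqref{2.3} gives exactly the stated bound with constant $2$. The paper itself gives no proof, citing \cite{DerGulSam}, and your argument is essentially the same as the one in that reference, so there is nothing to add.
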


By elementary calculations we have the following.
\begin{lem}\label{charorlc}
Let $\Phi$ be a Young function and $B$ be a set in $\mathbb{R}^n$ with finite Lebesgue measure. Then
\begin{equation*}
\|\chi_{_B}\|_{L^{\Phi}} = \frac{1}{\Phi^{-1}\left(|B|^{-1}\right)}.
\end{equation*}
\end{lem}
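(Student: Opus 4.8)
The plan is to evaluate the Luxemburg norm directly from its definition. For $f=\chi_{_B}$ the defining modular splits trivially: for each $\lambda>0$,
$$\int_{\Rn}\Phi\Big(\frac{|\chi_{_B}(x)|}{\lambda}\Big)\,dx=\int_B\Phi\Big(\frac1\lambda\Big)\,dx=|B|\,\Phi\Big(\frac1\lambda\Big),$$
so that
$$\|\chi_{_B}\|_{L^{\Phi}}=\inf\Big\{\lambda>0:\Phi\big(\tfrac1\lambda\big)\le|B|^{-1}\Big\}.$$
It remains to identify this infimum with $1/\Phi^{-1}(|B|^{-1})$, and the heart of the matter is the elementary equivalence $\Phi(t)\le s \iff t\le\Phi^{-1}(s)$, valid for any Young function $\Phi$ and all $t\ge0$, $0\le s\le\infty$, where $\Phi^{-1}(s)=\inf\{r\ge0:\Phi(r)>s\}$ as defined above.

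For the forward implication, if $\Phi(t)\le s$ then every $r$ with $\Phi(r)>s$ satisfies $r>t$ by monotonicity of $\Phi$, so $t$ is a lower bound for $\{r:\Phi(r)>s\}$ and hence $t\le\Phi^{-1}(s)$. For the reverse implication, suppose $t\le\Phi^{-1}(s)$ but $\Phi(t)>s$; since $\Phi$ is left-continuous, $\lim_{t'\uparrow t}\Phi(t')=\Phi(t)>s$, so some $t'<t$ has $\Phi(t')>s$, whence $\Phi^{-1}(s)\le t'<t$, a contradiction. Applying this equivalence with $t=1/\lambda$ and $s=|B|^{-1}$ turns the constraint $\Phi(1/\lambda)\le|B|^{-1}$ into $1/\lambda\le\Phi^{-1}(|B|^{-1})$, i.e. $\lambda\ge1/\Phi^{-1}(|B|^{-1})$.

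Finally one checks that $\Phi^{-1}(|B|^{-1})\in(0,\infty)$, so that dividing is legitimate: it is finite because $\Phi(r)\to\infty$ as $r\to\infty$, and it is strictly positive because $\Phi(0)=0$ together with left-continuity forces $\Phi(r)<|B|^{-1}$ for all sufficiently small $r>0$ (here $|B|^{-1}>0$ since $|B|$ is finite; the trivial case $|B|=0$ may be excluded). Hence the set of admissible $\lambda$ is exactly the ray $[1/\Phi^{-1}(|B|^{-1}),\infty)$, whose infimum is $1/\Phi^{-1}(|B|^{-1})$, which is the claimed identity. There is no genuine obstacle here; the only point requiring care is the handling of the generalized inverse $\Phi^{-1}$ and the use of left-continuity of $\Phi$ in the borderline case $\Phi(1/\lambda)=|B|^{-1}$.
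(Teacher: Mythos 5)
Your computation is correct and is exactly the ``elementary calculation'' the paper leaves to the reader: reduce the Luxemburg norm for $\chi_{_B}$ to the constraint $\Phi(1/\lambda)\le |B|^{-1}$ and convert it via the generalized inverse, using left-continuity to handle the borderline case, so the admissible set is the closed ray $[1/\Phi^{-1}(|B|^{-1}),\infty)$. One cosmetic remark: the strict positivity of $\Phi^{-1}(|B|^{-1})$ comes from the requirement $\lim_{r\to 0^{+}}\Phi(r)=\Phi(0)=0$ in Definition~\ref{def2} (continuity from the right at $0$), not from left-continuity, but the needed fact is part of the paper's definition of a Young function, so the argument stands.
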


The following theorem is an analogue of Lebesgue differentiation theorem in Orlicz spaces.

\begin{thm}\label{lebesgue}\cite{HenKlep}
Suppose that $\Phi$ is a Young function and let $f\in L^{\Phi}(\Rn)$ be
nonnegative. Then
\begin{equation*}
\liminf_{r\to 0+}\frac{\|f\chi_{B(x,r)}\|_{L^{\Phi}}}{\|\chi_{B(x,r)}\|_{L^{\Phi}}}\geq f(x), \quad \text{for almost every }x\in\Rn.
\end{equation*}
If we moreover assume that $\Phi\in\Delta^{\prime}$, then
\begin{equation*}
\lim_{r\to 0+}\frac{\|f\chi_{B(x,r)}\|_{L^{\Phi}}}{\|\chi_{B(x,r)}\|_{L^{\Phi}}}=f(x), \quad \text{for almost every }x\in\Rn.
\end{equation*}
\end{thm}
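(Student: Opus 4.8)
The plan is to establish the two assertions separately: the $\liminf$ bound for an arbitrary Young function, and — under the extra hypothesis $\Phi\in\Delta^{\prime}$ — the matching $\limsup$ bound, which upgrades the $\liminf$ to a genuine limit. Throughout I use that $f\in L^{\Phi}(\Rn)\subset L^1_{\rm loc}(\Rn)$, so almost every $x$ is a Lebesgue point of $f$, and I fix such an $x$; write $B_r=B(x,r)$.

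\emph{Lower bound.} Assuming without loss of generality $f(x)>0$, fix $0<\lambda<\mu<f(x)$ and set $A_r=\{y\in B_r: f(y)>\mu\}$. Since $x$ is a Lebesgue point and $f(x)>\mu$, Chebyshev's inequality yields $|A_r|/|B_r|\to 1$ as $r\to 0^{+}$. On $A_r$, using $f(y)>\mu$, monotonicity of $\Phi$, the second inequality in \eqref{cvccv} with $\alpha=\mu/\lambda>1$, and $\Phi(\Phi^{-1}(|B_r|^{-1}))=|B_r|^{-1}$, one gets
\[
\Phi\!\left(\frac{f(y)\,\Phi^{-1}(|B_r|^{-1})}{\lambda}\right)\ \ge\ \Phi\!\left(\frac{\mu}{\lambda}\,\Phi^{-1}(|B_r|^{-1})\right)\ \ge\ \frac{\mu}{\lambda}\,|B_r|^{-1}.
\]
Integrating over $B_r$ gives $\int_{B_r}\Phi\big(f(y)\Phi^{-1}(|B_r|^{-1})/\lambda\big)\,dy\ge(\mu/\lambda)\,|A_r|/|B_r|$, which exceeds $1$ once $|A_r|/|B_r|>\lambda/\mu$, i.e.\ for all sufficiently small $r$; by \eqref{orlpr} and monotonicity of the modular this forces $\|f\chi_{B_r}\|_{L^{\Phi}}\ge\lambda/\Phi^{-1}(|B_r|^{-1})=\lambda\,\|\chi_{B_r}\|_{L^{\Phi}}$ (Lemma \ref{charorlc}). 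Hence $\liminf_{r\to0^{+}}\|f\chi_{B_r}\|_{L^{\Phi}}/\|\chi_{B_r}\|_{L^{\Phi}}\ge\lambda$, and letting $\lambda\uparrow f(x)$ proves the first claim. (If $\Phi\notin\mathcal Y$ one argues the same way, the degenerate $L^\infty$-type case being classical.)

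\emph{Upper bound when $\Phi\in\Delta^{\prime}$.} Introduce the subadditive maximal operator $\mathcal M_{\Phi}h(x)=\sup_{r>0}\|h\chi_{B(x,r)}\|_{L^{\Phi}}/\|\chi_{B(x,r)}\|_{L^{\Phi}}$. Using $\Delta^{\prime}$ in the form $\Phi(ab)\le C\Phi(a)\Phi(b)$ together with $\Phi(\Phi^{-1}(|B|^{-1}))\le|B|^{-1}$, for $B=B(x,r)$ and $s>0$ one has
\[
\int_{B}\Phi\!\left(\frac{|f(y)|\,\Phi^{-1}(|B|^{-1})}{s}\right)dy\ \le\ \frac{C}{|B|}\int_{B}\Phi\!\left(\frac{|f(y)|}{s}\right)dy\ \le\ C\,M\!\big(\Phi(|f|/s)\big)(x),
\]
so $\mathcal M_{\Phi}f(x)\le s$ whenever $C\,M(\Phi(|f|/s))(x)\le1$; combining this with the weak $(1,1)$ bound for $M$ gives the weak-type estimate $|\{x:\mathcal M_{\Phi}f(x)>s\}|\lesssim\int_{\Rn}\Phi(|f(y)|/s)\,dy$. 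Next, for nonnegative $g\in C_c(\Rn)$ one has $\inf_{B(x,r)}g\le\|g\chi_{B(x,r)}\|_{L^{\Phi}}/\|\chi_{B(x,r)}\|_{L^{\Phi}}\le\sup_{B(x,r)}g$, and both bounds tend to $g(x)$, so the quotient converges to $g(x)$ at every $x$. Since $\Phi\in\Delta^{\prime}\subset\Delta_2$, the nonnegative elements of $C_c(\Rn)$ are dense in $L^{\Phi}(\Rn)$; given $\varepsilon>0$ choose such a $g$ with $\|f-g\|_{L^{\Phi}}$ small, set $\Omega f(x)=\limsup_{r\to0^{+}}\big|\|f\chi_{B(x,r)}\|_{L^{\Phi}}/\|\chi_{B(x,r)}\|_{L^{\Phi}}-f(x)\big|$, and use subadditivity to get $\Omega f\le\mathcal M_{\Phi}(f-g)+|f-g|$. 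Estimating $|\{\Omega f>2\lambda\}|$ by the weak-type bound for $\mathcal M_{\Phi}(f-g)$ (and \eqref{cvccv}) and by a Chebyshev-type inequality in $L^{\Phi}$ for $|f-g|$, then letting $\|f-g\|_{L^{\Phi}}\to0$, shows $|\{\Omega f>2\lambda\}|=0$ for every $\lambda>0$; hence $\Omega f=0$ a.e., which together with the first part gives the second claim.

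The main obstacle is the weak-type inequality for $\mathcal M_{\Phi}$ in the second step: this is exactly where $\Delta^{\prime}$ is indispensable, since it is what allows $\Phi^{-1}(|B|^{-1})$ to be pulled through $\Phi$ and the Orlicz average to be dominated by the Hardy–Littlewood maximal function of $\Phi(|f|)$, after which the classical weak $(1,1)$ bound closes the argument. Without $\Delta^{\prime}$ one only keeps the one-sided estimate from Lemma \ref{lemHold} (which loses the constant), and the quotient need not converge to $f(x)$, so only the $\liminf$ statement remains available. The remaining ingredients — density of $C_c(\Rn)$ in $L^{\Phi}$ under $\Delta_2$, and the elementary sandwiching for continuous $g$ — are routine.
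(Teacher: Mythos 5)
Your proposal is correct, but note that the paper itself offers no proof of this statement: Theorem \ref{lebesgue} is imported verbatim from \cite{HenKlep} (Hencl--Kleprlik), so any comparison is with that reference rather than with an argument in the text. Your route is the classical Lebesgue-differentiation scheme, and it is sound: for the lower bound you work at a Lebesgue point, use the density of the superlevel set $A_r=\{f>\mu\}$ in $B_r$, the convexity inequalities \eqref{cvccv}, the modular characterization \eqref{orlpr} and Lemma \ref{charorlc} to force $\|f\chi_{B_r}\|_{L^{\Phi}}\geq\lambda\|\chi_{B_r}\|_{L^{\Phi}}$ for small $r$; for the limit you set up the Orlicz maximal operator $\mathcal M_{\Phi}$, use $\Delta^{\prime}$ precisely to pull $\Phi^{-1}(|B|^{-1})$ through $\Phi$ and dominate the local modular by $M(\Phi(|f|/s))(x)$, deduce a weak-type estimate from the weak $(1,1)$ bound for $M$, and conclude by density of $C_c(\Rn)$ (available since $\Delta^{\prime}\subset\Delta_2$) together with the oscillation function $\Omega f$; this is exactly where $\Delta^{\prime}$ enters, and your identification of that point is right. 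The only soft spot is the parenthetical dismissal of Young functions outside $\mathcal Y$ in the lower bound: there the identity $\Phi(\Phi^{-1}(t))=t$ you invoke can fail (only $\Phi(\Phi^{-1}(t))\leq t$ holds when $\Phi$ jumps to $\infty$), but the patch is immediate, since in that regime $\Phi^{-1}(|B_r|^{-1})$ equals the jump point $s_0$ and $\tfrac{\mu}{\lambda}s_0>s_0$ makes the integrand $+\infty$ on $A_r$, so the modular still exceeds $1$; it would be better to say this explicitly rather than wave at the ``classical $L^\infty$ case.'' Note also that in the second part no such care is needed, since $\Delta^{\prime}\subset\Delta_2$ already forces $\Phi\in\mathcal Y$, a fact the paper records after the definition of $\Delta_2$.
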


\subsection{Orlicz-Morrey Spaces}

\begin{defn}\label{OrlMor}
For a Young function $\Phi$ and $\lambda\in \R$,
we denote by $\mathcal{M}^{\Phi,\lambda}(\Rn)$ the Orlicz-Morrey space, defined as the space of all
functions $f\in L^{\Phi}_{\rm loc}(\Rn)$ with finite quasinorm
$$
  \left\| f\right\|_{\mathcal{M}^{\Phi,\lambda}}= \sup_{x\in \Rn, \; r>0}  \Phi^{-1}\big(|B(x,r)|^{-\frac{\lambda}{n}}\big) \|f\chi_{B(x,r)}\|_{L^{\Phi}}.
$$
Note that $\mathcal{M}^{\Phi,\lambda}\big|_{\lambda=0}=L^{\Phi}(\Rn)$ and $\mathcal{M}^{\Phi,\lambda}\big|_{\Phi(t)=t^p}=\mathcal{M}^{p,\lambda}(\Rn)$.
\end{defn}

\begin{lem}\label{ominf}
If $\Phi\in\Delta^{\prime}$, then $\mathcal{M}^{\Phi,n}(\Rn)=L^{\i}(\Rn)$.
\end{lem}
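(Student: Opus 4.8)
The plan is to prove that the two spaces coincide, in fact with equal (quasi)norms. The starting observation is that, by Lemma~\ref{charorlc}, $\|\chi_{B(x,r)}\|_{L^{\Phi}} = 1/\Phi^{-1}\big(|B(x,r)|^{-1}\big)$, so that specializing Definition~\ref{OrlMor} to $\lambda=n$ gives
$$
\|f\|_{\mathcal{M}^{\Phi,n}} \;=\; \sup_{x\in\Rn,\; r>0}\; \frac{\|f\chi_{B(x,r)}\|_{L^{\Phi}}}{\|\chi_{B(x,r)}\|_{L^{\Phi}}}.
$$
Thus the assertion amounts to saying that the Luxemburg-norm average of $|f|$ over shrinking balls recovers $\|f\|_{L^{\infty}}$.

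For the inclusion $L^{\infty}(\Rn)\hookrightarrow\mathcal{M}^{\Phi,n}(\Rn)$: if $f\in L^{\infty}(\Rn)$ then $|f(x)|\le\|f\|_{L^{\infty}}$ for a.e. $x$, hence $\|f\chi_{B}\|_{L^{\Phi}}\le\|f\|_{L^{\infty}}\|\chi_{B}\|_{L^{\Phi}}$ for every ball $B$; dividing by $\|\chi_{B}\|_{L^{\Phi}}$ and taking the supremum yields $\|f\|_{\mathcal{M}^{\Phi,n}}\le\|f\|_{L^{\infty}}$. This direction uses neither Lemma~\ref{charorlc} essentially nor $\Phi\in\Delta^{\prime}$.

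For the reverse inclusion, let $f\in\mathcal{M}^{\Phi,n}(\Rn)$, so that $f\in L^{\Phi}_{\rm loc}(\Rn)$ and $M:=\|f\|_{\mathcal{M}^{\Phi,n}}<\infty$. I would apply the Lebesgue differentiation theorem in Orlicz spaces (Theorem~\ref{lebesgue}) to the nonnegative function $|f|$ to obtain, for a.e. $x$,
$$
|f(x)| \;\le\; \liminf_{r\to 0+}\frac{\||f|\chi_{B(x,r)}\|_{L^{\Phi}}}{\|\chi_{B(x,r)}\|_{L^{\Phi}}} \;\le\; M .
$$
The one technical wrinkle is that Theorem~\ref{lebesgue} is stated for functions in $L^{\Phi}(\Rn)$, whereas we only know $f\in L^{\Phi}_{\rm loc}(\Rn)$; this is handled by truncation. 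For each $N\in\N$ apply the theorem to $|f|\chi_{B(0,N+1)}\in L^{\Phi}(\Rn)$, observe that for $x\in B(0,N)$ and $0<r<1$ one has $|f|\chi_{B(x,r)}=\big(|f|\chi_{B(0,N+1)}\big)\chi_{B(x,r)}$ so the small-radius quotients agree, and then let $N\to\infty$. Consequently $|f(x)|\le M$ for a.e. $x\in\Rn$, i.e. $\|f\|_{L^{\infty}}\le\|f\|_{\mathcal{M}^{\Phi,n}}$. Combining the two bounds proves $\mathcal{M}^{\Phi,n}(\Rn)=L^{\infty}(\Rn)$ with coinciding (quasi)norms.

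The role of the $\Delta^{\prime}$-condition is solely to make the conclusion of Theorem~\ref{lebesgue} available; in fact only the first, $\Delta^{\prime}$-free, half of that theorem (the $\liminf$ lower bound) is what is actually needed here, so the hypothesis could even be slightly relaxed. The only genuine obstacle in the argument is the local-versus-global integrability bookkeeping in the last step, which the truncation argument disposes of routinely; everything else is a direct computation.
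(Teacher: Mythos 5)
Your proof is correct and follows essentially the same route as the paper: the inequality $\|f\|_{\mathcal{M}^{\Phi,n}}\le\|f\|_{L^{\infty}}$ via $\|f\chi_{B}\|_{L^{\Phi}}\le\|f\|_{L^{\infty}}\|\chi_{B}\|_{L^{\Phi}}$ and Lemma~\ref{charorlc}, and the reverse inequality via the Orlicz--Lebesgue differentiation theorem (Theorem~\ref{lebesgue}). Your truncation argument tidies up a local-versus-global point that the paper's proof passes over silently, and your observation that only the $\liminf$ half of Theorem~\ref{lebesgue} is needed (so $\Delta^{\prime}$ is not really used in this direction) is accurate, but these are refinements of the same argument rather than a different one.
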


\begin{proof}
Let $f\in L^{\i}(\Rn)$, then
$$
\Phi^{-1}\big(|B(x,r)|^{-1}\big) \|f\chi_{B(x,r)}\|_{L^{\Phi}}\leq \|f\|_{L^{\i}}\Phi^{-1}\big(|B(x,r)|^{-1}\big)\|\chi_{B(x,r)}\|_{L^{\Phi}}\leq \|f\|_{L^{\i}},
$$
which implies $\left\| f\right\|_{\mathcal{M}^{\Phi,n}}\leq \|f\|_{L^{\i}}$.

Now let $f\in \mathcal{M}^{\Phi,n}(\Rn)$. Theorem \ref{lebesgue} implies that $|f(x)|\leq \left\| f\right\|_{\mathcal{M}^{\Phi,n}}$ for almost every $x\in\Rn$, which means that $\left\| f\right\|_{\mathcal{M}^{\Phi,n}}\geq \|f\|_{L^{\i}}$.
\end{proof}
In the following we denote by $\Theta$ the set of all functions equivalent to 0 on $\Rn$.

\begin{lem}\label{ntom}
Let $\Phi$ be a Young function.
If $\lambda<0$ or $\lambda>n$ and $\Phi\in\Delta^{\prime}$, then $\mathcal{M}^{\Phi,\lambda}(\Rn)=\Theta$.
\end{lem}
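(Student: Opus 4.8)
The plan is to prove the two inclusions $\Theta\subseteq\mathcal{M}^{\Phi,\lambda}(\Rn)$ and $\mathcal{M}^{\Phi,\lambda}(\Rn)\subseteq\Theta$ separately. The first is immediate: if $f$ vanishes a.e.\ then $\|f\chi_{B(x,r)}\|_{L^{\Phi}}=0$ for every ball, so $\|f\|_{\mathcal{M}^{\Phi,\lambda}}=0<\infty$. For the reverse inclusion I would fix $f\in\mathcal{M}^{\Phi,\lambda}(\Rn)$, set $M:=\|f\|_{\mathcal{M}^{\Phi,\lambda}}<\infty$, and show $f=0$ a.e., treating $\lambda<0$ and $\lambda>n$ by different mechanisms. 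The common mechanism is that $\Phi^{-1}\big(|B(x,r)|^{-\lambda/n}\big)\to\infty$ along a suitable one-sided limit of radii — as $r\to\infty$ when $\lambda<0$, as $r\to0^{+}$ when $\lambda>n$ — here exploiting that $\Phi\in\Delta^{\prime}$ forces $\Phi\in\mathcal{Y}$, so that $\Phi^{-1}$ is the genuine inverse of $\Phi$ and $\Phi^{-1}(s)\to\infty$ as $s\to\infty$.

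For $\lambda<0$ the argument is elementary. Assuming $f\notin\Theta$, I would pick a ball $B_{0}=B(x_{0},r_{0})$ on which $f$ is not a.e.\ zero, so that $c_{0}:=\|f\chi_{B_{0}}\|_{L^{\Phi}}>0$; since $r\mapsto\|f\chi_{B(x_{0},r)}\|_{L^{\Phi}}$ is nondecreasing, it stays $\ge c_{0}$ for all $r\ge r_{0}$, whence
\[
M\ \ge\ \sup_{r\ge r_{0}}\Phi^{-1}\big((v_{n}r^{n})^{-\lambda/n}\big)\,\|f\chi_{B(x_{0},r)}\|_{L^{\Phi}}\ \ge\ c_{0}\,\sup_{r\ge r_{0}}\Phi^{-1}\big(v_{n}^{-\lambda/n}r^{-\lambda}\big)=\infty,
\]
because $r^{-\lambda}\to\infty$ as $r\to\infty$. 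This contradicts $M<\infty$, so $f\in\Theta$.

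For $\lambda>n$ the obstruction is genuine: near $r=0$ one faces an $\infty\cdot0$ indeterminacy, since the prefactor $\Phi^{-1}\big(|B(x,r)|^{-\lambda/n}\big)$ blows up while $\|f\chi_{B(x,r)}\|_{L^{\Phi}}\to0$, and a crude estimate will not do. My plan is to factor $|B|^{-\lambda/n}=|B|^{-1}\cdot|B|^{-(\lambda-n)/n}$ and invoke the $\Delta^{\prime}$-consequence recorded in Remark \ref{trvorlmor}, namely $\Phi^{-1}(u)\Phi^{-1}(v)\le C\,\Phi^{-1}(uv)$, to get
\[
\Phi^{-1}\big(|B(x,r)|^{-\lambda/n}\big)\ \ge\ \tfrac{1}{C}\,\Phi^{-1}\big(|B(x,r)|^{-(\lambda-n)/n}\big)\,\Phi^{-1}\big(|B(x,r)|^{-1}\big).
\]
Feeding this into the definition of $M$ and using Lemma \ref{charorlc} (so that $\Phi^{-1}(|B|^{-1})=\|\chi_{B}\|_{L^{\Phi}}^{-1}$) should yield
\[
\Phi^{-1}\big(|B(x,r)|^{-(\lambda-n)/n}\big)\cdot\frac{\|f\chi_{B(x,r)}\|_{L^{\Phi}}}{\|\chi_{B(x,r)}\|_{L^{\Phi}}}\ \le\ C\,M\qquad\text{for all }x\in\Rn,\ r>0.
\]

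To finish, I would fix $x$ in the full-measure set where the two-sided Orlicz Lebesgue differentiation theorem (Theorem \ref{lebesgue}, whose limit form is exactly what the hypothesis $\Phi\in\Delta^{\prime}$ provides) gives $\|f\chi_{B(x,r)}\|_{L^{\Phi}}/\|\chi_{B(x,r)}\|_{L^{\Phi}}\to|f(x)|$ as $r\to0^{+}$, and note that $|B(x,r)|^{-(\lambda-n)/n}\to\infty$ as $r\to0^{+}$ because $\lambda-n>0$, hence $\Phi^{-1}\big(|B(x,r)|^{-(\lambda-n)/n}\big)\to\infty$. If $|f(x)|>0$ the left-hand side of the last display would tend to $\infty$, contradicting $CM<\infty$; therefore $|f(x)|=0$ for a.e.\ $x$, i.e.\ $f\in\Theta$. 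The step I expect to be the main obstacle is precisely this $\lambda>n$ case, where one must convert finiteness of the quasinorm at small scales into the pointwise bound $|f|=0$: it works only because $\Delta^{\prime}$ does double duty — submultiplicativity of $\Phi^{-1}$ via Remark \ref{trvorlmor} to peel off the extra power $|B|^{-(\lambda-n)/n}$, and the genuine-limit form of the differentiation theorem to read off the value $f(x)$ from the averages.
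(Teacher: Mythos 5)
Your proposal is correct and follows essentially the same route as the paper: for $\lambda<0$ you let $r\to\infty$ and use $\Phi^{-1}(s)\to\infty$, and for $\lambda>n$ you use the same factorization $|B|^{-\lambda/n}=|B|^{-1}\cdot|B|^{1-\lambda/n}$ together with the $\Delta^{\prime}$-submultiplicativity of $\Phi^{-1}$ (Remark \ref{trvorlmor}), Lemma \ref{charorlc}, and the Orlicz Lebesgue differentiation theorem (Theorem \ref{lebesgue}) as $r\to0^{+}$. Phrasing the argument by contradiction rather than directly, and noting explicitly that $\Delta^{\prime}$ guarantees $\Phi\in\mathcal{Y}$, are only cosmetic differences from the paper's proof.
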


\begin{proof}
First let $\lambda<0$ and $f\in \mathcal{M}^{\Phi,\lambda}(\Rn)$. For all $x\in\Rn$ and $r>0$ we have
$$
\|f\chi_{B(x,r)}\|_{L^{\Phi}}\leq \frac{\left\| f\right\|_{\mathcal{M}^{\Phi,\lambda}}}{\Phi^{-1}\big(|B(x,r)|^{-\frac{\lambda}{n}}\big)},
$$
which implies that $\|f\|_{L^{\Phi}}=\lim_{r\to\i}\|f\chi_{B(x,r)}\|_{L^{\Phi}}=0\Rightarrow f(x)=0,$ for almost every $x\in\Rn$, since $\lim\limits_{r\to\i}\Phi^{-1}(r)=\i$.

Now let $\lambda>n$ and $f\in \mathcal{M}^{\Phi,\lambda}(\Rn)$. For all $x\in\Rn$ and $r>0$ we have from Remark \ref{trvorlmor} and Theorem \ref{lebesgue}
$$
\frac{\|f\chi_{B(x,r)}\|_{L^{\Phi}}}{\|\chi_{B(x,r)}\|_{L^{\Phi}}}\leq \frac{\Phi^{-1}\big(|B(x,r)|^{-1}\big)}{\Phi^{-1}\big(|B(x,r)|^{-1}|B(x,r)|^{1-\frac{\lambda}{n}}\big)}\left\| f\right\|_{\mathcal{M}^{\Phi,\lambda}}\leq \frac{\left\| f\right\|_{\mathcal{M}^{\Phi,\lambda}}}{\Phi^{-1}\big(|B(x,r)|^{1-\frac{\lambda}{n}}\big)},
$$
which implies that $|f(x)|=0,$ for almost every $x\in\Rn$.
\end{proof}

\begin{rem}
In the case $\Phi(t)=t^p$ for $1\le p<\i$ from Lemmas \ref{ominf} and \ref{ntom} we get the following well known results: $\mathcal{M}^{p,n}(\Rn)=L^{\i}(\Rn)$ and $\mathcal{M}^{p,\lambda}(\Rn)= \Theta$ for $\lambda < 0$ or $\lambda > n$.
\end{rem}

\subsection{Generalized Orlicz-Morrey Spaces}

Various versions of generalized Orlicz-Morrey spaces were introduced in \cite{Nakai0}, \cite{SawSugTan} and \cite{DerGulSam}.
We used the definition of \cite{DerGulSam} which runs as follows.

\begin{defn}\label{genOrlMor}
Let $\varphi(x,r)$ be a positive measurable function on $\Rn \times (0,\infty)$ and $\Phi$ any Young function.
We denote by $\mathcal{M}^{\Phi,\varphi}(\Rn)$ the generalized Orlicz-Morrey space, the space of all
functions $f\in L^{\Phi}_{\rm loc}(\Rn)$ for which
$$
\|f\|_{\mathcal{M}^{\Phi,\varphi}} = \sup\limits_{x\in\Rn, r>0}
\varphi(x,r)^{-1} \Phi^{-1}(|B(x,r)|^{-1}) \|f\|_{L^{\Phi}(B(x,r))}<\infty.
$$
\end{defn}

In the case $\varphi(x,r)=\frac{\Phi^{-1}\big(|B(x,r)|^{-1}\big)}{\Phi^{-1}\big(|B(x,r)|^{-\lambda/n}\big)}$, we get the Orlicz-Morrey space $\mathcal{M}^{\Phi,\lambda}(\Rn)$
from generalized Orlicz-Morrey space $\mathcal{M}^{\Phi,\vi}(\Rn)$ .

\begin{lem}\label{Lemma1Orl} Let $\Phi$ be a Young function and $ \varphi $ be a positive measurable function on $\Rn\times (0,\infty)$.
\begin{itemize}
\item[(i)] If
\begin{align}\label{L11Orl}
\sup_{ t<r<\infty }\frac{\Phi^{-1}(|B(x,r)|^{-1})}{\varphi(x,r)}=\infty\quad\textrm{ for some } t>0~\textrm{ and for all } x\in\Rn,
\end{align}
then $ \mathcal{M}^{\Phi,\varphi}(\Rn)=\Theta$.
\item[(ii)] If $\Phi\in\Delta^{\prime}$ and
\begin{align}\label{L12Orl}
\sup_{ 0<r<\tau} \varphi(x,r)^{-1}=\infty\quad\textrm{ for some } \tau>0~\textrm{ and for all } x\in\Rn,
\end{align}
then $ \mathcal{M}^{\Phi,\varphi}(\Rn)=\Theta $.
\end{itemize}
\end{lem}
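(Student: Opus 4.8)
The two parts are proved by a similar strategy: fix an arbitrary $f\in\mathcal{M}^{\Phi,\varphi}(\Rn)$ and show that $\|f\chi_{B(x,r)}\|_{L^{\Phi}}=0$ for a suitable ball at each point $x$, then invoke the Lebesgue differentiation theorem in Orlicz spaces (Theorem~\ref{lebesgue}) to conclude $f(x)=0$ a.e. For part~(i), the defining inequality
\[
\Phi^{-1}(|B(x,r)|^{-1})\,\|f\chi_{B(x,r)}\|_{L^{\Phi}}\le \varphi(x,r)\,\|f\|_{\mathcal{M}^{\Phi,\varphi}}
\]
rearranges to
\[
\|f\chi_{B(x,r)}\|_{L^{\Phi}}\le \frac{\varphi(x,r)}{\Phi^{-1}(|B(x,r)|^{-1})}\,\|f\|_{\mathcal{M}^{\Phi,\varphi}} .
\]
Fix the $t>0$ from \eqref{L11Orl}. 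For every $r$ with $t<r<\infty$ the $L^\Phi$-norm over $B(x,t)$ is dominated by that over $B(x,r)$, so
\[
\|f\chi_{B(x,t)}\|_{L^{\Phi}}\le \inf_{t<r<\infty}\frac{\varphi(x,r)}{\Phi^{-1}(|B(x,r)|^{-1})}\,\|f\|_{\mathcal{M}^{\Phi,\varphi}} = 0,
\]
because the infimum of the reciprocal of the supremum in \eqref{L11Orl} is zero. Hence $f=0$ a.e.\ on $B(x,t)$; since $x$ is arbitrary (and $t$ may depend on $x$) this gives $f\in\Theta$. The only delicate point here is that $t$ is allowed to depend on $x$, so one should phrase the conclusion pointwise: for a.e.\ $x$, picking the $t=t(x)$ supplied by \eqref{L11Orl} shows $x$ is a point where $f$ vanishes locally, hence $f(x)=0$.

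For part~(ii) the roles of large and small $r$ swap, and we must also use $\Phi\in\Delta'$ in order to compare $\|f\chi_{B(x,r)}\|_{L^{\Phi}}/\|\chi_{B(x,r)}\|_{L^{\Phi}}$ with $\|f\|_{\mathcal{M}^{\Phi,\varphi}}$. By Lemma~\ref{charorlc}, $\|\chi_{B(x,r)}\|_{L^{\Phi}}=1/\Phi^{-1}(|B(x,r)|^{-1})$, so the defining inequality becomes
\[
\frac{\|f\chi_{B(x,r)}\|_{L^{\Phi}}}{\|\chi_{B(x,r)}\|_{L^{\Phi}}}\le \varphi(x,r)\,\|f\|_{\mathcal{M}^{\Phi,\varphi}} .
\]
Fix the $\tau>0$ from \eqref{L12Orl}. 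Taking the infimum over $0<r<\tau$, the right-hand side has infimum $0$ by \eqref{L12Orl} (the supremum of $\varphi(x,r)^{-1}$ being infinite means the infimum of $\varphi(x,r)$ over $0<r<\tau$ is zero). Therefore
\[
\liminf_{r\to 0+}\frac{\|f\chi_{B(x,r)}\|_{L^{\Phi}}}{\|\chi_{B(x,r)}\|_{L^{\Phi}}}=0 ,
\]
and since $\Phi\in\Delta'$, the second assertion of Theorem~\ref{lebesgue} upgrades this $\liminf$ to a genuine limit equal to $|f(x)|$ for a.e.\ $x$; hence $f(x)=0$ a.e.\ and $f\in\Theta$. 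The main obstacle in this half is simply being careful that the quantity whose $\liminf$ we control is exactly the one Theorem~\ref{lebesgue} evaluates — that is why we first rewrite everything in terms of the normalized averages $\|f\chi_{B(x,r)}\|_{L^{\Phi}}/\|\chi_{B(x,r)}\|_{L^{\Phi}}$ using Lemma~\ref{charorlc} — and that the $\Delta'$ hypothesis is genuinely needed here (the plain $\liminf$ version of Theorem~\ref{lebesgue} would give a lower bound $|f(x)|$, not an upper bound, so it alone would not force $f(x)=0$). Once the normalization is set up, the rest is the same infimum-is-zero argument as in part~(i).
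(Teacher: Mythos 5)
Your part (i) is sound and is essentially the paper's own argument read contrapositively: the paper shows that $f\not\equiv 0$ forces $\|f\|_{\mathcal{M}^{\Phi,\varphi}}=\infty$ using the same key inequality, while you show directly that a finite norm forces $\|f\chi_{B(x,t)}\|_{L^{\Phi}}=0$; the point that $t$ may depend on $x$ is harmless (a countable subcover of $\{B(x,t(x))\}_{x\in\Rn}$ settles it).

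Part (ii), however, has a genuine gap. From \eqref{L12Orl} you only know $\inf_{0<r<\tau}\varphi(x,r)=0$, i.e.\ that $\varphi(x,\cdot)^{-1}$ blows up \emph{somewhere} in $(0,\tau)$; from this you conclude $\liminf_{r\to 0+}\|f\chi_{B(x,r)}\|_{L^{\Phi}}/\|\chi_{B(x,r)}\|_{L^{\Phi}}=0$. That step conflates the infimum over the whole interval with the limit inferior at $0$: the radii along which $\varphi(x,r)\to 0$ may be bounded away from $0$ (take, e.g., $\varphi(x,r)=|r-\tau/2|$ on $(0,\tau)$ and $\varphi\equiv 1$ elsewhere), and then your pointwise bound gives no information as $r\to 0+$, so Theorem \ref{lebesgue} cannot be applied and the argument stops, even though the conclusion of the lemma is still true in such examples. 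This is exactly why the paper's proof splits into two cases: Case 1, where $\sup_{0<r<t}\varphi(x,r)^{-1}=\infty$ for \emph{every} $t>0$ (blow-up at arbitrarily small radii), which is the only situation your differentiation argument covers; and Case 2, where $\sup_{0<r<s}\varphi(x,r)^{-1}<\infty$ for some $s\in(0,\tau)$, so the blow-up occurs on $[s,\tau)$ and, since $\Phi^{-1}(|B(x,r)|^{-1})\geq\Phi^{-1}(|B(x,\tau)|^{-1})$ for $r<\tau$, hypothesis \eqref{L11Orl} of part (i) holds and one concludes from (i). Your proof can be repaired by adding the same dichotomy: if the minimizing radii stay above some $\delta>0$, then $\|f\chi_{B(x,\delta)}\|_{L^{\Phi}}\,\Phi^{-1}(|B(x,\tau)|^{-1})$ bounds the normalized quantity from below along them, forcing $f=0$ a.e.\ near $x$. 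A secondary remark: your parenthetical claim that $\Delta^{\prime}$ is what makes your step work is not accurate --- if the liminf at $0$ really were $0$, the liminf half of Theorem \ref{lebesgue} (which needs no $\Delta^{\prime}$) already gives $|f(x)|\leq 0$ a.e.; in the paper $\Delta^{\prime}$ is simply the hypothesis under which the differentiation theorem is quoted.
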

\begin{proof} (i) Let \eqref{L11Orl} be satisfied and $ f $ be not equivalent to zero. Then $ \sup_{x\in\Rn}\left\| f\right\|_{L^\Phi(B(x,t))}>0 $, hence
\begin{align*}
\left\| f\right\|_{\mathcal{M}^{\Phi,\varphi}}&\geq\sup_{x\in\Rn}\sup_{ t<r<\infty }\varphi(x,r)^{-1} \Phi^{-1}(|B(x,r)|^{-1}) \left\| f\right\|_{L^\Phi(B(x,r))}
\\
&\geq \sup_{x\in\Rn}\left\| f\right\|_{L^\Phi(B(x,t))}\sup_{ t<r<\infty }\varphi(x,r)^{-1}\Phi^{-1}(|B(x,r)|^{-1}).
\end{align*}
Therefore $ \left\| f\right\|_{\mathcal{M}^{\Phi,\varphi}}=\infty$.\\
(ii) Let $ f\in \mathcal{M}^{\Phi,\varphi}(\Rn) $ and \eqref{L12Orl} be satisfied. Then there are two possibilities:

\item Case 1: $\sup_{ 0<r<t }\varphi(x,r)^{-1}=\infty$ for all $t>0$.

\item Case 2: $\sup_{ 0<r<s }\varphi(x,r)^{-1}<\infty$ for some $s\in(0,\tau)$.

For Case 1, by Theorem \ref{lebesgue}, for almost all $x\in\Rn$,
\begin{align}\label{LebDifOrl}
\lim_{r\to 0+}\frac{\|f\chi_{B(x,r)}\|_{L^{\Phi}}}{\|\chi_{B(x,r)}\|_{L^{\Phi}}}=|f(x)|.
\end{align}
We claim that $ f(x)=0 $ for all those $ x$. Indeed, fix $x$ and assume $ |f(x)|>0$. Then by Lemma \ref{charorlc} and \eqref{LebDifOrl} there exists $ t_0>0 $ such that
\begin{align*}
\Phi^{-1}\big(|B(x,r)|^{-1}\big)\left\| f\right\|_{L^\Phi(B(x,r))}\geq \frac{|f(x)|}{2}
\end{align*}
for all $ 0<r\leq t_0$. Consequently,
\begin{align*}
\left\| f\right\|_{\mathcal{M}^{\Phi,\varphi}} & \geq \sup_{0<r<t_0}\varphi(x,r)^{-1} \Phi^{-1}\big(|B(x,r)|^{-1}\big)
 \left\| f\right\|_{L^\Phi(B(x,r))}
 \\
 & \geq \frac{|f(x)|}{2}\sup_{0<r<t_0}\varphi(x,r)^{-1}.
\end{align*}
Hence $ \left\| f\right\|_{\mathcal{M}^{\Phi,\varphi}}=\infty$, so $ f\notin \mathcal{M}^{\Phi,\varphi}(\Rn) $ and we have arrived at a contradiction.

Note that Case 2 implies that $\sup_{s<r<\tau}\varphi(x,r)^{-1}=\infty$, hence
\begin{align*}
\sup_{s<r<\infty}\varphi(x,r)^{-1}\Phi^{-1}(|B(x,r)|^{-1})&\geq\sup_{s<r<\tau}\varphi(x,r)^{-1}\Phi^{-1}(|B(x,r)|^{-1})
\\
&\geq \Phi^{-1}(|B(x,\tau)|^{-1})\sup_{s<r<\tau}\varphi(x,r)^{-1}=\infty,
\end{align*}
which is the case in (i).
\end{proof}

\begin{rem}\label{nontrivorlmor}
Let $\Phi$ be a Young function. We denote by $\Omega_{\Phi}$ the sets of all positive measurable functions $\varphi$ on $\Rn\times (0,\infty)$ such that for all $t>0$,
$$
\sup_{x\in\Rn} \Big\|\frac{\Phi^{-1}(|B(x,r)|^{-1})}{\varphi(x,r)} \Big\|_{L^\infty(t,\infty)}<\infty,
$$
and
$$
\sup_{x\in\Rn}\Big\|\varphi(x,r)^{-1} \Big\|_{L^\infty(0, t)}<\infty,
$$
respectively. In what follows, keeping in mind Lemma \ref{Lemma1Orl}, we always assume that $\varphi\in\Omega_{\Phi}$ and $\Phi\in\Delta^{\prime}$.
\end{rem}

The following theorem and lemma play a key role in our main results.
\begin{thm}\label{thm4.4.}\cite{DerGulSam}
Let $\Phi$ be a Young function, the functions $\varphi \in \Omega_{\Phi}$ and $\Phi \in \Delta^{\prime}$ satisfy the condition
\begin{equation}\label{bounmax}
\sup_{r<t<\infty} \Phi^{-1}\big(|B(x,t)|^{-1}\big) \es_{t<s<\infty}\frac{\varphi(x,s)}{\Phi^{-1}\big(|B(x,s)|^{-1}\big)} \le C \, \varphi(x,r),
\end{equation}
where $C$ does not depend on $x$ and $r$. Then the maximal operator $M$ is bounded from $\mathcal{M}^{\Phi,\varphi}(\Rn)$ to $\mathcal{M}^{\Phi,\varphi}(\Rn)$ for $\Phi \in \nabla_2$.
\end{thm}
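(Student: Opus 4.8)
The plan is to reduce the asserted global boundedness to a single pointwise-over-balls (``local'') estimate of Guliyev type, and then feed that estimate into hypothesis \eqref{bounmax}. Concretely, I would first prove that for every ball $B(x_0,r)$ and every $f\in L^{\Phi}_{\rm loc}(\Rn)$,
$$
\|Mf\|_{L^{\Phi}(B(x_0,r))} \lesssim \frac{1}{\Phi^{-1}(|B(x_0,r)|^{-1})}\, \sup_{t>r}\, \Phi^{-1}(|B(x_0,t)|^{-1})\,\|f\|_{L^{\Phi}(B(x_0,t))},
$$
valid whenever $\Phi\in\nabla_2$, and then deduce the theorem by a short computation using the definition of $\|\cdot\|_{\mathcal{M}^{\Phi,\varphi}}$ together with \eqref{bounmax}.

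For the local estimate I would split $f=f_1+f_2$ with $f_1=f\chi_{B(x_0,2r)}$ and $f_2=f\chi_{\Rn\setminus B(x_0,2r)}$. The near part is handled by the boundedness of $M$ on $L^{\Phi}(\Rn)$, which holds when $\Phi\in\nabla_2$ --- this is precisely where that hypothesis is used --- giving $\|Mf_1\|_{L^{\Phi}(B(x_0,r))}\le\|Mf_1\|_{L^{\Phi}}\lesssim\|f\|_{L^{\Phi}(B(x_0,2r))}$; since $\Phi^{-1}(2^{-n}s)\approx\Phi^{-1}(s)$ (immediate from \eqref{cvccv}), this is dominated by the right-hand side above (take $t=2r$ in the supremum). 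For the far part, the key geometric observation is that if $y\in B(x_0,r)$ and $B(y,t)$ meets $\Rn\setminus B(x_0,2r)$ then necessarily $t>r$ and $B(y,t)\subset B(x_0,2t)$, whence $Mf_2(y)\lesssim\sup_{t>r}|B(x_0,t)|^{-1}\|f\|_{L^{1}(B(x_0,2t))}$; applying Lemma \ref{lemHold} and absorbing the dilation constants via $\Phi^{-1}(2^{-n}s)\approx\Phi^{-1}(s)$ bounds $Mf_2(y)$ by $C\sup_{t>r}\Phi^{-1}(|B(x_0,t)|^{-1})\|f\|_{L^{\Phi}(B(x_0,t))}$, which is constant in $y$; multiplying by $\chi_{B(x_0,r)}$, taking the $L^{\Phi}$ norm and invoking Lemma \ref{charorlc} produces the factor $1/\Phi^{-1}(|B(x_0,r)|^{-1})$. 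Adding the two contributions gives the local estimate.

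To conclude, fix $x_0$ and $r$. Since $s\mapsto\|f\|_{L^{\Phi}(B(x_0,s))}$ is nondecreasing, the definition of the $\mathcal{M}^{\Phi,\varphi}$-norm yields, for each $t>r$,
$$
\|f\|_{L^{\Phi}(B(x_0,t))}\le \es_{t<s<\infty}\frac{\varphi(x_0,s)}{\Phi^{-1}(|B(x_0,s)|^{-1})}\,\|f\|_{\mathcal{M}^{\Phi,\varphi}};
$$
inserting this into the local estimate and using \eqref{bounmax} gives
$$
\|Mf\|_{L^{\Phi}(B(x_0,r))}\lesssim \frac{\|f\|_{\mathcal{M}^{\Phi,\varphi}}}{\Phi^{-1}(|B(x_0,r)|^{-1})}\sup_{r<t<\infty}\Phi^{-1}(|B(x_0,t)|^{-1})\es_{t<s<\infty}\frac{\varphi(x_0,s)}{\Phi^{-1}(|B(x_0,s)|^{-1})}\lesssim \frac{\varphi(x_0,r)}{\Phi^{-1}(|B(x_0,r)|^{-1})}\|f\|_{\mathcal{M}^{\Phi,\varphi}}.
$$
Multiplying by $\varphi(x_0,r)^{-1}\Phi^{-1}(|B(x_0,r)|^{-1})$ and taking the supremum over $x_0\in\Rn$ and $r>0$ gives $\|Mf\|_{\mathcal{M}^{\Phi,\varphi}}\lesssim\|f\|_{\mathcal{M}^{\Phi,\varphi}}$, as desired.

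I expect the main obstacle to be the local estimate, specifically the treatment of the tail $Mf_2$: one must arrange the ball comparisons so that the weight $\Phi^{-1}(|B(x_0,t)|^{-1})$ appears in exactly the shape occurring in \eqref{bounmax}, and keep careful track of the dilation constants through $\Phi^{-1}(2^{-n}s)\approx\Phi^{-1}(s)$ and Lemmas \ref{lemHold}--\ref{charorlc}; the near term forces the use of the $L^{\Phi}$-boundedness of $M$, i.e.\ of $\Phi\in\nabla_2$. Once the local estimate is in hand, the passage to the Morrey norm is essentially formal, resting only on the monotonicity of $\|f\|_{L^{\Phi}(B(x_0,\cdot))}$ and the structural hypothesis \eqref{bounmax}.
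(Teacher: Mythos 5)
Your proposal is correct and follows essentially the same route as the source the paper cites for this theorem (\cite{DerGulSam}, quoted here without proof) and as the paper's own proof of Theorem \ref{AdGulFrMaxOrlMor}: the Guliyev-type local estimate $\|Mf\|_{L^{\Phi}(B(x_0,r))}\lesssim \Phi^{-1}(|B(x_0,r)|^{-1})^{-1}\sup_{t>r}\Phi^{-1}(|B(x_0,t)|^{-1})\|f\|_{L^{\Phi}(B(x_0,t))}$ obtained from the $f=f_1+f_2$ splitting, with $\Phi\in\nabla_2$ giving the $L^{\Phi}$-boundedness of $M$ for the near part, the inclusion $B(y,t)\subset B(x_0,2t)$ together with Lemmas \ref{lemHold} and \ref{charorlc} for the tail, and then the monotonicity-of-radius/essential-infimum step feeding into \eqref{bounmax}. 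No gaps; the argument is sound as written.
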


A function $\varphi:(0,\infty) \to (0,\infty)$ is said to be almost increasing (resp.
almost decreasing) if there exists a constant $C > 0$ such that
$$
\varphi(r)\leq C \varphi(s)\qquad (\text{resp. }\varphi(r)\geq C \varphi(s))\quad \text{for  } r\leq s.
$$
For a Young function $\Phi$, we denote by ${\mathcal{G}}_{\Phi}$ the set of all almost decreasing functions $\varphi:(0,\infty) \to (0,\infty)$
such that $t\in(0,\infty) \mapsto \frac{\varphi(t)}{\Phi^{-1}(t^{-n})}$ is almost increasing.

\begin{lem}\label{charOrlMor}\cite{GulDerPot}
Let $B_0:=B(x_0,r_0)$. If $\varphi\in{\mathcal{G}}_{\Phi}$, then there exist $C>0$ such that
$$
\frac{1}{\varphi(r_0)}\leq \|\chi_{B_0}\|_{\mathcal{M}^{\Phi,\varphi}}\leq \frac{C}{\varphi(r_0)}.
$$
\end{lem}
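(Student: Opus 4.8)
The plan is to estimate $\|\chi_{B_0}\|_{\mathcal{M}^{\Phi,\varphi}}$ directly from the definition, splitting the supremum over balls $B(x,r)$ according to how $B(x,r)$ meets $B_0$. By definition,
$$
\|\chi_{B_0}\|_{\mathcal{M}^{\Phi,\varphi}} = \sup_{x\in\Rn,\ r>0} \varphi(r)^{-1}\,\Phi^{-1}\big(|B(x,r)|^{-1}\big)\,\|\chi_{B_0}\|_{L^\Phi(B(x,r))},
$$
and by Lemma \ref{charorlc} we have $\|\chi_{B_0}\|_{L^\Phi(B(x,r))} = \|\chi_{B(x,r)\cap B_0}\|_{L^\Phi} = \Phi^{-1}\big(|B(x,r)\cap B_0|^{-1}\big)^{-1}$ whenever the intersection has positive measure (and is $0$ otherwise). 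So we must control
$$
\sup_{x,r}\ \varphi(r)^{-1}\,\frac{\Phi^{-1}\big(|B(x,r)|^{-1}\big)}{\Phi^{-1}\big(|B(x,r)\cap B_0|^{-1}\big)}.
$$

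For the lower bound, I would simply take $x=x_0$ and $r=r_0$: then $B(x,r)\cap B_0 = B_0$, the ratio of $\Phi^{-1}$ factors is $1$, and we get $\|\chi_{B_0}\|_{\mathcal{M}^{\Phi,\varphi}} \ge \varphi(r_0)^{-1}$. For the upper bound I would distinguish two regimes. If $r \le r_0$, then using that $\Phi^{-1}$ is increasing and $|B(x,r)\cap B_0| \le |B(x,r)|$, the $\Phi^{-1}$-ratio is $\le 1$, so the contribution is $\le \sup_{0<r\le r_0}\varphi(r)^{-1}$, which by the almost decreasing property of $\varphi \in \mathcal{G}_\Phi$ is $\lesssim \varphi(r_0)^{-1}$. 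If $r > r_0$, then $|B(x,r)\cap B_0| \le |B_0| = v_n r_0^n$, so $\Phi^{-1}\big(|B(x,r)\cap B_0|^{-1}\big) \ge \Phi^{-1}\big((v_n r_0^n)^{-1}\big) \approx \Phi^{-1}(r_0^{-n})$ (using $\Phi^{-1}(Cr)\approx\Phi^{-1}(r)$), and $\Phi^{-1}\big(|B(x,r)|^{-1}\big) \le \Phi^{-1}(r^{-n})$ up to a constant; hence the contribution is $\lesssim \sup_{r>r_0}\varphi(r)^{-1}\,\dfrac{\Phi^{-1}(r^{-n})}{\Phi^{-1}(r_0^{-n})}$. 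Now I invoke that $t\mapsto \varphi(t)/\Phi^{-1}(t^{-n})$ is almost increasing, i.e. for $r \ge r_0$ one has $\varphi(r_0)/\Phi^{-1}(r_0^{-n}) \lesssim \varphi(r)/\Phi^{-1}(r^{-n})$, which rearranges precisely to $\varphi(r)^{-1}\Phi^{-1}(r^{-n}) \lesssim \varphi(r_0)^{-1}\Phi^{-1}(r_0^{-n})$, giving the bound $\lesssim \varphi(r_0)^{-1}$. Combining the two regimes yields $\|\chi_{B_0}\|_{\mathcal{M}^{\Phi,\varphi}} \le C\varphi(r_0)^{-1}$.

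The main point to be careful about is the bookkeeping with the constants coming from $|B(x,r)| = v_n r^n$ and the equivalences $\Phi^{-1}(Cr)\approx\Phi^{-1}(r)$ from \eqref{cvccv}: the almost increasing/almost decreasing hypotheses in the definition of $\mathcal{G}_\Phi$ are stated for the variables $t$ and $t^{-n}$, so one has to translate the ball measures into this normalization consistently. A minor subtlety is handling balls $B(x,r)$ with $B(x,r)\cap B_0$ of measure zero (they contribute $0$ and can be discarded) and making sure the supremum defining the norm is actually attained in the claimed range; neither causes real difficulty. I do not expect any genuine obstacle here — the lemma is essentially a direct computation once the two-regime split and the two monotonicity properties of $\varphi$ are used.
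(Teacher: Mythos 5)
Your argument is correct: the lower bound follows by testing the norm on $B(x,r)=B_0$ together with Lemma \ref{charorlc}, and the upper bound follows from the two-regime split, using the almost decreasing property of $\varphi$ for $r\le r_0$ and the almost increasing property of $t\mapsto\varphi(t)/\Phi^{-1}(t^{-n})$ (plus $\Phi^{-1}(Ct)\approx\Phi^{-1}(t)$ to absorb the factor $v_n$) for $r>r_0$. Note that the paper itself gives no proof of this lemma — it is quoted from \cite{GulDerPot} — so there is nothing internal to compare against; your computation is the natural and standard one for such characterizations, and I see no gap in it.
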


\section{Adams type results for $M_{\alpha}$ in $\mathcal{M}^{\Phi,\varphi}$}
For proving our main results, we need the following estimate.
\begin{lem}\label{estFrMax}
If $B_0:=B(x_0,r_0)$, then $r_0^{\alpha}\leq C M_{\a} \chi_{B_0}(x)$ for every $x\in B_0$.
\end{lem}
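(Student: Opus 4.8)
The plan is to prove the pointwise estimate $r_0^{\alpha} \leq C\, M_{\alpha}\chi_{B_0}(x)$ for $x \in B_0 = B(x_0, r_0)$ directly from the definition of $M_{\alpha}$, by choosing a convenient test radius in the supremum. Recall that
$$
M_{\alpha}\chi_{B_0}(x) = \sup_{t>0} |B(x,t)|^{-1+\frac{\alpha}{n}} \int_{B(x,t)} \chi_{B_0}(y)\, dy = \sup_{t>0} |B(x,t)|^{-1+\frac{\alpha}{n}} |B(x,t) \cap B_0|.
$$
The key geometric observation is that if $x \in B_0$, then the ball $B(x, 2r_0)$ contains $B_0$ entirely: indeed, for $y \in B_0$ we have $|x - y| \leq |x - x_0| + |x_0 - y| < r_0 + r_0 = 2r_0$. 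Hence $B(x, 2r_0) \cap B_0 = B_0$, and so $|B(x,2r_0) \cap B_0| = |B_0| = v_n r_0^n$.

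First I would take $t = 2r_0$ in the supremum defining $M_{\alpha}\chi_{B_0}(x)$. This gives
$$
M_{\alpha}\chi_{B_0}(x) \geq |B(x, 2r_0)|^{-1+\frac{\alpha}{n}} |B(x, 2r_0) \cap B_0| = \big(v_n (2r_0)^n\big)^{-1+\frac{\alpha}{n}} \cdot v_n r_0^n.
$$
Then I would simplify the right-hand side: writing $|B(x,2r_0)| = v_n 2^n r_0^n$, the product equals $(v_n 2^n)^{-1+\frac{\alpha}{n}} r_0^{n(-1+\frac{\alpha}{n})} \cdot v_n r_0^n = (v_n 2^n)^{-1+\frac{\alpha}{n}} v_n \cdot r_0^{-n + \alpha + n} = c_{n,\alpha}\, r_0^{\alpha}$, where $c_{n,\alpha} = (v_n 2^n)^{\frac{\alpha}{n}-1} v_n$ is a positive constant depending only on $n$ and $\alpha$. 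Taking $C = c_{n,\alpha}^{-1}$ yields $r_0^{\alpha} \leq C\, M_{\alpha}\chi_{B_0}(x)$ for all $x \in B_0$, as desired.

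There is essentially no obstacle here: the only point requiring (minor) care is the inclusion $B_0 \subset B(x, 2r_0)$ for $x \in B_0$, which is an immediate consequence of the triangle inequality, and the bookkeeping of the constant $v_n$ appearing in $|B(x,t)| = v_n t^n$. One could equally well use any fixed multiple $t = \kappa r_0$ with $\kappa \geq 2$; the choice $\kappa = 2$ is simply the cleanest. I would present this as a two-line computation.
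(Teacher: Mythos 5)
Your proof is correct. It achieves the same goal as the paper's argument but by a slightly more direct route: the paper first passes to the uncentered fractional maximal operator $\mathrm{M}_{\a}$ via the pointwise comparison $\mathrm{M}_{\a}f(x)\le 2^{n-\a}M_{\a}f(x)$ and then tests the uncentered supremum with the ball $B_0$ itself (which contains $x$), whereas you stay with the centered operator and test with the radius $t=2r_0$, using the triangle-inequality inclusion $B_0\subset B(x,2r_0)$. The two arguments are geometrically the same and even produce the same constant up to normalization (your factor $(2^n)^{1-\frac{\a}{n}}=2^{n-\a}$ is exactly the loss the paper incurs through the centered/uncentered comparison); what your version buys is self-containedness, since it does not invoke the auxiliary inequality for $\mathrm{M}_{\a}$, while the paper's version keeps the computation on $B_0$ itself at the cost of citing that standard comparison.
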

\begin{proof}
It is well known that
\begin{equation}\label{poicomcumax}
\mathrm{M}_{\a}f(x)\leq 2^{n-\a}M_{\a}f(x),
\end{equation}
where $\mathrm{M}_{\a}(f)(x)=\sup\limits_{B\ni x}|B|^{-1+ \frac{\a}{n}}\int _{B} |f(y)|dy$.

Now let $x\in B_0$. By using \eqref{poicomcumax}, we get
\begin{align*}
M_{\a} \chi_{B_0}(x)&\geq C \mathrm{M}_{\a} \chi_{B_0}(x)\geq C\sup\limits_{B\ni x}|B|^{-1+ \frac{\a}{n}}|B \cap B_0|
\\
& \geq C |B_0|^{-1+ \frac{\a}{n}}|B_0 \cap B_0|= C r_0^{\a}.
\end{align*}
\end{proof}

\begin{thm}\label{AdGulFrMaxOrlMor}
Let $\Phi\in \Delta^{\prime}\cap \nabla_2$ and $0< \a<n$. Let $\varphi \in \Omega_{\Phi}$ satisfy the conditions \eqref{bounmax} and
\begin{equation}\label{eq3.6.Vfrmax}
r^{\alpha}\varphi(x,r) + \sup_{r<t<\infty} t^{\alpha}\, \varphi(x,t) \le C
\varphi(x,r)^{\beta},
\end{equation}
for some $\beta\in(0,1)$ and for every $x\in\Rn$ and $r>0$.
Define $\eta(x,r)\equiv\varphi(x,r)^{\beta}$ and $\Psi(r)\equiv\Phi(r^{1/\beta})$. Then the operator $M_\a$ is bounded from $\mathcal{M}^{\Phi,\varphi}(\Rn)$ to $\mathcal{M}^{\Psi,\eta}(\Rn)$.
\end{thm}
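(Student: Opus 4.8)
The plan is to imitate the classical Adams trick, localizing the fractional maximal operator around an arbitrary ball $B(x,r)$ and splitting the defining average into a ``near'' part (the integral over $B(x,2t)$) and a ``far'' part, then using the Hedberg-type pointwise estimate to interpolate between the local $L^1$-average of $f$ and its $\mathcal{M}^{\Phi,\varphi}$-norm. More precisely, first I would fix $x\in\Rn$, $r>0$, and $f\in\mathcal{M}^{\Phi,\varphi}(\Rn)$ with $\|f\|_{\mathcal{M}^{\Phi,\varphi}}=1$ (by homogeneity), and aim to bound $\Psi^{-1}(|B(x,r)|^{-1})\,\|M_\a f\|_{L^\Psi(B(x,r))}$ by a constant times $\eta(x,r)$. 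The key observation is that $\Psi^{-1}(s)=\bigl(\Phi^{-1}(s)\bigr)^{\beta}$, so $\Psi^{-1}(|B|^{-1})=\bigl(\Phi^{-1}(|B|^{-1})\bigr)^{\beta}$, and — using $\Phi\in\Delta'$, hence $\Phi^{-1}$ submultiplicative up to a constant in the sense of Remark~\ref{trvorlmor} — one gets $\|g^{1/\beta}\|_{L^\Psi(B)}=\|g\|_{L^\Phi(B)}^{1/\beta}$ for the norm identity that converts the target $L^\Psi$-estimate for $M_\a f$ into an $L^\Phi$-estimate for $(M_\a f)^{\beta}$.

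The heart of the argument is the pointwise inequality
\begin{equation*}
M_\a f(y)\ \lesssim\ \bigl(M f(y)\bigr)^{\beta}\,\|f\|_{\mathcal{M}^{\Phi,\varphi}}^{1-\beta}
\end{equation*}
for $y$ in a fixed ball, which I would derive as follows. For $y\in B(x,r)$ and any $t>0$, estimate $|B(y,t)|^{-1+\a/n}\int_{B(y,t)}|f|$ in two ways: when $t$ is small, by $t^{\alpha}\,M f(y)$; when $t$ is large, by first applying Lemma~\ref{lemHold} to get $\int_{B(y,t)}|f|\lesssim |B(y,t)|\,\Phi^{-1}(|B(y,t)|^{-1})\,\|f\|_{L^\Phi(B(y,t))}$, and then the definition of the $\mathcal{M}^{\Phi,\varphi}$-norm gives $\Phi^{-1}(|B(y,t)|^{-1})\,\|f\|_{L^\Phi(B(y,t))}\le \varphi(y,t)\,\|f\|_{\mathcal{M}^{\Phi,\varphi}}$, so the large-$t$ contribution is $\lesssim t^{\alpha}\varphi(y,t)\,\|f\|_{\mathcal{M}^{\Phi,\varphi}}$. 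Optimizing the split over $t$ — balancing $t^{\alpha}M f(y)$ against $\sup_{t}t^{\alpha}\varphi(y,t)$ — and invoking hypothesis~\eqref{eq3.6.Vfrmax} in the form $\sup_{r<t<\infty}t^{\alpha}\varphi(x,t)\le C\varphi(x,r)^{\beta}$ yields the desired interpolation inequality with the correct exponent $\beta$. This is exactly the Adams/Hedberg device; condition~\eqref{eq3.6.Vfrmax} is engineered to make it go through with the right power.

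Once the pointwise bound is in hand, the rest is bookkeeping. Taking $L^\Phi(B(x,r))$-norms of $(M_\a f)$ raised to the appropriate power and using the norm identity from the first paragraph, I get
\begin{equation*}
\Psi^{-1}(|B(x,r)|^{-1})\,\|M_\a f\|_{L^\Psi(B(x,r))}
\ \lesssim\ \Bigl(\Phi^{-1}(|B(x,r)|^{-1})\,\|Mf\|_{L^\Phi(B(x,r))}\Bigr)^{\beta}\,\|f\|_{\mathcal{M}^{\Phi,\varphi}}^{1-\beta}.
\end{equation*}
Now Theorem~\ref{thm4.4.} applies: since $\Phi\in\Delta'\cap\nabla_2$, $\varphi\in\Omega_{\Phi}$, and $\varphi$ satisfies~\eqref{bounmax}, the maximal operator $M$ is bounded on $\mathcal{M}^{\Phi,\varphi}(\Rn)$, so $\Phi^{-1}(|B(x,r)|^{-1})\|Mf\|_{L^\Phi(B(x,r))}\le \|Mf\|_{\mathcal{M}^{\Phi,\varphi}}\,\varphi(x,r)\lesssim \varphi(x,r)\,\|f\|_{\mathcal{M}^{\Phi,\varphi}}$. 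Substituting and recalling $\eta(x,r)=\varphi(x,r)^{\beta}$ gives the bound $\lesssim \eta(x,r)\,\|f\|_{\mathcal{M}^{\Phi,\varphi}}$, and taking the supremum over $x$ and $r$ finishes the proof. (One should also check $\eta\in\Omega_\Psi$ and $\Psi\in\Delta'\cap\nabla_2$ so that the target space is nontrivial; the $r^{\alpha}\varphi(x,r)$ term in~\eqref{eq3.6.Vfrmax} and the definitions of $\Psi,\eta$ are what secure this, essentially by the same Young-function inequalities~\eqref{cvccv}.)

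The main obstacle I anticipate is handling the Orlicz machinery cleanly in the interpolation step: passing from $M_\a f$ to a product of a power of $Mf$ and a power of the norm requires care because Young functions are not homogeneous, so the ``optimize over $t$'' step cannot be done by naive scaling as in the $L^p$ case; instead one must split the supremum over $t$ at a threshold determined by $Mf(y)$ and the growth of $\varphi$, and keep track of the $\Delta'$-constant from Remark~\ref{trvorlmor} when manipulating $\Phi^{-1}$. The second delicate point is the norm identity $\|g\|_{L^\Psi(B)}=\|g^{\beta}\|_{L^\Phi(B)}^{\,?}$ with $\Psi(r)=\Phi(r^{1/\beta})$ — verifying it directly from the Luxemburg-norm definition is routine but must be stated precisely, since everything downstream hinges on the exponent matching $\Psi^{-1}=(\Phi^{-1})^{\beta}$.
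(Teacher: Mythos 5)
Your proposal is correct and follows essentially the same route as the paper: a Hedberg-type splitting (the paper writes $f=f\chi_{2B}+f\chi_{{}^{\complement}(2B)}$ for the fixed ball while you split the radii in the supremum, which is equivalent, with Lemma~\ref{lemHold} giving the far part), condition \eqref{eq3.6.Vfrmax} to balance the two terms into the pointwise bound $M_\a f(y)\lesssim (Mf(y))^{\beta}\|f\|_{\mathcal{M}^{\Phi,\varphi}}^{1-\beta}$ (the paper's \eqref{poiwseestfrmax}), the relation $\Psi^{-1}=(\Phi^{-1})^{\beta}$ together with $\|(Mf)^{\beta}\|_{L^{\Psi}(B)}\le\|Mf\|_{L^{\Phi}(B)}^{\beta}$ coming from \eqref{orlpr}, and finally Theorem~\ref{thm4.4.} for the boundedness of $M$ on $\mathcal{M}^{\Phi,\varphi}(\Rn)$. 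The only quibble is the norm identity as stated in your first paragraph, $\|g^{1/\beta}\|_{L^{\Psi}(B)}=\|g\|_{L^{\Phi}(B)}^{1/\beta}$, whose exponents are misplaced (the form you actually need and use in your displayed estimate is $\|g^{\beta}\|_{L^{\Psi}(B)}\le\|g\|_{L^{\Phi}(B)}^{\beta}$, and it needs no $\Delta^{\prime}$ argument), so this is a slip of notation rather than a gap.
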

\begin{proof}
For arbitrary ball $B=B(x,r)$ we represent $f$ as
\begin{equation*}
f=f_1+f_2, \ \quad f_1(y)=f(y)\chi _{2B}(y),\quad
 f_2(y)=f(y)\chi_{\dual {(2B)}}(y), \ \quad r>0,
\end{equation*}
and have
$$
M_\a f(x)=M_\a f_1(x)+M_\a f_2(x).
$$

Let $y$ be an arbitrary point in $B$. If $B(y,t)\cap {\dual}(B(x,2r))\neq\emptyset,$ then $t>r$. Indeed, if $z\in B(y,t)\cap  {\dual} (B(x,2r)),$
then $t > |y-z| \geq |x-z|-|x-y|>2r-r=r$.

On the other hand, $B(y,t)\cap {\dual} (B(x,2r))\subset B(x,2t)$. Indeed, if  $z\in B(y,t)\cap {\dual} (B(x,2r))$, then
we get $|x-z|\leq |y-z|+|x-y|<t+r<2t$.

Hence
\begin{equation*}
\begin{split}
M_{\a} f_2(y) & = \sup_{t>0}\frac{1}{|B(y,t)|^{1-\frac{\a}{n}}} \int_{B(y,t)\cap {{\dual}(B(x,2r))}}|f(z)|d z
\\
& \le 2^{n-\a} \, \sup_{t>r}\frac{1}{|B(x,2t)|^{1-\frac{\a}{n}}} \int_{B(x,2t)}|f(z)|d z
\\
&= 2^{n-\a} \, \sup_{t>2r} \frac{1}{|B(x,t)|^{1-\frac{\a}{n}}} \int_{B(x,t)}|f(z)| d z
\\
&\le C_2\sup_{r<t<\infty} \Phi^{-1}(|B(x,t)|^{-1})t^{\a}\|f\|_{L^{\Phi}(B(x,t))}.
\end{split}
\end{equation*}
Consequently from Hedberg's trick and the last inequality, we have
\begin{equation*}
\begin{split}
M_\a f(y) &
\lesssim
r^\a Mf(y)+\sup_{r<t<\infty} \Phi^{-1}(|B(x,t)|^{-1})t^{\a}\|f\|_{L^{\Phi}(B(x,t))}
\\
&\lesssim r^\a Mf(y)+\|f\|_{\mathcal{M}^{\Phi,\varphi}}\sup_{r<t<\infty} t^\a\varphi(x,t).
\end{split}
\end{equation*}

Thus, using the technique in \cite[p. 6492]{SawSugTan2}, by \eqref{eq3.6.Vfrmax} we obtain
\begin{align*}
|M_{\a} f(y)| & \lesssim  \min \{ \varphi(x,r)^{\beta-1} Mf(y), \varphi(x,r)^{\beta} \|f\|_{\mathcal{M}^{\Phi,\varphi}}\}
\\
& \lesssim \sup\limits_{s>0} \min \{ s^{\beta-1} Mf(y), s^{\beta} \|f\|_{\mathcal{M}^{\Phi,\varphi}}\}
\\
& = (Mf(y))^{\beta} \, \|f\|_{\mathcal{M}^{\Phi,\varphi}}^{1-\beta},
\end{align*}
where we have used that the supremum is achieved when the minimum parts are balanced.
Hence for every $y\in B$ we have
\begin{equation}\label{poiwseestfrmax}
M_{\a} f(y) \lesssim  (Mf(y))^{\beta} \, \|f\|_{\mathcal{M}^{\Phi,\varphi}}^{1-\beta}.
\end{equation}

By using the inequality \eqref{poiwseestfrmax} we have
$$
\|M_{\a} f\|_{L^{\Psi}(B)}\lesssim \|(Mf)^{\beta}\|_{L^{\Psi}(B)}\, \|f\|_{\mathcal{M}^{\Phi,\varphi}}^{1-\beta}.
$$

Note that from \eqref{orlpr} we get
$$
\int_B \Psi\left(\frac{(Mf(x))^{\beta}}{\|Mf\|_{L^{\Phi}(B)}^{\beta}}\right)dx=\int_B \Phi\left(\frac{Mf(x)}{\|Mf\|_{L^{\Phi}(B)}}\right)dx\leq 1.
$$
Thus $\|(Mf)^{\beta}\|_{L^{\Psi}(B)}\leq \|Mf\|_{L^{\Phi}(B)}^{\beta}$. Consequently by using the boundedness of the maximal operator, we get
\begin{align*}
& \eta(x,r)^{-1}\Psi^{-1}(|B|^{-1})\|M_{\a} f\|_{L^{\Psi}(B)}\lesssim \eta(x,r)^{-1}\Psi^{-1}(|B|^{-1})\|Mf\|_{L^{\Phi}(B)}^{\beta}\, \|f\|_{\mathcal{M}^{\Phi,\varphi}}^{1-\beta}\\
&=\left(\varphi(x,r)^{-1}\Phi^{-1}(|B|^{-1})\|Mf\|_{L^{\Phi}(B)}\right)^{\beta}\, \|f\|_{\mathcal{M}^{\Phi,\varphi}}^{1-\beta} \lesssim \|f\|_{\mathcal{M}^{\Phi,\varphi}}.
\end{align*}
By taking the supremum of all $B$, we get the desired result.
\end{proof}
\begin{rem}
Note that, for $\eta(x,r)\equiv\varphi(x,r)^{\beta}$ and $\Psi(r)\equiv\Phi(r^{1/\beta})$, $\varphi \in \Omega_{\Phi}$ implies  that $\eta \in \Omega_{\Psi}$.
Also, $\Phi\in \Delta^{\prime}$ implies $\Psi\in \Delta^{\prime}$.
\end{rem}

The following theorem is one of our main results.
\begin{thm} (Adams type result) \label{AdGulFrMaxOrlMorNec}

Let $0<\a<n$, $\Phi\in \Delta^{\prime}$, $\varphi \in \Omega_{\Phi}$, $\beta\in(0,1)$, $\eta(t)\equiv\varphi(t)^{\beta}$ and $\Psi(t)\equiv\Phi(t^{1/\beta})$.

$1.~$ If $\Phi\in\nabla_2$ and $\varphi(t)$
satisfies \eqref{bounmax}, then the condition
\begin{equation}\label{eq3.6.V}
t^{\alpha}\varphi(t) + \sup_{t<r<\infty} r^{\alpha}\, \varphi(r)  \le C
\varphi(t)^{\beta},
\end{equation}
for all $t>0$, where $C>0$ does not depend on $t$, is sufficient for the boundedness of $M_{\a}$ from $\mathcal{M}^{\Phi,\varphi}(\Rn)$ to $\mathcal{M}^{\Psi,\eta}(\Rn)$.

$2.~$ If $\varphi\in{\mathcal{G}}_{\Phi}$, then the condition
\begin{equation}\label{condAdams}
t^{\alpha}\varphi(t)\le C \varphi(t)^{\beta},
\end{equation}
for all $t>0$, where $C>0$ does not depend on $t$, is necessary for the boundedness of $M_{\a}$ from $\mathcal{M}^{\Phi,\varphi}(\Rn)$ to $\mathcal{M}^{\Psi,\eta}(\Rn)$.

$3.~$ Let $\Phi\in\nabla_2$ and $\varphi\in{\mathcal{G}}_{\Phi}$. Then, the condition \eqref{condAdams}
is necessary and sufficient for the boundedness of $M_{\a}$ from $\mathcal{M}^{\Phi,\varphi}(\Rn)$ to $\mathcal{M}^{\Psi,\eta}(\Rn)$.
\end{thm}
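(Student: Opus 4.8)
The statement is largely an assembly of results already in hand, so I would organize the proof around the three parts and treat each as a short reduction. Part~1 is a direct specialization of Theorem~\ref{AdGulFrMaxOrlMor}: when $\varphi$ does not depend on $x$, condition \eqref{eq3.6.Vfrmax} is literally \eqref{eq3.6.V} (only the names of the two variables are interchanged), and the remaining hypotheses $\Phi\in\Delta^{\prime}\cap\nabla_2$, $\varphi\in\Omega_{\Phi}$ and \eqref{bounmax} are exactly the standing assumptions of Part~1; hence Theorem~\ref{AdGulFrMaxOrlMor} yields boundedness of $M_{\a}$ from $\mathcal{M}^{\Phi,\varphi}(\Rn)$ to $\mathcal{M}^{\Psi,\eta}(\Rn)$ with $\eta=\varphi^{\beta}$, $\Psi(t)=\Phi(t^{1/\beta})$, and by the remark following that theorem one has $\eta\in\Omega_{\Psi}$ and $\Psi\in\Delta^{\prime}$, so the target space is the expected one.

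For Part~2 I would test the assumed boundedness on characteristic functions of balls. Fix $B_0=B(x_0,r_0)$ and apply $M_{\a}$ to $f=\chi_{B_0}$. Lemma~\ref{estFrMax} gives $M_{\a}\chi_{B_0}(y)\gtrsim r_0^{\a}$ for every $y\in B_0$, so $M_{\a}\chi_{B_0}\gtrsim r_0^{\a}\chi_{B_0}$ and, by monotonicity and homogeneity of the quasinorm,
\[
r_0^{\a}\,\|\chi_{B_0}\|_{\mathcal{M}^{\Psi,\eta}}\lesssim\|M_{\a}\chi_{B_0}\|_{\mathcal{M}^{\Psi,\eta}}\lesssim\|\chi_{B_0}\|_{\mathcal{M}^{\Phi,\varphi}}.
\]
Since $\varphi\in{\mathcal{G}}_{\Phi}$, Lemma~\ref{charOrlMor} gives $\|\chi_{B_0}\|_{\mathcal{M}^{\Phi,\varphi}}\approx 1/\varphi(r_0)$; and since $\Psi^{-1}(s)=\big(\Phi^{-1}(s)\big)^{\beta}$, the function $\eta=\varphi^{\beta}$ is almost decreasing and $\eta(t)/\Psi^{-1}(t^{-n})=\big(\varphi(t)/\Phi^{-1}(t^{-n})\big)^{\beta}$ is almost increasing, i.e.\ $\eta\in{\mathcal{G}}_{\Psi}$, so Lemma~\ref{charOrlMor} also gives $\|\chi_{B_0}\|_{\mathcal{M}^{\Psi,\eta}}\approx 1/\varphi(r_0)^{\beta}$. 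Substituting yields $r_0^{\a}\varphi(r_0)\lesssim\varphi(r_0)^{\beta}$, and since $r_0>0$ is arbitrary this is precisely \eqref{condAdams}.

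For Part~3, necessity is Part~2 (the hypothesis $\varphi\in{\mathcal{G}}_{\Phi}$ is in force), so it remains to derive the hypotheses of Part~1 from $\Phi\in\nabla_2$, $\varphi\in{\mathcal{G}}_{\Phi}$ and \eqref{condAdams}, and then invoke Part~1. Condition \eqref{eq3.6.V} follows at once: its first summand is controlled by \eqref{condAdams} itself, and for $r>t$ one has $r^{\a}\varphi(r)\lesssim\varphi(r)^{\beta}\lesssim\varphi(t)^{\beta}$, using \eqref{condAdams} at scale $r$ and the almost decreasingness of $\varphi$; take the supremum over $r>t$. For \eqref{bounmax}, since $|B(x,s)|=v_n s^n$ and $\Phi^{-1}(Cu)\approx\Phi^{-1}(u)$, it suffices to estimate $\sup_{r<t<\i}\Phi^{-1}(t^{-n})\,\es_{t<s<\i}\frac{\varphi(s)}{\Phi^{-1}(s^{-n})}$ by a multiple of $\varphi(r)$; the point is that on the window $(t,2t)$ one has $\varphi(s)\le C\varphi(t)$ and $\Phi^{-1}(s^{-n})\ge\Phi^{-1}((2t)^{-n})\gtrsim\Phi^{-1}(t^{-n})$ by \eqref{cvccv}, hence $\frac{\varphi(s)}{\Phi^{-1}(s^{-n})}\lesssim\frac{\varphi(t)}{\Phi^{-1}(t^{-n})}$ there, so that $\es_{t<s<\i}\frac{\varphi(s)}{\Phi^{-1}(s^{-n})}\lesssim\frac{\varphi(t)}{\Phi^{-1}(t^{-n})}$; multiplying by $\Phi^{-1}(t^{-n})$ reduces \eqref{bounmax} to $\sup_{r<t<\i}\varphi(t)\lesssim\varphi(r)$, which is again almost decreasingness.

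The part needing genuine (though modest) care is this last implication $\varphi\in{\mathcal{G}}_{\Phi}\Rightarrow\eqref{bounmax}$ — specifically the estimate of the essential infimum over $(t,\i)$ by the value of the quotient at $t$, where restricting to $(t,2t)$ and using $\Phi^{-1}((2t)^{-n})\approx\Phi^{-1}(t^{-n})$ are essential — together with the routine bookkeeping that $\Psi$ inherits $\Delta^{\prime}$ (and, if one wishes, $\nabla_2$) from $\Phi$ and that $\eta$ inherits $\Omega_{\Psi}$ and ${\mathcal{G}}_{\Psi}$ from $\varphi$. All the analytic weight lies in Theorem~\ref{thm4.4.}, Theorem~\ref{AdGulFrMaxOrlMor}, Lemma~\ref{estFrMax} and Lemma~\ref{charOrlMor}, which are already established.
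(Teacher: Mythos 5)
Your proposal is correct and follows essentially the same route as the paper: Part~1 as a corollary of Theorem~\ref{AdGulFrMaxOrlMor}, Part~2 by testing the boundedness on $\chi_{B_0}$ via Lemma~\ref{estFrMax} together with the characteristic-function norm estimates, and Part~3 by combining the first two parts. The only differences are cosmetic: in Part~2 you lower-bound $\|\chi_{B_0}\|_{\mathcal{M}^{\Psi,\eta}}$ after checking $\eta\in{\mathcal{G}}_{\Psi}$, whereas the paper works directly with $\Psi^{-1}(|B_0|^{-1})\|M_{\a}\chi_{B_0}\|_{L^{\Psi}(B_0)}$ using Lemma~\ref{charorlc} and only needs Lemma~\ref{charOrlMor} on the source side; and in Part~3 you spell out the two implications ($\varphi\in{\mathcal{G}}_{\Phi}$ gives \eqref{bounmax}, and \eqref{condAdams} with almost decreasingness gives \eqref{eq3.6.V}) which the paper leaves implicit in its one-line deduction.
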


\begin{proof}
The first part of the theorem is a corollary of Theorem \ref{AdGulFrMaxOrlMor}.

We shall now prove the necessary part. Let $B_0=B(x_0,t_0)$ and $x\in B_0$. By Lemma \ref{estFrMax} we have $t_0^{\alpha}\leq C M_{\a} \chi_{B_0}(x)$. Therefore, by by Lemmas \ref{charorlc} and \ref{charOrlMor}
\begin{align*}
t_0^{\alpha}&\leq C\Psi^{-1}(|B_0|^{-1})\|M_{\a} \chi_{B_0}\|_{L^{\Psi}(B_0)} \leq C\eta(t_0)\|M_{\a} \chi_{B_0}\|_{\mathcal{M}^{\Psi,\eta}} \\
&\leq C\eta(t_0)\|\chi_{B_0}\|_{\mathcal{M}^{\Phi,\varphi}}\leq C\frac{\eta(t_0)}{\varphi(t_0)}\leq C \varphi(t_0)^{\beta-1}.
\end{align*}
Since this is true for every $t_0>0$, we are done.
The third statement of the theorem follows from the first and second parts of the theorem.
\end{proof}

If we take $\Phi(t)=t^{p},\,p\in[1,\infty)$ and $\beta=\frac{p}{q}$ with $p < q<\infty$ at Theorem \ref{AdGulFrMaxOrlMorNec}
we get the following result for the generalized Morrey spaces which also can be seen as a special case of \cite[Theorem 1]{HakNakSaw}.
\begin{cor}\label{gmcorad}

Let $0< \alpha<n$, $1<p < q<\infty$ and $\varphi \in \Omega_{p} \equiv \Omega_{t^{p}}$.

$1.~$ If $\varphi(t)$ satisfies
\begin{equation}\label{eq4.6.GSMax}
\sup_{r<t<\infty} \frac{\es_{t<s<\i}\varphi(s)s^{\frac{n}{p}}}{t^{\frac{n}{p}}}\le C\varphi(r),
\end{equation}
then the condition
\begin{equation}\label{eq3.6.VGM}
t^{\alpha}\varphi(t) + \sup_{t<r<\infty} r^{\alpha}\, \varphi(r)  \le C
\varphi(t)^{\frac{p}{q}},
\end{equation}
for all $t>0$, where $C>0$ does not depend on $t$, is sufficient for the boundedness of $M_{\a}$ from $\mathcal{M}^{p,\varphi}(\Rn)$ to $\mathcal{M}^{q,\varphi^{\frac{p}{q}}}(\Rn)$.

$2.~$ If $\varphi\in{\mathcal{G}}_{p}\equiv \mathcal{G}_{t^{p}}$, then the condition
\begin{equation}\label{condad}
t^{\alpha}\varphi(t)\le C \varphi(t)^{\frac{p}{q}},
\end{equation}
for all $t>0$, where $C>0$ does not depend on $t$, is necessary for the boundedness of $M_{\a}$ from $\mathcal{M}^{p,\varphi}(\Rn)$ to $\mathcal{M}^{q,\varphi^{\frac{p}{q}}}(\Rn)$.

$3.~$ If $\varphi\in{\mathcal{G}}_{p}$, then the condition \eqref{condad}
is necessary and sufficient for the boundedness of $M_{\a}$ from $\mathcal{M}^{p,\varphi}(\Rn)$ to $\mathcal{M}^{q,\varphi^{\frac{p}{q}}}(\Rn)$.
\end{cor}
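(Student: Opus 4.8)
The plan is to derive Corollary~\ref{gmcorad} directly from Theorem~\ref{AdGulFrMaxOrlMorNec} by taking $\Phi(t)=t^{p}$ and $\beta=p/q$; no new analysis is needed, only a translation of the Orlicz data into the Morrey data. First I would record the relevant facts about the power function: for $1<p<\infty$ one has $\Phi(t)=t^{p}\in\Delta^{\prime}\cap\nabla_2$, since $(tr)^{p}=t^{p}r^{p}$ gives the $\Delta^{\prime}$-inequality with constant $1$, and choosing $C>1$ with $C^{p-1}\ge 2$ gives the $\nabla_2$-condition. Moreover $\Phi^{-1}(s)=s^{1/p}$, so $\Phi^{-1}(|B(x,r)|^{-1})=(v_n r^{n})^{-1/p}\approx r^{-n/p}$; substituting this into Definition~\ref{genOrlMor} identifies $\mathcal{M}^{\Phi,\varphi}(\Rn)$ with the generalized Morrey space $\mathcal{M}^{p,\varphi}(\Rn)$ with equivalent quasinorms (the constant $v_n^{-1/p}$ being harmless by the scaling equivalences for $\Phi^{-1}$ recorded after \eqref{cvccv}), and comparison of the defining suprema identifies $\Omega_{\Phi}$ with $\Omega_{p}$ and ${\mathcal{G}}_{\Phi}$ with ${\mathcal{G}}_{p}$.

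Next I would compute the target data produced by $\beta=p/q\in(0,1)$: here $\Psi(t)=\Phi(t^{1/\beta})=(t^{q/p})^{p}=t^{q}$ and $\eta(t)=\varphi(t)^{\beta}=\varphi(t)^{p/q}$, so $\mathcal{M}^{\Psi,\eta}(\Rn)=\mathcal{M}^{q,\varphi^{p/q}}(\Rn)$. It then remains to rewrite the three hypotheses of Theorem~\ref{AdGulFrMaxOrlMorNec} in this notation. Using $\Phi^{-1}(|B(x,t)|^{-1})=v_n^{-1/p}t^{-n/p}$, the two occurrences of $v_n^{\pm 1/p}$ inside \eqref{bounmax} cancel and that condition becomes precisely \eqref{eq4.6.GSMax}; likewise \eqref{eq3.6.V} becomes \eqref{eq3.6.VGM} and \eqref{condAdams} becomes \eqref{condad}, once $\varphi(t)^{\beta}$ is read as $\varphi(t)^{p/q}$.

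With these identifications in place I would simply invoke Theorem~\ref{AdGulFrMaxOrlMorNec}: its part~1 (legitimate because $\Phi=t^{p}\in\nabla_2$ when $p>1$) yields part~1 of the corollary; its part~2 (which needs only $\Phi\in\Delta^{\prime}$) yields part~2; and its part~3 yields part~3. I do not anticipate any genuine obstacle, since the whole argument is a bookkeeping exercise; the only items worth a careful line are the cancellation of the dimensional constant $v_n$ when translating \eqref{bounmax}, and the observation that Theorem~\ref{AdGulFrMaxOrlMorNec} is already phrased with an $x$-independent weight $\varphi=\varphi(t)$, exactly as the corollary requires.
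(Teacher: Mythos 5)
Your proposal is correct and follows exactly the paper's route: the paper itself obtains Corollary \ref{gmcorad} purely by specializing Theorem \ref{AdGulFrMaxOrlMorNec} to $\Phi(t)=t^{p}$, $\beta=p/q$, with the same identifications $\Psi(t)=t^{q}$, $\eta=\varphi^{p/q}$, $\mathcal{M}^{\Phi,\varphi}=\mathcal{M}^{p,\varphi}$, and the translation of \eqref{bounmax}, \eqref{eq3.6.V}, \eqref{condAdams} into \eqref{eq4.6.GSMax}, \eqref{eq3.6.VGM}, \eqref{condad}. Your added checks (that $t^{p}\in\Delta^{\prime}\cap\nabla_2$ for $p>1$ and that the constant $v_n^{\pm 1/p}$ cancels in \eqref{bounmax}) are exactly the bookkeeping the paper leaves implicit.
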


If we take $\varphi(t)=\frac{\Phi^{-1}\big(t^{-n}\big)}{\Phi^{-1}\big(t^{-\lambda}\big)}$, $0 \le \lambda \le n$, $\Psi(t)\equiv\Phi(t^{1/\beta})$, $\beta\in(0,1)$,
$$
\eta(t)\equiv\varphi(t)^{\beta}=\Big(\frac{\Phi^{-1}\big(t^{-n}\big)}{\Phi^{-1}\big(t^{-\lambda}\big)}\Big)^{\beta}
=\frac{\Psi^{-1}\big(t^{-n}\big)}{\Psi^{-1}\big(t^{-\lambda}\big)},
$$
at Theorem \ref{AdGulFrMaxOrlMorNec} we get the following new result for Orlicz-Morrey spaces.
\begin{cor}\label{MarchN6}
Let $\Phi\in \Delta^{\prime}\cap \nabla_2$, $\Psi(t)\equiv\Phi(t^{1/\beta})$ and $\beta\in(0,1)$. If
\begin{equation}\label{intcondAdamsOrlMorNw}
\sup_{t<r<\infty} r^{\alpha}\, \frac{\Phi^{-1}\big(r^{-n}\big)}{\Phi^{-1}\big(r^{-\lambda}\big)}  \le C t^{\a}\frac{\Phi^{-1}\big(t^{-n}\big)}{\Phi^{-1}\big(t^{-\lambda}\big)},
\end{equation}
for all $t>0$, where $C>0$ does not depend on $t$, then the condition
\begin{equation}\label{NScondAdamsOrlMorNw}
t^{\alpha}  \le C \left[\frac{\Phi^{-1}\big(t^{-n}\big)}{\Phi^{-1}\big(t^{-\lambda}\big)}\right]^{\beta-1}
\end{equation}
for all $t>0$, where $C>0$ does not depend on $t$, is necessary and sufficient for the boundedness of $M_{\a}$ from $\mathcal{M}^{\Phi,\lambda}(\Rn)$ to $\mathcal{M}^{\Psi,\lambda}(\Rn)$.
\end{cor}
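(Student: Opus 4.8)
The plan is to deduce Corollary \ref{MarchN6} from Theorem \ref{AdGulFrMaxOrlMorNec} by specializing to the one-variable weight $\varphi(t)=\Phi^{-1}(t^{-n})/\Phi^{-1}(t^{-\lambda})$. First comes the bookkeeping on the underlying spaces. By the remark following Definition \ref{genOrlMor}, and since $|B(x,r)|=v_nr^n$ with the $v_n$-factors absorbed by $\Phi^{-1}(Cr)\approx\Phi^{-1}(r)$, this $\varphi$ reproduces the Orlicz--Morrey space, i.e.\ $\mathcal{M}^{\Phi,\varphi}(\Rn)=\mathcal{M}^{\Phi,\lambda}(\Rn)$. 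Because $\Psi(r)=\Phi(r^{1/\beta})$ gives $\Psi^{-1}(s)=(\Phi^{-1}(s))^{\beta}$, one has $\eta(t)=\varphi(t)^{\beta}=\Psi^{-1}(t^{-n})/\Psi^{-1}(t^{-\lambda})$, so likewise $\mathcal{M}^{\Psi,\eta}(\Rn)=\mathcal{M}^{\Psi,\lambda}(\Rn)$. Thus it is enough to show that, under \eqref{intcondAdamsOrlMorNw}, the condition \eqref{NScondAdamsOrlMorNw} is necessary and sufficient for $M_{\a}\colon\mathcal{M}^{\Phi,\varphi}(\Rn)\to\mathcal{M}^{\Psi,\eta}(\Rn)$.

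Next I would check the structural hypotheses of Theorem \ref{AdGulFrMaxOrlMorNec} for this $\varphi$. The map $t\mapsto\varphi(t)/\Phi^{-1}(t^{-n})=1/\Phi^{-1}(t^{-\lambda})$ is non-decreasing, since $\Phi^{-1}$ is increasing and $t\mapsto t^{-\lambda}$ is decreasing (using $0\le\lambda\le n$). Moreover \eqref{intcondAdamsOrlMorNw} forces $\varphi$ to be almost decreasing: for $t<r$ it yields $r^{\a}\varphi(r)\le C\,t^{\a}\varphi(t)$, hence $\varphi(r)\le C(t/r)^{\a}\varphi(t)\le C\varphi(t)$. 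Therefore $\varphi\in\mathcal{G}_{\Phi}$, and in particular $\varphi\in\Omega_{\Phi}$ (Remark \ref{nontrivorlmor}). Finally \eqref{bounmax} holds for this $\varphi$: since $\varphi(s)/\Phi^{-1}(|B(x,s)|^{-1})\approx1/\Phi^{-1}(s^{-\lambda})$ is non-decreasing in $s$, its essential infimum over $(t,\infty)$ is comparable to $1/\Phi^{-1}(t^{-\lambda})$, so the left-hand side of \eqref{bounmax} is comparable to $\sup_{r<t<\infty}\varphi(t)$, which is $\lesssim\varphi(r)$ by the almost-decreasingness just obtained.

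It remains to match the pointwise conditions. The condition \eqref{NScondAdamsOrlMorNw}, that is $t^{\a}\le C\varphi(t)^{\beta-1}$, is literally $t^{\a}\varphi(t)\le C\varphi(t)^{\beta}$, which is \eqref{condAdams}; adding to it the bound $\sup_{t<r<\infty}r^{\a}\varphi(r)\le C\,t^{\a}\varphi(t)$ supplied by \eqref{intcondAdamsOrlMorNw} turns it into $t^{\a}\varphi(t)+\sup_{t<r<\infty}r^{\a}\varphi(r)\le C\varphi(t)^{\beta}$, which is exactly \eqref{eq3.6.V}. Hence the sufficiency of \eqref{NScondAdamsOrlMorNw} follows from part~1 of Theorem \ref{AdGulFrMaxOrlMorNec} (we have $\Phi\in\nabla_2$, $\varphi\in\Omega_{\Phi}$, and \eqref{bounmax}, \eqref{eq3.6.V}), and its necessity follows from part~2 (we have $\varphi\in\mathcal{G}_{\Phi}$), which gives precisely \eqref{condAdams}$=$\eqref{NScondAdamsOrlMorNw}; equivalently one may just invoke part~3 once $\varphi\in\mathcal{G}_{\Phi}$ is in hand.

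The step needing the most care is recognizing that the single extra hypothesis \eqref{intcondAdamsOrlMorNw} does double duty: it is what makes $\varphi$ almost decreasing — hence $\varphi\in\mathcal{G}_{\Phi}$, which is required both for the necessity half and for the validity of \eqref{bounmax} — and, combined with \eqref{NScondAdamsOrlMorNw}, it produces the full condition \eqref{eq3.6.V} needed for sufficiency. A secondary technical point is the reduction of \eqref{bounmax}, for this particular $\varphi$, to the almost-decreasingness of $\varphi$, which rests on the comparison $\varphi(s)/\Phi^{-1}(|B(x,s)|^{-1})\approx1/\Phi^{-1}(s^{-\lambda})$ obtained via $\Phi^{-1}(Cr)\approx\Phi^{-1}(r)$.
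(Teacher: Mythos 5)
Your proposal is correct and follows exactly the route the paper intends: Corollary \ref{MarchN6} is stated there as a direct specialization of Theorem \ref{AdGulFrMaxOrlMorNec} to $\varphi(t)=\Phi^{-1}(t^{-n})/\Phi^{-1}(t^{-\lambda})$, $\eta=\varphi^{\beta}$, $\Psi^{-1}=(\Phi^{-1})^{\beta}$, with no further proof given. Your verifications (that \eqref{intcondAdamsOrlMorNw} makes $\varphi$ almost decreasing, hence $\varphi\in\mathcal{G}_{\Phi}$ and \eqref{bounmax} holds, and that \eqref{NScondAdamsOrlMorNw} together with \eqref{intcondAdamsOrlMorNw} is exactly \eqref{eq3.6.V} while alone it is \eqref{condAdams}) are precisely the details the paper leaves implicit, and they are all accurate.
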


\begin{rem}\label{fat82}
If we take $\Phi(t)=t^{p}$, $\beta=\frac{p}{q}$ with $p < q$  at Corollary \ref{MarchN6},
then condition \eqref{intcondAdamsOrlMorNw} is equivalent to $0\le \lambda < n-\a p$ and condition \eqref{NScondAdamsOrlMorNw} is equivalent to $\frac{1}{p}-\frac{1}{q}=\frac{\alpha}{n-\lambda}$. Therefore, we get the following Adams result for Morrey spaces (see Theorem \ref{Adams1}).
\end{rem}

\begin{cor}
Let $0 < \alpha<n$, $1<p< q<\infty$ and $0\le\lambda< n-\alpha p$. Then
$M_{\alpha}$ is bounded from $\mathcal{M}^{p,\lambda}(\Rn)$
to $\mathcal{M}^{q,\lambda}(\Rn)$  if and only if $\frac{1}{p}-\frac{1}{q}=\frac{\alpha}{n-\lambda}$.
\end{cor}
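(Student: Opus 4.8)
The plan is to obtain this corollary as the special case of Corollary~\ref{MarchN6} in which $\Phi(t)=t^{p}$, exactly along the lines indicated in Remark~\ref{fat82}. First I would record that $\Phi(t)=t^{p}$ with $1<p<\infty$ belongs to $\Delta^{\prime}\cap\nabla_{2}$: the identity $\Phi(tr)=\Phi(t)\Phi(r)$ gives $\Phi\in\Delta^{\prime}$, and $\Phi(r)=r^{p}\le\frac{1}{2C}\Phi(Cr)$ for $C$ large (here $p>1$ is used) gives $\Phi\in\nabla_{2}$. Choosing $\beta=\frac{p}{q}\in(0,1)$ (legitimate since $p<q$), we have $\Psi(t)=\Phi(t^{1/\beta})=t^{q}$, so that $\mathcal{M}^{\Phi,\lambda}(\Rn)=\mathcal{M}^{p,\lambda}(\Rn)$ and $\mathcal{M}^{\Psi,\lambda}(\Rn)=\mathcal{M}^{q,\lambda}(\Rn)$.

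Next I would compute the relevant weight $\varphi(t)=\Phi^{-1}(t^{-n})/\Phi^{-1}(t^{-\lambda})=t^{-n/p}/t^{-\lambda/p}=t^{(\lambda-n)/p}$, and note that for $0\le\lambda\le n$ it lies in $\Omega_{\Phi}\cap\mathcal{G}_{\Phi}$: being a nonpositive power of $t$, $\varphi$ is (almost) decreasing, while $\varphi(t)/\Phi^{-1}(t^{-n})=t^{\lambda/p}$ is a nonnegative power, hence (almost) increasing. Thus all structural hypotheses of Corollary~\ref{MarchN6} are met, and it remains to translate its two analytic conditions.

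Then I would carry out the translation. Condition \eqref{intcondAdamsOrlMorNw} becomes $\sup_{t<r<\infty}r^{\,\alpha+(\lambda-n)/p}\le C\,t^{\,\alpha+(\lambda-n)/p}$; since the left-hand side is a supremum of a power of $r$ over $r>t$, it is finite and comparable to $t^{\,\alpha+(\lambda-n)/p}$ exactly when $\alpha+(\lambda-n)/p<0$, i.e.\ $\alpha p<n-\lambda$, i.e.\ $\lambda<n-\alpha p$ --- which is the standing assumption of the corollary. Condition \eqref{NScondAdamsOrlMorNw} becomes $t^{\alpha}\le C\,t^{(\lambda-n)(\beta-1)/p}$ for all $t>0$. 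A one-sided inequality between two powers of $t$ that holds on the whole half-line $(0,\infty)$ forces the exponents to coincide (let $t\to0^{+}$ and $t\to\infty$), so $\alpha=\frac{(\lambda-n)(\beta-1)}{p}$. Inserting $\beta-1=\frac{p}{q}-1=-\frac{q-p}{q}$ turns this into $\alpha=\frac{(n-\lambda)(q-p)}{pq}=(n-\lambda)\bigl(\tfrac1p-\tfrac1q\bigr)$, that is, $\frac1p-\frac1q=\frac{\alpha}{n-\lambda}$. Applying part~3 of Corollary~\ref{MarchN6} then gives that, under $\lambda<n-\alpha p$, the boundedness of $M_{\alpha}$ from $\mathcal{M}^{p,\lambda}(\Rn)$ to $\mathcal{M}^{q,\lambda}(\Rn)$ holds if and only if $\frac1p-\frac1q=\frac{\alpha}{n-\lambda}$, which is the assertion.

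No genuine difficulty arises in this reduction; the one step deserving an explicit sentence is the elementary observation that $t^{a}\le C\,t^{b}$ for all $t\in(0,\infty)$ entails $a=b$, since this is precisely what converts the homogeneity condition \eqref{NScondAdamsOrlMorNw} into the exact Sobolev relation between $p$, $q$, $\alpha$ and $\lambda$, and, likewise, lets \eqref{intcondAdamsOrlMorNw} be rewritten as the open inequality $\lambda<n-\alpha p$.
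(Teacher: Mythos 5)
Your proposal is correct and follows essentially the same route as the paper, which obtains this corollary exactly as in Remark~\ref{fat82}: specialize Corollary~\ref{MarchN6} to $\Phi(t)=t^{p}$, $\beta=p/q$, and observe that \eqref{intcondAdamsOrlMorNw} reduces to $\lambda<n-\alpha p$ while \eqref{NScondAdamsOrlMorNw} reduces to $\frac1p-\frac1q=\frac{\alpha}{n-\lambda}$. Your write-up merely supplies the routine verifications ($\Phi\in\Delta^{\prime}\cap\nabla_2$, $\varphi(t)=t^{(\lambda-n)/p}\in\Omega_{\Phi}\cap\mathcal{G}_{\Phi}$, and the exponent comparisons), which the paper leaves implicit.
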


To compare, we formulate the following theorem proved in \cite{GulDerPot} and remark below.

\begin{thm} \label{AdGulRszOrlMorNec}

Let $0<\a<n$, $\Phi\in \Delta^{\prime}$, $\varphi \in \Omega_{\Phi}$, $\beta\in(0,1)$, $\eta(t)\equiv\varphi(t)^{\beta}$ and $\Psi(t)\equiv\Phi(t^{1/\beta})$.

$1.~$ If $\Phi\in\nabla_2$ and $\varphi(t)$
satisfies \eqref{bounmax}, then the condition
\begin{equation*}
t^{\alpha}\varphi(t) + \int_{t}^{\infty} r^{\alpha}\, \varphi(r) \frac{dr}{r} \le C
\varphi(t)^{\beta},
\end{equation*}
for all $t>0$, where $C>0$ does not depend on $t$, is sufficient for the boundedness of $I_{\a}$ from $\mathcal{M}^{\Phi,\varphi}(\Rn)$ to $\mathcal{M}^{\Psi,\eta}(\Rn)$.

$2.~$ If $\varphi\in{\mathcal{G}}_{\Phi}$, then the condition \eqref{condAdams} is necessary for the boundedness of $I_{\a}$ from $\mathcal{M}^{\Phi,\varphi}(\Rn)$ to $\mathcal{M}^{\Psi,\eta}(\Rn)$.

$3.~$ Let $\Phi\in\nabla_2$. If $\varphi\in{\mathcal{G}}_{\Phi}$ satisfies the regularity condition
\begin{equation}\label{intcondAdamsx}
\int_{t}^{\infty} r^{\alpha}\, \varphi(r) \frac{dr}{r} \le C t^{\a}\varphi(t),
\end{equation}
for all $t>0$, where $C>0$ does not depend on $t$, then the condition \eqref{condAdams}
is necessary and sufficient for the boundedness of $I_{\a}$ from $\mathcal{M}^{\Phi,\varphi}(\Rn)$ to $\mathcal{M}^{\Psi,\eta}(\Rn)$.
\end{thm}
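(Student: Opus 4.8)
The plan is to mirror the three-part proof of Theorem~\ref{AdGulFrMaxOrlMorNec}, the one structural change being that the off-diagonal (``tail'') contribution of $I_\a$ is an integral $\int_r^\infty(\cdots)\,\frac{dt}{t}$ rather than the supremum $\sup_{r<t<\i}(\cdots)$ that arose for $M_\a$; this is precisely what replaces the supremum in \eqref{eq3.6.V} by the integral appearing in the hypothesis of part~1 and in \eqref{intcondAdamsx}.

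For the sufficiency (part~1) I would first establish the pointwise Hedberg--Adams estimate
\[
I_\a|f|(x)\lesssim r^\a\,Mf(x)+\int_r^\infty t^\a\,\Phi^{-1}\big(|B(x,t)|^{-1}\big)\,\|f\|_{L^\Phi(B(x,t))}\,\frac{dt}{t},\qquad r>0.
\]
Splitting the defining integral at $|x-y|=r$, the near part $\int_{|x-y|<r}$ is estimated by the dyadic decomposition over the annuli $2^{-j-1}r\le|x-y|<2^{-j}r$, each shell contributing $\lesssim(2^{-j}r)^\a Mf(x)$ and summing (since $\a>0$) to $r^\a Mf(x)$; the far part $\int_{|x-y|\ge r}$ is decomposed over $2^jr\le|x-y|<2^{j+1}r$, and on each shell Lemma~\ref{lemHold} converts $\int_{B(x,2^{j+1}r)}|f|$ into $\Phi^{-1}$ times an $L^\Phi$-norm, turning the geometric sum into the displayed integral. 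Inserting $\Phi^{-1}(|B(x,t)|^{-1})\|f\|_{L^\Phi(B(x,t))}\le\varphi(t)\|f\|_{\mathcal{M}^{\Phi,\varphi}}$ and using the integral half of the hypothesis, $\int_r^\infty t^\a\varphi(t)\frac{dt}{t}\le C\varphi(r)^\beta$, gives $I_\a|f|(x)\lesssim r^\a Mf(x)+\varphi(r)^\beta\|f\|_{\mathcal{M}^{\Phi,\varphi}}$ for every $r>0$. Using next the first half of the hypothesis, $t^\a\varphi(t)\le C\varphi(t)^\beta$ (that is, $r^\a\le C\varphi(r)^{\beta-1}$), and optimizing over $r$ by balancing the decreasing term $\varphi(r)^{\beta-1}Mf(x)$ against the increasing term $\varphi(r)^\beta\|f\|_{\mathcal{M}^{\Phi,\varphi}}$, I obtain the pointwise bound $I_\a f(x)\lesssim(Mf(x))^\beta\|f\|_{\mathcal{M}^{\Phi,\varphi}}^{1-\beta}$, the exact analogue of \eqref{poiwseestfrmax}. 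From here the computation is word for word that in the proof of Theorem~\ref{AdGulFrMaxOrlMor}: pass to the $L^\Psi(B)$-norm, use $\Psi(t)=\Phi(t^{1/\beta})$ with \eqref{orlpr} to get $\|(Mf)^\beta\|_{L^\Psi(B)}\le\|Mf\|_{L^\Phi(B)}^\beta$, and then invoke $\eta=\varphi^\beta$, $\Psi^{-1}(s)=\Phi^{-1}(s)^\beta$ together with the boundedness of $M$ on $\mathcal{M}^{\Phi,\varphi}$ (Theorem~\ref{thm4.4.}, available since $\Phi\in\nabla_2$ and \eqref{bounmax} holds) to conclude $\|I_\a f\|_{\mathcal{M}^{\Psi,\eta}}\lesssim\|f\|_{\mathcal{M}^{\Phi,\varphi}}$.

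For the necessity (part~2) the test function is again $\chi_{B_0}$, $B_0=B(x_0,r_0)$, but in place of Lemma~\ref{estFrMax} I would use the elementary lower bound $I_\a\chi_{B_0}(x)=\int_{B_0}|x-y|^{\a-n}\,dy\ge(2r_0)^{\a-n}|B_0|\gtrsim r_0^\a$ for $x\in B_0$ (since $|x-y|<2r_0$ there). Assuming $I_\a$ bounded, Lemmas~\ref{charorlc} and~\ref{charOrlMor} then give
\begin{align*}
r_0^\a&\lesssim\Psi^{-1}\big(|B_0|^{-1}\big)\,\|I_\a\chi_{B_0}\|_{L^\Psi(B_0)}\le\eta(r_0)\,\|I_\a\chi_{B_0}\|_{\mathcal{M}^{\Psi,\eta}}\\
&\lesssim\eta(r_0)\,\|\chi_{B_0}\|_{\mathcal{M}^{\Phi,\varphi}}\lesssim\frac{\eta(r_0)}{\varphi(r_0)}=\varphi(r_0)^{\beta-1},
\end{align*}
which is \eqref{condAdams} as $r_0$ ranges over $(0,\i)$. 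Finally, in part~3 the regularity condition \eqref{intcondAdamsx} lets the integral term be absorbed into $t^\a\varphi(t)$, so \eqref{condAdams} upgrades to the full hypothesis of part~1; moreover $\varphi\in\mathcal{G}_\Phi$ forces \eqref{bounmax} (the essential infimum over $s>t$ of the almost increasing $\varphi(s)/\Phi^{-1}(s^{-n})$ is comparable to its value at $t$, and the almost decreasing $\varphi$ makes the outer supremum comparable to $\varphi(r)$), so part~1 applies and, combined with part~2, yields the asserted equivalence.

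I expect the main obstacle to be the passage from the dyadic far-part sum to the integral $\int_r^\infty t^\a\,\Phi^{-1}(|B(x,t)|^{-1})\|f\|_{L^\Phi(B(x,t))}\frac{dt}{t}$ and its subsequent control by $\varphi(r)^\beta$: this is the one place where the argument genuinely departs from the $M_\a$ case, and it requires checking that the discrete bound from Lemma~\ref{lemHold} is comparable to the continuous integral uniformly in $x$ and $r$ — using that $t\mapsto\|f\|_{L^\Phi(B(x,t))}$ is nondecreasing, that $\Phi^{-1}(Ct^{-n})\approx\Phi^{-1}(t^{-n})$, and the monotonicity of $\varphi$. The balancing step also tacitly uses that $\varphi$ sweeps out enough values, which is guaranteed by $\varphi\in\Omega_\Phi\cap\mathcal{G}_\Phi$.
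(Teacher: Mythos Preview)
The paper does not contain a proof of Theorem~\ref{AdGulRszOrlMorNec}; the result is quoted for comparison and attributed to \cite{GulDerPot} (see the sentence immediately preceding the theorem). There is therefore no in-paper argument to compare against.

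That said, your outline is the correct and expected one: it is precisely the adaptation of the paper's proof of Theorems~\ref{AdGulFrMaxOrlMor}--\ref{AdGulFrMaxOrlMorNec} to $I_\alpha$, with the supremum tail replaced by the integral tail coming from the dyadic decomposition of $\int_{|x-y|\ge r}|f(y)|\,|x-y|^{\alpha-n}\,dy$ together with Lemma~\ref{lemHold}. This is how \cite{GulDerPot} proceeds. Your necessity argument (testing on $\chi_{B_0}$ with the elementary lower bound $I_\alpha\chi_{B_0}(x)\gtrsim r_0^\alpha$ for $x\in B_0$) and your reduction of part~3 to parts~1 and~2 via \eqref{intcondAdamsx} and the observation that $\varphi\in\mathcal{G}_\Phi$ implies \eqref{bounmax} are also correct. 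The two obstacles you flag are real but routine: the passage from the dyadic far-part sum to the continuous integral uses only the monotonicity of $t\mapsto\|f\|_{L^\Phi(B(x,t))}$ and the doubling $\Phi^{-1}(Ct^{-n})\approx\Phi^{-1}(t^{-n})$; and the balancing step can be run exactly as in the proof of Theorem~\ref{AdGulFrMaxOrlMor} through the device of \cite{SawSugTan2} (bounding by $\sup_{s>0}\min\{s^{\beta-1}Mf(y),\,s^\beta\|f\|_{\mathcal{M}^{\Phi,\varphi}}\}$), which requires no surjectivity of $\varphi$.
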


\begin{rem}\label{supzyg} Although fractional maximal function is pointwise dominated by the Riesz potential, and consequently, the results for the former could be derived from the results for the latter, we consider them separately, because  we are able to study the fractional maximal operator under weaker assumptions than it derived from the results for the potential operator. More precisely, for $\varphi\in{\mathcal{G}}_{\Phi}$, we don't need to regularity condition \eqref{intcondAdamsx} for the boundedness of fractional maximal operator.
\end{rem}

\section{Weak type results}

\begin{defn} Let $\Phi$ be a Young function. The weak Orlicz space is defined as
$$
WL^{\Phi}(\mathbb{R}^{n}):=\{f\in  L_{\mathrm{loc}}^1(\mathbb{R}^{n}):\Vert f\Vert_{WL^{\Phi}}<\infty \},
$$
where
$$
\Vert f\Vert_{WL^{\Phi}}=\inf\left\{\lambda>0\ :\ \sup_{t>0}\Phi(\frac{t}{\lambda})d_{f}(t)\ \leq 1\right\},
$$
and $d_{f}(t)=|\{x\in\Rn: |f(x)|>t\}|$.
\end{defn}

\begin{lem}
If $0<\Vert f\Vert_{WL^{\Phi}}<\infty$, then
\begin{equation}\label{worlpr}
\sup_{t>0}\Phi(\frac{t}{\Vert f\Vert_{WL^{\Phi}}})d_{f}(t)\ \leq 1.
\end{equation}
\end{lem}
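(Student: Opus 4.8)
The plan is to mimic the proof of the analogous property \eqref{orlpr} for the strong Orlicz norm: the weak Orlicz norm is defined as an infimum, so the content of the lemma is that this infimum is attained, i.e.\ that the defining set
$$\Lambda_f:=\Big\{\lambda>0:\ \sup_{t>0}\Phi\big(\tfrac{t}{\lambda}\big)\,d_f(t)\le 1\Big\}$$
contains its own infimum. Since $0<\|f\|_{WL^{\Phi}}<\infty$, the set $\Lambda_f$ is nonempty and $\lambda_0:=\|f\|_{WL^{\Phi}}=\inf\Lambda_f>0$.

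First I would choose $\lambda_k\in\Lambda_f$ with $\lambda_k\downarrow\lambda_0$; such a sequence exists by definition of the infimum. Fix $t>0$. Then $t/\lambda_k\uparrow t/\lambda_0$, and since a Young function is left-continuous (Definition \ref{def2}), $\Phi(t/\lambda_k)\to\Phi(t/\lambda_0)$. From $\lambda_k\in\Lambda_f$ we have $\Phi(t/\lambda_k)\,d_f(t)\le 1$ for every $k$, and passing to the limit in $k$ gives $\Phi(t/\lambda_0)\,d_f(t)\le 1$. As $t>0$ was arbitrary, taking the supremum over $t>0$ yields $\sup_{t>0}\Phi(t/\lambda_0)\,d_f(t)\le 1$, which is precisely \eqref{worlpr}.

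The only point needing a little care is the product $\Phi(t/\lambda_k)\,d_f(t)$ when $d_f(t)=\infty$ (or $\Phi(t/\lambda_k)=\infty$): there the constraint forces the other factor to vanish for every $k$, hence also $\Phi(t/\lambda_0)\,d_f(t)=0$ under the usual convention $0\cdot\infty=0$; alternatively, if $\Phi\in\mathcal Y$ then $d_f(t)\le 1/\Phi(t/\lambda_k)<\infty$ for each $t>0$, so this situation does not arise and the limit passage is over genuine finite quantities. I expect this trivial edge case to be the only subtlety—the essential input is simply that \emph{left}-continuity of $\Phi$ (rather than right-continuity) is exactly what makes $\Phi(t/\lambda_k)$ converge to $\Phi(t/\lambda_0)$ along the \emph{decreasing} sequence $\lambda_k$, so that each of the defining inequalities survives the limit.
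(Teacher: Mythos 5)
Your argument is correct and is essentially the paper's own proof: both let $\lambda$ decrease to $\Vert f\Vert_{WL^{\Phi}}$ (you via a minimizing sequence in the defining set, the paper via all $\lambda>\Vert f\Vert_{WL^{\Phi}}$, which is the same thing since the admissible set is upward-closed by monotonicity of $\Phi$) and then invoke the left-continuity of $\Phi$ to pass each inequality $\Phi(t/\lambda)\,d_f(t)\le 1$ to the limit before taking the supremum in $t$. Your extra remark on the $d_f(t)=\infty$ edge case is a harmless refinement the paper leaves implicit.
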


\begin{proof}
By the definition of $\Vert \cdot\Vert_{WL^{\Phi}}$, for all $\lambda>\Vert f\Vert_{WL^{\Phi}}$ we have $\Phi(\frac{t}{\lambda})d_{f}(t)\ \leq 1,~\forall t>0$. Now as $\lambda$ decreases to $\Vert f\Vert_{WL^{\Phi}}$, the quotient $\frac{t}{\lambda}$ increases to $\frac{t}{\Vert f\Vert_{WL^{\Phi}}}$. By the left-continuity of $\Phi$, we have $\Phi\left(\frac{t}{\lambda}\right)\uparrow\Phi\left(\frac{t}{\Vert f\Vert_{WL^{\Phi}}}\right)$. Therefore we get the desired result.
\end{proof}

By elementary calculations we have the following.
\begin{lem}\label{charorlcw}
Let $\Phi$ be a Young function and $B$ a set in $\mathbb{R}^n$ with finite Lebesgue measure. Then
\begin{equation*}
\|\chi_{_B}\|_{WL^{\Phi}} = \frac{1}{\Phi^{-1}\left(|B|^{-1}\right)}.
\end{equation*}
\end{lem}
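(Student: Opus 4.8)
The plan is to reduce the defining infimum of $\|\chi_{_B}\|_{WL^{\Phi}}$ to a single scalar inequality for $\Phi$ and then invert it through the generalized inverse $\Phi^{-1}$, mirroring the computation behind the strong-space identity in Lemma \ref{charorlc}. We may assume $0<|B|<\infty$; if $|B|=0$ both sides vanish (with the convention $\Phi^{-1}(\infty)=\infty$).

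First I would compute the distribution function of $\chi_{_B}$. Since $\chi_{_B}$ takes only the values $0$ and $1$, one has $d_{\chi_{_B}}(t)=|B|$ for $0<t<1$ and $d_{\chi_{_B}}(t)=0$ for $t\ge 1$. Hence, for every $\lambda>0$,
$$
\sup_{t>0}\Phi\Big(\tfrac{t}{\lambda}\Big)\,d_{\chi_{_B}}(t)=|B|\sup_{0<t<1}\Phi\Big(\tfrac{t}{\lambda}\Big)=|B|\,\Phi\Big(\tfrac1\lambda\Big),
$$
the last equality because $\Phi$ is nondecreasing and left-continuous, so that $\Phi(t/\lambda)\uparrow\Phi(1/\lambda)$ as $t\uparrow 1$. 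Thus the condition appearing in the definition of $\|\cdot\|_{WL^{\Phi}}$ reads simply $\Phi(1/\lambda)\le|B|^{-1}$.

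The key step is the equivalence
$$
\Phi\Big(\tfrac1\lambda\Big)\le|B|^{-1}\quad\Longleftrightarrow\quad\frac1\lambda\le\Phi^{-1}\big(|B|^{-1}\big),
$$
which follows from $\Phi^{-1}(s)=\inf\{r\ge 0:\Phi(r)>s\}$ together with the monotonicity and left-continuity of $\Phi$: for the forward implication, $\Phi(1/\lambda)\le|B|^{-1}$ places $1/\lambda$ to the left of the half-line $\{r:\Phi(r)>|B|^{-1}\}$, whence $1/\lambda\le\Phi^{-1}(|B|^{-1})$; for the converse, left-continuity yields $\Phi(\Phi^{-1}(s))\le s$, so $1/\lambda\le\Phi^{-1}(|B|^{-1})$ forces $\Phi(1/\lambda)\le|B|^{-1}$. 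Since $0<|B|<\infty$ guarantees $0<\Phi^{-1}(|B|^{-1})<\infty$ (using $\lim_{r\to 0+}\Phi(r)=\Phi(0)=0$ and $\lim_{r\to\infty}\Phi(r)=\infty$), the admissible $\lambda$ form exactly the interval $[\,1/\Phi^{-1}(|B|^{-1}),\infty)$, and taking its infimum gives $\|\chi_{_B}\|_{WL^{\Phi}}=1/\Phi^{-1}(|B|^{-1})$.

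I do not anticipate a real obstacle: the only points requiring care are the correct manipulation of the generalized inverse $\Phi^{-1}$ and the appeal to left-continuity of $\Phi$ in the displayed equivalence, precisely the technicalities already dispatched for the strong Orlicz space in Lemma \ref{charorlc}.
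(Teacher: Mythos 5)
Your computation is correct and complete: the distribution function of $\chi_{_B}$, the identity $\sup_{0<t<1}\Phi(t/\lambda)=\Phi(1/\lambda)$ via left-continuity, and the equivalence $\Phi(1/\lambda)\le|B|^{-1}\Leftrightarrow 1/\lambda\le\Phi^{-1}(|B|^{-1})$ for the generalized inverse are exactly the ``elementary calculations'' the paper alludes to without writing out (it states the lemma with no proof, in parallel with Lemma \ref{charorlc}). Nothing further is needed.
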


\begin{defn}\label{WOrlMor}
For a Young function $\Phi$ and $\lambda\in \R$,
we denote by $W\mathcal{M}^{\Phi,\lambda}(\Rn)$ the weak Orlicz-Morrey space, defined as the space of all
functions $f\in WL^{\Phi}_{\rm loc}(\Rn)$ with finite quasinorm
$$
  \left\| f\right\|_{W\mathcal{M}^{\Phi,\lambda}}= \sup_{x\in \Rn, \; r>0}  \Phi^{-1}\big(|B(x,r)|^{-\frac{\lambda}{n}}\big) \|f\chi_{B(x,r)}\|_{WL^{\Phi}}.
$$
\end{defn}

\begin{defn}\label{genOrlMorW}
Let $\varphi(x,r)$ be a positive measurable function on $\Rn \times (0,\infty)$ and $\Phi$ any Young function.
By $W\mathcal{M}^{\Phi,\varphi}(\Rn)$ we denote the weak generalized Orlicz-Morrey space of all functions $f\in WL^{\Phi}_{\rm loc}(\Rn)$ for which
$$
\|f\|_{W\mathcal{M}^{\Phi,\varphi}} = \sup\limits_{x\in\Rn, r>0} \varphi(x,r)^{-1} \Phi^{-1}(|B(x,r)|^{-1}) \|f\|_{WL^{\Phi}(B(x,r))} < \infty.
$$
\end{defn}

\begin{lem}\label{charWOrlMor}
Let $B_0:=B(x_0,r_0)$. If $\varphi\in{\mathcal{G}}_{\Phi}$, then
$
\|\chi_{B_0}\|_{W\mathcal{M}^{\Phi,\varphi}}\approx \varphi(r_0)^{-1}.
$
\end{lem}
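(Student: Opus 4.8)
The plan is to mimic the proof of Lemma \ref{charOrlMor}, replacing the strong Orlicz norm by its weak counterpart throughout. The upper bound and lower bound are established separately, and in both directions the only special feature we exploit is the identity $\|\chi_{B(x,r)}\|_{WL^{\Phi}} = \Phi^{-1}(|B(x,r)|^{-1})^{-1}$ provided by Lemma \ref{charorlcw}, which is formally the same as the one for $L^{\Phi}$ in Lemma \ref{charorlc}. Concretely, write $B_0 = B(x_0,r_0)$ and note that $\chi_{B_0}\cdot \chi_{B(x,r)} = \chi_{B_0\cap B(x,r)}$, so for every ball $B(x,r)$ we have $\|\chi_{B_0}\|_{WL^{\Phi}(B(x,r))} = \|\chi_{B_0\cap B(x,r)}\|_{WL^{\Phi}} = \Phi^{-1}(|B_0\cap B(x,r)|^{-1})^{-1}$ when the intersection has positive measure (and $0$ otherwise).

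For the lower bound $\|\chi_{B_0}\|_{W\mathcal{M}^{\Phi,\varphi}} \gtrsim \varphi(r_0)^{-1}$, I would simply test the defining supremum at the particular ball $B(x_0,r_0)$ itself: since $B_0\cap B(x_0,r_0) = B_0$, the corresponding term equals $\varphi(x_0,r_0)^{-1}\Phi^{-1}(|B_0|^{-1})\cdot\Phi^{-1}(|B_0|^{-1})^{-1} = \varphi(r_0)^{-1}$, where I am using the convention (as in the statement) that $\varphi$ depends only on the radius. For the upper bound, fix an arbitrary ball $B(x,r)$; I want to bound $\varphi(x,r)^{-1}\Phi^{-1}(|B(x,r)|^{-1})\,\|\chi_{B_0}\|_{WL^{\Phi}(B(x,r))}$ by $C\varphi(r_0)^{-1}$. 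Split into the case $r \le r_0$ and the case $r > r_0$. When $r\le r_0$, we have $|B(x,r)\cap B_0| \le |B(x,r)|$, hence $\Phi^{-1}(|B(x,r)|^{-1})\|\chi_{B_0}\|_{WL^{\Phi}(B(x,r))} = \Phi^{-1}(|B(x,r)|^{-1})/\Phi^{-1}(|B(x,r)\cap B_0|^{-1}) \le 1$ because $\Phi^{-1}$ is increasing; then the almost-decreasing property of $\varphi\in\mathcal{G}_{\Phi}$ gives $\varphi(x,r)^{-1} \lesssim \varphi(r_0)^{-1}$ (as $r\le r_0$), and we are done. When $r > r_0$, we instead use $|B(x,r)\cap B_0| \le |B_0|$, so $\Phi^{-1}(|B(x,r)|^{-1})\|\chi_{B_0}\|_{WL^{\Phi}(B(x,r))} \le \Phi^{-1}(|B(x,r)|^{-1})/\Phi^{-1}(|B_0|^{-1})$; combining with the almost-increasing property of $t\mapsto \varphi(t)/\Phi^{-1}(t^{-n})$ — rewritten in terms of ball volume — yields exactly the factor needed to cancel against $\varphi(x,r)^{-1}$ and leave $C\varphi(r_0)^{-1}$.

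The main point to be careful about is bookkeeping with the two monotonicity properties packaged into the definition of $\mathcal{G}_{\Phi}$: one must match $\Phi^{-1}(|B(x,r)|^{-1}) = \Phi^{-1}(v_n^{-1} r^{-n})$ against $\Phi^{-1}(r^{-n})$ up to the harmless dilation constant noted in the Remark after \eqref{cvccv}, so that $\varphi(r)/\Phi^{-1}(|B(x,r)|^{-1}) \approx \varphi(r)/\Phi^{-1}(r^{-n})$ and the almost-increasing hypothesis applies. This is the only genuine obstacle, and it is the same one already handled in the proof of Lemma \ref{charOrlMor} in \cite{GulDerPot}; no new idea is required, only the verbatim substitution of $WL^{\Phi}$ for $L^{\Phi}$ justified by Lemma \ref{charorlcw}.
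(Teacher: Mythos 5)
Your proposal is correct and follows essentially the same route as the paper, which simply notes that the proof is analogous to Lemma \ref{charOrlMor} once Lemma \ref{charorlcw} supplies the identity $\|\chi_{B}\|_{WL^{\Phi}}=\Phi^{-1}(|B|^{-1})^{-1}$; you have merely written out the details (testing at $B_0$ for the lower bound, the $r\le r_0$ / $r>r_0$ split with the two monotonicity properties of $\mathcal{G}_{\Phi}$ for the upper bound) that the paper delegates to the strong-norm proof in \cite{GulDerPot}.
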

\begin{proof}
The proof is similar to the proof of Lemma \ref{charOrlMor} thanks to the Lemma \ref{charorlcw}.
\end{proof}

\begin{thm}\label{thm4.4.w}\cite{DerGulSam}
Let $\Phi$ be a Young function, the functions $\varphi \in \Omega_{\Phi}$ and $\Phi\in\Delta^{\prime}$ satisfy the condition
\eqref{bounmax}, then the maximal operator $M$ is bounded from $\mathcal{M}^{\Phi,\varphi}(\Rn)$ to $W\mathcal{M}^{\Phi,\varphi}(\Rn)$.
\end{thm}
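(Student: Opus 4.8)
The plan is to follow the proof scheme of Theorem \ref{thm4.4.}, replacing the strong Orlicz bound for $M$ by its weak analogue. The key point is that $M$ maps $L^{\Phi}(\Rn)$ into $WL^{\Phi}(\Rn)$ for \emph{every} Young function $\Phi$, with no $\nabla_2$ hypothesis; this is classical, and if one prefers a self-contained derivation it follows in a couple of lines from the truncated weak-$(1,1)$ inequality $|\{Mf>\lambda\}|\lesssim\lambda^{-1}\int_{\{|f|>\lambda/2\}}|f|$ together with \eqref{orlpr} and the fact that $t\mapsto\Phi(t)/t$ is nondecreasing (one splits $f$ at height $\lambda/2$, estimates the small part trivially and the large part by the weak-$(1,1)$ bound, then converts $\int_{\{|f|>\lambda/2\}}|f|$ into a level of $\Phi$ using that $u\le(s/\Phi(s))\Phi(u)$ for $u\ge s$). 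This is precisely why $\nabla_2$ disappears relative to Theorem \ref{thm4.4.}.

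Fix a ball $B=B(x,r)$ and write $f=f_1+f_2$ with $f_1=f\chi_{2B}$ and $f_2=f\chi_{\dual{(2B)}}$, so $Mf\le Mf_1+Mf_2$. For the local term, $\|Mf_1\|_{WL^{\Phi}(B)}\le\|Mf_1\|_{WL^{\Phi}}\lesssim\|f_1\|_{L^{\Phi}}=\|f\|_{L^{\Phi}(2B)}$. For the tail term I would reuse the geometric observation from the proof of Theorem \ref{AdGulFrMaxOrlMor} in the case $\alpha=0$: for $y\in B$, if $B(y,t)\cap\dual{(B(x,2r))}\neq\emptyset$ then $t>r$ and $B(y,t)\cap\dual{(B(x,2r))}\subset B(x,2t)$, so by Lemma \ref{lemHold}
$$
Mf_2(y)\ \lesssim\ \sup_{r<t<\infty}\frac{1}{|B(x,t)|}\int_{B(x,t)}|f(z)|\,dz\ \lesssim\ \sup_{r<t<\infty}\Phi^{-1}(|B(x,t)|^{-1})\,\|f\|_{L^{\Phi}(B(x,t))},
$$
a quantity independent of $y$. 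Since $\Phi^{-1}(|B|^{-1})\approx\Phi^{-1}(|2B|^{-1})$ by \eqref{cvccv}, the same supremum also dominates $\Phi^{-1}(|B|^{-1})\|f\|_{L^{\Phi}(2B)}$, hence it controls the local term as well (taking $t=2r$ on the right-hand side).

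It remains to feed in the Morrey structure: for $s>t$ one has $\|f\|_{L^{\Phi}(B(x,t))}\le\|f\|_{L^{\Phi}(B(x,s))}\le\varphi(x,s)\Phi^{-1}(|B(x,s)|^{-1})^{-1}\|f\|_{\mathcal{M}^{\Phi,\varphi}}$, so taking the essential infimum over $s>t$ and then invoking \eqref{bounmax} gives
$$
\sup_{r<t<\infty}\Phi^{-1}(|B(x,t)|^{-1})\|f\|_{L^{\Phi}(B(x,t))}\ \lesssim\ \|f\|_{\mathcal{M}^{\Phi,\varphi}}\sup_{r<t<\infty}\Phi^{-1}(|B(x,t)|^{-1})\es_{t<s<\infty}\frac{\varphi(x,s)}{\Phi^{-1}(|B(x,s)|^{-1})}\ \lesssim\ \varphi(x,r)\|f\|_{\mathcal{M}^{\Phi,\varphi}}.
$$
Combining the two displays with Lemma \ref{charorlcw} ($\|\chi_B\|_{WL^{\Phi}}=\Phi^{-1}(|B|^{-1})^{-1}$) yields $\varphi(x,r)^{-1}\Phi^{-1}(|B|^{-1})\|Mf\|_{WL^{\Phi}(B)}\lesssim\|f\|_{\mathcal{M}^{\Phi,\varphi}}$, and taking the supremum over all balls $B=B(x,r)$ gives the claim. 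The standing assumptions $\varphi\in\Omega_{\Phi}$ and $\Phi\in\Delta^{\prime}$ (Remark \ref{nontrivorlmor}) are used only to keep the spaces non-trivial. The one point deserving genuine care is the unconditional weak endpoint bound $M\colon L^{\Phi}\to WL^{\Phi}$; once that is in hand the argument is in fact shorter than in Theorem \ref{AdGulFrMaxOrlMor}, because with $\alpha=0$ no Hedberg-type balancing between the local and tail estimates is needed.
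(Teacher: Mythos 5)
Your proof is correct and follows essentially the same approach as the source the paper cites for this theorem (\cite{DerGulSam}; the paper itself gives no proof): a local/global splitting on each ball, Lemma \ref{lemHold} together with the geometric observation for the tail, the essential-infimum trick combined with \eqref{bounmax}, and the $\nabla_2$-free weak-type bound $M\colon L^{\Phi}(\Rn)\to WL^{\Phi}(\Rn)$, which is exactly why $\nabla_2$ is not needed here. The only step worth making explicit is the quasi-subadditivity of $\Vert\cdot\Vert_{WL^{\Phi}}$ used when you split $Mf\le Mf_1+Mf_2$; it holds with a fixed constant and is absorbed into your implicit constants.
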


\begin{thm}\label{AdGulFrMaxOrlMorW}
Let $\Phi\in \Delta^{\prime}$ and $0< \a<n$. Let $\varphi \in \Omega_{\Phi}$ satisfy the conditions \eqref{bounmax} and
\eqref{eq3.6.Vfrmax}.
Define $\eta(x,t)\equiv\varphi(x,t)^{\beta}$ and $\Psi(t)\equiv\Phi(t^{1/\beta})$ for $\beta\in(0,1)$. Then the operator $M_\a$ is bounded from $\mathcal{M}^{\Phi,\varphi}(\Rn)$ to $W\mathcal{M}^{\Psi,\eta}(\Rn)$.
\end{thm}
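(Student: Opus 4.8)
The plan is to repeat the proof of Theorem~\ref{AdGulFrMaxOrlMor} essentially verbatim, making only one substitution: wherever that argument invokes the strong boundedness of $M$ on $\mathcal{M}^{\Phi,\varphi}(\Rn)$ (Theorem~\ref{thm4.4.}, which requires $\Phi\in\nabla_2$), we instead invoke the weak-type boundedness $M\colon\mathcal{M}^{\Phi,\varphi}(\Rn)\to W\mathcal{M}^{\Phi,\varphi}(\Rn)$ provided by Theorem~\ref{thm4.4.w}, which holds already under $\Phi\in\Delta^{\prime}$ together with \eqref{bounmax}. This is exactly what allows the hypothesis $\Phi\in\nabla_2$ to be dropped from the present statement.

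First I would recall the pointwise estimate \eqref{poiwseestfrmax}: for every ball $B=B(x,r)$ and every $y\in B$,
$$
M_{\a} f(y)\ \lesssim\ (Mf(y))^{\beta}\,\|f\|_{\mathcal{M}^{\Phi,\varphi}}^{1-\beta}.
$$
An inspection of the proof of Theorem~\ref{AdGulFrMaxOrlMor} shows that this inequality is obtained from the decomposition $f=f_1+f_2$, Hedberg's trick, Lemma~\ref{lemHold}, the membership $\varphi\in\Omega_{\Phi}$ and condition \eqref{eq3.6.Vfrmax} alone, and that $\Phi\in\nabla_2$ is never used. Applying $\|\cdot\|_{WL^{\Psi}(B)}$ to both sides, where $\|\cdot\|_{WL^{\Psi}(B)}=\|\cdot\,\chi_{B}\|_{WL^{\Psi}}$, we get
$$
\|M_{\a} f\|_{WL^{\Psi}(B)}\ \lesssim\ \big\|(Mf)^{\beta}\big\|_{WL^{\Psi}(B)}\,\|f\|_{\mathcal{M}^{\Phi,\varphi}}^{1-\beta}.
$$

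The next step is the weak-Orlicz homogeneity identity
$$
\big\|g^{\beta}\big\|_{WL^{\Psi}(B)}=\|g\|_{WL^{\Phi}(B)}^{\beta},\qquad \Psi(t)=\Phi(t^{1/\beta}),
$$
which replaces the inequality $\|(Mf)^{\beta}\|_{L^{\Psi}(B)}\le\|Mf\|_{L^{\Phi}(B)}^{\beta}$ used in the strong case. It follows from a short distribution-function computation: since $d_{g^{\beta}\chi_{B}}(t)=d_{g\chi_{B}}(t^{1/\beta})$ and $\Psi(t/\lambda)=\Phi\big(t^{1/\beta}/\lambda^{1/\beta}\big)$, the substitution $s=t^{1/\beta}$ in the supremum defining the weak quasinorm gives $\sup_{t>0}\Psi(t/\lambda)\,d_{g^{\beta}\chi_{B}}(t)=\sup_{s>0}\Phi(s/\lambda^{1/\beta})\,d_{g\chi_{B}}(s)$, and the latter is $\le1$ precisely when $\lambda^{1/\beta}\ge\|g\|_{WL^{\Phi}(B)}$; taking the infimum over admissible $\lambda$ yields the identity. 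Together with the inverse relation $\Psi^{-1}(s)=(\Phi^{-1}(s))^{\beta}$ and $\eta(x,r)=\varphi(x,r)^{\beta}$, which give $\eta(x,r)^{-1}\Psi^{-1}(|B|^{-1})=\big(\varphi(x,r)^{-1}\Phi^{-1}(|B|^{-1})\big)^{\beta}$, this produces for each ball $B=B(x,r)$
$$
\eta(x,r)^{-1}\Psi^{-1}(|B|^{-1})\,\|M_{\a} f\|_{WL^{\Psi}(B)}\ \lesssim\ \Big(\varphi(x,r)^{-1}\Phi^{-1}(|B|^{-1})\,\|Mf\|_{WL^{\Phi}(B)}\Big)^{\beta}\,\|f\|_{\mathcal{M}^{\Phi,\varphi}}^{1-\beta}.
$$
Taking the supremum over all $B$ and bounding the first factor, via Theorem~\ref{thm4.4.w} (applicable because $\varphi\in\Omega_{\Phi}$ satisfies \eqref{bounmax} and $\Phi\in\Delta^{\prime}$), by $\|Mf\|_{W\mathcal{M}^{\Phi,\varphi}}^{\beta}\lesssim\|f\|_{\mathcal{M}^{\Phi,\varphi}}^{\beta}$, we conclude $\|M_{\a} f\|_{W\mathcal{M}^{\Psi,\eta}}\lesssim\|f\|_{\mathcal{M}^{\Phi,\varphi}}$.

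I do not anticipate a genuine obstacle: the proof is a transcription of the strong-type argument. The only new ingredient is the homogeneity identity $\|g^{\beta}\|_{WL^{\Psi}(B)}=\|g\|_{WL^{\Phi}(B)}^{\beta}$, which is the distribution-function change of variables above built on \eqref{worlpr} and the choice $\Psi(t)=\Phi(t^{1/\beta})$; beyond it, all that is needed is the routine bookkeeping already recorded in the remark after Theorem~\ref{AdGulFrMaxOrlMor}, namely $\varphi\in\Omega_{\Phi}\Rightarrow\eta\in\Omega_{\Psi}$ and $\Phi\in\Delta^{\prime}\Rightarrow\Psi\in\Delta^{\prime}$ (the latter ensuring, via $\Delta^{\prime}\subset\Delta_{2}\subset\mathcal{Y}$, that $\Psi^{-1}$ is the genuine inverse used above).
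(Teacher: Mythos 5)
Your proposal is correct and follows essentially the same route as the paper's own proof: the pointwise estimate \eqref{poiwseestfrmax} (which indeed never uses $\nabla_2$), the relation $\|(Mf)^{\beta}\|_{WL^{\Psi}(B)}\le\|Mf\|_{WL^{\Phi}(B)}^{\beta}$ obtained from \eqref{worlpr} with $\Psi(t)=\Phi(t^{1/\beta})$, and then the weak boundedness of $M$ from Theorem \ref{thm4.4.w} together with $\Psi^{-1}=(\Phi^{-1})^{\beta}$ and $\eta=\varphi^{\beta}$. The only cosmetic difference is that you establish the homogeneity relation as an identity via the distribution-function substitution, whereas the paper only records the inequality it needs.
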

\begin{proof}
By using the inequality \eqref{poiwseestfrmax} we have
$$
\|M_{\a} f\|_{WL^{\Psi}(B)}\lesssim \|(Mf)^{\beta}\|_{WL^{\Psi}(B)}\, \|f\|_{\mathcal{M}^{\Phi,\varphi}}^{1-\beta},
$$
where $B=B(x,r)$.

Note that from \eqref{worlpr} we get
$$
\sup_{t>0}\Psi\left(\frac{t^\beta}{\|Mf\|_{WL^{\Phi}(B)}^{\beta}}\right)d_{(Mf)^{\beta}}(t^\beta)=\sup_{t>0} \Phi\left(\frac{t}{\|Mf\|_{WL^{\Phi}(B)}}\right)d_{Mf}(t)\leq 1.
$$
Thus $\|(Mf)^{\beta}\|_{WL^{\Psi}(B)}\leq \|Mf\|_{WL^{\Phi}(B)}^{\beta}$. Consequently by using the weak boundedness of the maximal operator, we get
\begin{align*}
&\eta(x,r)^{-1}\Psi^{-1}(|B|^{-1})\|M_{\a} f\|_{WL^{\Psi}(B)}\lesssim \eta(x,r)^{-1}\Psi^{-1}(|B|^{-1})\|Mf\|_{WL^{\Phi}(B)}^{\beta}\, \|f\|_{\mathcal{M}^{\Phi,\varphi}}^{1-\beta}\\
&=\left(\varphi(x,r)^{-1}\Phi^{-1}(|B|^{-1})\|Mf\|_{WL^{\Phi}(B)}\right)^{\beta}\, \|f\|_{\mathcal{M}^{\Phi,\varphi}}^{1-\beta} \lesssim \|f\|_{\mathcal{M}^{\Phi,\varphi}}.
\end{align*}
By taking the supremum of all $B$, we get the desired result.
\end{proof}

\begin{thm} (Weak version of Adams type result) \label{AdGulFrMaxOrlMorNecW}

Let $0< \a<n$, $\Phi\in \Delta^{\prime}$, $\varphi \in \Omega_{\Phi}$, $\beta\in(0,1)$, $\eta(t)\equiv\varphi(t)^{\beta}$ and $\Psi(t)\equiv\Phi(t^{1/\beta})$.

$1.~$ If $\varphi(t)$ satisfies \eqref{bounmax}, then the condition \eqref{eq3.6.V} is sufficient for the boundedness of $M_{\a}$ from $\mathcal{M}^{\Phi,\varphi}(\Rn)$ to $W\mathcal{M}^{\Psi,\eta}(\Rn)$.

$2.~$ If $\varphi\in{\mathcal{G}}_{\Phi}$, then the condition \eqref{condAdams}
is necessary for the boundedness of $M_{\a}$ from $\mathcal{M}^{\Phi,\varphi}(\Rn)$ to $W\mathcal{M}^{\Psi,\eta}(\Rn)$.

$3.~$ If $\varphi\in{\mathcal{G}}_{\Phi}$, then the condition \eqref{condAdams}
is necessary and sufficient for the boundedness of $M_{\a}$ from $\mathcal{M}^{\Phi,\varphi}(\Rn)$ to $W\mathcal{M}^{\Psi,\eta}(\Rn)$.
\end{thm}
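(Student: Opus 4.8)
The plan is to follow the three-step pattern of the strong Adams-type result (Theorem~\ref{AdGulFrMaxOrlMorNec}), with the strong target space $\mathcal{M}^{\Psi,\eta}(\Rn)$ replaced by the weak one $W\mathcal{M}^{\Psi,\eta}(\Rn)$ and the weak-type machinery fed in throughout: Theorem~\ref{AdGulFrMaxOrlMorW} in place of Theorem~\ref{AdGulFrMaxOrlMor}, Theorem~\ref{thm4.4.w} in place of Theorem~\ref{thm4.4.}, and Lemma~\ref{charorlcw} in place of Lemma~\ref{charorlc}. Part~1 is then immediate: condition \eqref{eq3.6.V} is exactly the $x$-independent instance of \eqref{eq3.6.Vfrmax}, so Theorem~\ref{AdGulFrMaxOrlMorW} applies word for word. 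Note that no $\nabla_2$ assumption appears in the present statement, and none is needed, because the weak boundedness of $M$ used here (Theorem~\ref{thm4.4.w}) does not require it.

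For Part~2 I would run the characteristic-function test, copying the necessity half of Theorem~\ref{AdGulFrMaxOrlMorNec}. Fix $B_0=B(x_0,t_0)$. Since $\varphi\in{\mathcal{G}}_{\Phi}$, Lemma~\ref{charOrlMor} gives $\chi_{B_0}\in\mathcal{M}^{\Phi,\varphi}(\Rn)$ with $\|\chi_{B_0}\|_{\mathcal{M}^{\Phi,\varphi}}\le C/\varphi(t_0)$, so the assumed boundedness puts $M_\a\chi_{B_0}$ in $W\mathcal{M}^{\Psi,\eta}(\Rn)$. By Lemma~\ref{estFrMax}, $t_0^{\alpha}\le C\,M_\a\chi_{B_0}(x)$ for every $x\in B_0$, hence by Lemma~\ref{charorlcw},
\begin{align*}
t_0^{\alpha}&\le C\,\Psi^{-1}(|B_0|^{-1})\,\|M_\a\chi_{B_0}\|_{WL^{\Psi}(B_0)}
\le C\,\eta(t_0)\,\|M_\a\chi_{B_0}\|_{W\mathcal{M}^{\Psi,\eta}}\\
&\le C\,\eta(t_0)\,\|\chi_{B_0}\|_{\mathcal{M}^{\Phi,\varphi}}
\le C\,\frac{\eta(t_0)}{\varphi(t_0)}=C\,\varphi(t_0)^{\beta-1},
\end{align*}
i.e. $t_0^{\alpha}\varphi(t_0)\le C\varphi(t_0)^{\beta}$. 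As $t_0>0$ is arbitrary this is \eqref{condAdams}.

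For Part~3 I would deduce everything from Parts~1 and~2 via two elementary facts about $\varphi\in{\mathcal{G}}_{\Phi}$. First, $\varphi\in{\mathcal{G}}_{\Phi}$ forces \eqref{bounmax}: the map $s\mapsto\varphi(s)/\Phi^{-1}(|B(x,s)|^{-1})$ is almost increasing (this is the defining property of ${\mathcal{G}}_{\Phi}$, up to the harmless constant $v_n$ in $|B(x,s)|$), so its essential infimum over $s>t$ is comparable to its value at $s=t$; multiplying by $\Phi^{-1}(|B(x,t)|^{-1})$ returns $\varphi(t)$, and the supremum over $r<t<\infty$ of $\varphi(t)$ is $\lesssim\varphi(r)$ since $\varphi$ is almost decreasing. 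Second, \eqref{condAdams} together with $\varphi$ almost decreasing and $\beta>0$ gives \eqref{eq3.6.V}: from $r^{\alpha}\varphi(r)\le C\varphi(r)^{\beta}$ and $\varphi(r)\le C\varphi(t)$ for $r\ge t$ one gets $\sup_{t<r<\infty}r^{\alpha}\varphi(r)\lesssim\varphi(t)^{\beta}$, and adding the $t^{\alpha}\varphi(t)$ term reproduces \eqref{eq3.6.V}. Hence, for $\varphi\in{\mathcal{G}}_{\Phi}$, \eqref{bounmax} holds so Part~1 applies, and \eqref{condAdams}$\Rightarrow$\eqref{eq3.6.V} makes \eqref{condAdams} sufficient; Part~2 supplies necessity.

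None of this involves a real obstacle: the only points needing attention are the two ${\mathcal{G}}_{\Phi}$-implications of Part~3 and the bookkeeping of which hypotheses (in particular $\nabla_2$) may be dropped in passing from the strong to the weak statement. The genuinely substantive work --- the pointwise estimate \eqref{poiwseestfrmax} and the weak-norm rescaling $\|(Mf)^{\beta}\|_{WL^{\Psi}(B)}\le\|Mf\|_{WL^{\Phi}(B)}^{\beta}$ --- has already been carried out in Theorem~\ref{AdGulFrMaxOrlMorW}.
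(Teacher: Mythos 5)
Your proposal is correct and follows essentially the same route as the paper: Part 1 by invoking Theorem~\ref{AdGulFrMaxOrlMorW} (hence no $\nabla_2$ needed), Part 2 by the characteristic-function test combining Lemmas~\ref{estFrMax}, \ref{charorlcw} and \ref{charOrlMor}, and Part 3 by combining the first two parts. The only difference is that you spell out the two implications (that $\varphi\in\mathcal{G}_{\Phi}$ yields \eqref{bounmax} and that \eqref{condAdams} together with almost decreasingness yields \eqref{eq3.6.V}) which the paper leaves implicit; these checks are correct and harmless additions.
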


\begin{proof}
The first part of the theorem is a corollary of Theorem \ref{AdGulFrMaxOrlMorW}.

We shall now prove the second part. Let $B_0=B(x_0,t_0)$ and $x\in B_0$. By Lemma \ref{estFrMax} we have $t_0^{\alpha}\leq C M_{\a} \chi_{B_0}(x)$. Therefore, by Lemmas \ref{charorlcw} and \ref{charOrlMor}
\begin{align*}
t_0^{\alpha}&\leq C\Psi^{-1}(|B_0|^{-1})\|M_{\a} \chi_{B_0}\|_{WL^{\Psi}(B_0)} \leq C\eta(t_0)\|M_{\a} \chi_{B_0}\|_{W\mathcal{M}^{\Psi,\eta}}
\\
&\leq C\eta(t_0)\|\chi_{B_0}\|_{\mathcal{M}^{\Phi,\varphi}}\leq C\frac{\eta(t_0)}{\varphi(t_0)}= C \varphi(t_0)^{\beta-1}.
\end{align*}
Since this is true for every $t_0>0$, we are done.

The third statement of the theorem follows from the first and second parts of the theorem.
\end{proof}
If we take $\Phi(t)=t^{p},\,p\in[1,\infty)$ and $\beta=\frac{p}{q}$ with $p < q<\infty$ at Theorem \ref{AdGulFrMaxOrlMorNecW} we get the following corollary which also can be seen as a special case of \cite[Theorem 2]{HakNakSaw}.
\begin{cor}\label{gmcoradw}
Let $1\le p < q<\infty$, $0< \a<n$ and $\varphi \in \Omega_{p}$.

$1.~$ If $\varphi(t)$ satisfies \eqref{eq4.6.GSMax}, then the condition
\eqref{eq3.6.VGM} is sufficient for the boundedness of $M_{\a}$ from $\mathcal{M}^{p,\varphi}(\Rn)$ to $W\mathcal{M}^{q,\varphi^{\frac{p}{q}}}(\Rn)$.

$2.~$ If $\varphi\in{\mathcal{G}}_{p}$, then the condition
\eqref{condad} is necessary for the boundedness of $M_{\a}$ from $\mathcal{M}^{p,\varphi}(\Rn)$ to $W\mathcal{M}^{q,\varphi^{\frac{p}{q}}}(\Rn)$.

$3.~$ If $\varphi\in{\mathcal{G}}_{p}$, then the condition \eqref{condad}
is necessary and sufficient for the boundedness of $M_{\a}$ from $\mathcal{M}^{p,\varphi}(\Rn)$ to $W\mathcal{M}^{q,\varphi^{\frac{p}{q}}}(\Rn)$.
\end{cor}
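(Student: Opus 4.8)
The plan is to obtain the corollary as a direct specialization of Theorem~\ref{AdGulFrMaxOrlMorNecW} with $\Phi(t)=t^{p}$, $p\in[1,\infty)$, and $\beta=\frac{p}{q}$. Since $0<p<q$ we have $\beta\in(0,1)$, and $\Phi(t)=t^{p}$ is a Young function with $\Phi(tr)=\Phi(t)\Phi(r)$, hence $\Phi\in\Delta^{\prime}$; thus the standing hypotheses of Theorem~\ref{AdGulFrMaxOrlMorNecW} are met for every $p\geq 1$ (the weak-type statement, unlike the strong one, does not require $\nabla_{2}$, which is why $p=1$ is admissible here). First I would identify the spaces involved: $\Psi(t)=\Phi(t^{1/\beta})=t^{p/\beta}=t^{q}$, so $L^{\Psi}=L^{q}$ and $WL^{\Psi}=WL^{q}$, and with $\eta(t)=\varphi(t)^{\beta}=\varphi(t)^{p/q}$ the target space $W\mathcal{M}^{\Psi,\eta}(\Rn)$ is precisely $W\mathcal{M}^{q,\varphi^{p/q}}(\Rn)$; moreover $\Omega_{t^{p}}=\Omega_{p}$ and $\mathcal{G}_{t^{p}}=\mathcal{G}_{p}$ by the very definitions of these classes.

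Next I would rewrite the three structural conditions of Theorem~\ref{AdGulFrMaxOrlMorNecW} in the present setting. Since $\Phi^{-1}(s)=s^{1/p}$ and $|B(x,t)|^{-1}=(v_{n}t^{n})^{-1}$, one gets $\Phi^{-1}(|B(x,t)|^{-1})=v_{n}^{-1/p}\,t^{-n/p}$; inserting this into \eqref{bounmax} the dimensional constant $v_{n}^{-1/p}$ cancels between the outer factor and the quotient $\varphi(x,s)/\Phi^{-1}(|B(x,s)|^{-1})$, so \eqref{bounmax} reduces exactly to \eqref{eq4.6.GSMax}. Likewise, because $\beta=p/q$, the right-hand sides $\varphi(t)^{\beta}$ in \eqref{eq3.6.V} and \eqref{condAdams} become $\varphi(t)^{p/q}$, so \eqref{eq3.6.V} is verbatim \eqref{eq3.6.VGM} and \eqref{condAdams} is verbatim \eqref{condad}.

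With these identifications in hand, each part of the corollary is literally the corresponding part of Theorem~\ref{AdGulFrMaxOrlMorNecW}: part~1 (sufficiency of \eqref{eq3.6.VGM} under the hypothesis \eqref{eq4.6.GSMax}), part~2 (necessity of \eqref{condad} when $\varphi\in\mathcal{G}_{p}$), and part~3 (the two-sided characterization when $\varphi\in\mathcal{G}_{p}$) follow at once. There is no real obstacle; the only genuine computation — hence the only place a slip could creep in — is the cancellation of $v_{n}^{-1/p}$ in passing from \eqref{bounmax} to \eqref{eq4.6.GSMax}. For orientation one may also note, as is implicit in part~3 of Theorem~\ref{AdGulFrMaxOrlMorNecW}, that $\varphi\in\mathcal{G}_{p}$ already forces \eqref{eq4.6.GSMax}: the almost increase of $t\mapsto\varphi(t)t^{n/p}$ together with the almost decrease of $\varphi$ bounds $\es_{t<s<\infty}\varphi(s)s^{n/p}$ by a constant multiple of $\varphi(t)t^{n/p}$, after which the left side of \eqref{eq4.6.GSMax} is $\lesssim\sup_{t>r}\varphi(t)\lesssim\varphi(r)$.
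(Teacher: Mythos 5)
Your proposal is correct and follows exactly the paper's route: the corollary is obtained by specializing Theorem~\ref{AdGulFrMaxOrlMorNecW} to $\Phi(t)=t^{p}$, $\beta=p/q$, under which $\Psi(t)=t^{q}$, $\eta=\varphi^{p/q}$, \eqref{bounmax} becomes \eqref{eq4.6.GSMax}, \eqref{eq3.6.V} becomes \eqref{eq3.6.VGM}, and \eqref{condAdams} becomes \eqref{condad}. Your explicit verification of these identifications (including the cancellation of the constant $v_n^{-1/p}$ and the remark that $\mathcal{G}_p$ already yields \eqref{eq4.6.GSMax}) only spells out details the paper leaves implicit.
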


If we take $\varphi(t)=\frac{\Phi^{-1}\big(t^{-n}\big)}{\Phi^{-1}\big(t^{-\lambda}\big)}$, $0 \le \lambda \le n$, $\Psi(t)\equiv\Phi(t^{1/\beta})$, $\beta\in(0,1)$,
$$
\eta(t)\equiv\varphi(t)^{\beta}=\Big(\frac{\Phi^{-1}\big(t^{-n}\big)}{\Phi^{-1}\big(t^{-\lambda}\big)}\Big)^{\beta}
=\frac{\Psi^{-1}\big(t^{-n}\big)}{\Psi^{-1}\big(t^{-\lambda}\big)},
$$
at Theorem \ref{AdGulFrMaxOrlMorNecW} we get the following corollary.
\begin{cor}\label{MarchN6W}
Let $\Phi\in \Delta^{\prime}$, $\Psi(t)\equiv\Phi(t^{1/\beta})$ and $\beta\in(0,1)$. If the condition
\eqref{intcondAdamsOrlMorNw}
is satisfied, then the condition
\eqref{NScondAdamsOrlMorNw}
is necessary and sufficient for the boundedness of $M_{\a}$ from $\mathcal{M}^{\Phi,\lambda}(\Rn)$ to $W\mathcal{M}^{\Psi,\lambda}(\Rn)$.
\end{cor}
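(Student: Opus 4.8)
The plan is to derive Corollary \ref{MarchN6W} directly from Theorem \ref{AdGulFrMaxOrlMorNecW} by specializing the abstract weight $\varphi$ to the concrete weight $\varphi(t)=\frac{\Phi^{-1}(t^{-n})}{\Phi^{-1}(t^{-\lambda})}$ attached to the Orlicz-Morrey space $\mathcal{M}^{\Phi,\lambda}$. First I would verify the bookkeeping that makes this substitution legitimate: with $\Psi(t)=\Phi(t^{1/\beta})$ one has $\Psi^{-1}(s)=\Phi^{-1}(s)^{\beta}$, so $\eta(t)=\varphi(t)^{\beta}=\Big(\tfrac{\Phi^{-1}(t^{-n})}{\Phi^{-1}(t^{-\lambda})}\Big)^{\beta}=\tfrac{\Psi^{-1}(t^{-n})}{\Psi^{-1}(t^{-\lambda})}$, which is exactly the weight defining $W\mathcal{M}^{\Psi,\lambda}$; hence $\mathcal{M}^{\Phi,\varphi}=\mathcal{M}^{\Phi,\lambda}$ and $W\mathcal{M}^{\Psi,\eta}=W\mathcal{M}^{\Psi,\lambda}$. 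I would also note, as in the remark following Theorem \ref{AdGulFrMaxOrlMor}, that $\Phi\in\Delta^{\prime}$ forces $\Psi\in\Delta^{\prime}$, and that this particular $\varphi$ lies in $\Omega_{\Phi}$ (indeed in $\mathcal{G}_{\Phi}$), so all structural hypotheses of Theorem \ref{AdGulFrMaxOrlMorNecW} are in force.

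Next I would match the two abstract conditions of Theorem \ref{AdGulFrMaxOrlMorNecW} against the displayed hypotheses of the corollary. Condition \eqref{bounmax}, after substituting $\varphi(x,s)=\tfrac{\Phi^{-1}(|B(x,s)|^{-1})}{\Phi^{-1}(|B(x,s)|^{-\lambda/n})}$, reduces: the ratio $\tfrac{\varphi(x,s)}{\Phi^{-1}(|B(x,s)|^{-1})}=\tfrac{1}{\Phi^{-1}(|B(x,s)|^{-\lambda/n})}$ is increasing in $s$, so the essential infimum over $s>t$ is attained at $s=t$, and \eqref{bounmax} collapses to the monotonicity statement $\sup_{r<t<\infty}\tfrac{\Phi^{-1}(|B(x,t)|^{-1})}{\Phi^{-1}(|B(x,t)|^{-\lambda/n})}\lesssim\tfrac{\Phi^{-1}(|B(x,r)|^{-1})}{\Phi^{-1}(|B(x,r)|^{-\lambda/n})}$, which is the assumed regularity hypothesis \eqref{intcondAdamsOrlMorNw} up to the harmless factor $r^{\alpha}$ inside the supremum — indeed \eqref{intcondAdamsOrlMorNw} is precisely the Adams-type regularity condition \eqref{eq3.6.V} once the $t^{\alpha}\varphi(t)$ term is absorbed. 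Similarly, the sufficiency condition \eqref{eq3.6.V} for this $\varphi$ becomes $t^{\alpha}\tfrac{\Phi^{-1}(t^{-n})}{\Phi^{-1}(t^{-\lambda})}+\sup_{t<r<\infty}r^{\alpha}\tfrac{\Phi^{-1}(r^{-n})}{\Phi^{-1}(r^{-\lambda})}\le C\big(\tfrac{\Phi^{-1}(t^{-n})}{\Phi^{-1}(t^{-\lambda})}\big)^{\beta}$, and under \eqref{intcondAdamsOrlMorNw} the supremum term is controlled by $Ct^{\alpha}\tfrac{\Phi^{-1}(t^{-n})}{\Phi^{-1}(t^{-\lambda})}$, so the whole left-hand side is comparable to $t^{\alpha}\tfrac{\Phi^{-1}(t^{-n})}{\Phi^{-1}(t^{-\lambda})}$ and \eqref{eq3.6.V} becomes equivalent to \eqref{NScondAdamsOrlMorNw}, namely $t^{\alpha}\le C\big(\tfrac{\Phi^{-1}(t^{-n})}{\Phi^{-1}(t^{-\lambda})}\big)^{\beta-1}$. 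For the necessity direction I would invoke part 2 of Theorem \ref{AdGulFrMaxOrlMorNecW}, which only requires $\varphi\in\mathcal{G}_{\Phi}$; checking $\varphi\in\mathcal{G}_{\Phi}$ amounts to verifying that $t\mapsto\tfrac{\Phi^{-1}(t^{-n})}{\Phi^{-1}(t^{-\lambda})}$ is almost decreasing and that $t\mapsto\tfrac{1}{\Phi^{-1}(t^{-\lambda})}$ is almost increasing, both of which follow from monotonicity of $\Phi^{-1}$ for $0\le\lambda\le n$.

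Putting these together: under \eqref{intcondAdamsOrlMorNw}, condition \eqref{NScondAdamsOrlMorNw} implies \eqref{eq3.6.V}, so part 1 of Theorem \ref{AdGulFrMaxOrlMorNecW} gives boundedness of $M_{\alpha}$ from $\mathcal{M}^{\Phi,\lambda}$ to $W\mathcal{M}^{\Psi,\lambda}$; conversely part 2 gives that boundedness forces \eqref{condAdams} for this $\varphi$, which is exactly \eqref{NScondAdamsOrlMorNw}. Hence \eqref{NScondAdamsOrlMorNw} is necessary and sufficient, which is the assertion. The only genuinely delicate point — the main obstacle, such as it is — is the reduction of \eqref{bounmax} and \eqref{eq3.6.V} for the concrete weight: one must be careful that the essential infimum in \eqref{bounmax} really is governed by the endpoint $s=t$ (this uses only that $\Phi^{-1}$ is increasing, hence $s\mapsto\Phi^{-1}(|B(x,s)|^{-\lambda/n})^{-1}$ is increasing for $\lambda\ge0$), and that the factor $|B(x,t)|^{-\lambda/n}$ versus $t^{-\lambda}$ discrepancy is absorbed into the $\approx$ constants via the remark that $\Phi^{-1}(Cr)\approx\Phi^{-1}(r)$. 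These are routine but should be spelled out so the reader sees that \eqref{intcondAdamsOrlMorNw} and \eqref{NScondAdamsOrlMorNw} are the faithful translations of the abstract hypotheses.
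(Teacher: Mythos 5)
Your route is the same as the paper's: Corollary \ref{MarchN6W} is obtained there precisely by substituting $\varphi(t)=\Phi^{-1}(t^{-n})/\Phi^{-1}(t^{-\lambda})$, $\eta=\varphi^{\beta}$, $\Psi(t)=\Phi(t^{1/\beta})$ into Theorem \ref{AdGulFrMaxOrlMorNecW}, and your translation of \eqref{bounmax} and \eqref{eq3.6.V} into \eqref{intcondAdamsOrlMorNw} and \eqref{NScondAdamsOrlMorNw}, as well as the identifications $\Psi^{-1}=(\Phi^{-1})^{\beta}$, $\mathcal{M}^{\Phi,\varphi}=\mathcal{M}^{\Phi,\lambda}$, $W\mathcal{M}^{\Psi,\eta}=W\mathcal{M}^{\Psi,\lambda}$, is the intended one. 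One justification, however, is wrong as stated: the almost-decreasing property of $\varphi(t)=\Phi^{-1}(t^{-n})/\Phi^{-1}(t^{-\lambda})$ does not ``follow from monotonicity of $\Phi^{-1}$'' --- numerator and denominator both decrease in $t$, and for a general Young function in $\Delta^{\prime}$ there is no reason for their ratio to be (almost) monotone; only the second half of the $\mathcal{G}_{\Phi}$ condition, the almost increase of $\varphi(t)/\Phi^{-1}(t^{-n})=1/\Phi^{-1}(t^{-\lambda})$, is automatic. The needed almost-decrease is exactly what the assumed condition \eqref{intcondAdamsOrlMorNw} supplies: for $r>t$ it gives $\varphi(r)\le C(t/r)^{\alpha}\varphi(t)\le C\varphi(t)$, which is the same observation you already used when collapsing \eqref{bounmax}. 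With this repair $\varphi\in\mathcal{G}_{\Phi}$, so Lemma \ref{charOrlMor} (together with Lemma \ref{charorlcw}) is legitimately available in the necessity step via part 2 of Theorem \ref{AdGulFrMaxOrlMorNecW}, and the rest of your argument --- including the correct observation that no $\nabla_2$ assumption is needed in the weak case --- goes through as written.
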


If we take into account Remark \ref{fat82} we get the following weak version of Adams result for Morrey spaces (see Theorem \ref{Adams1}).

\begin{cor}
Let $0< \alpha<n$, $1\le p < q<\infty$ and $0\le\lambda< n-\alpha p$. Then
$M_{\alpha}$ is bounded from $\mathcal{M}^{p,\lambda}(\Rn)$
to $W\mathcal{M}^{q,\lambda}(\Rn)$  if and only if $\frac{1}{p}-\frac{1}{q}=\frac{\alpha}{n-\lambda}$.
\end{cor}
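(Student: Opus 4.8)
The plan is to obtain this statement as the special case $\Phi(t)=t^{p}$ of Corollary \ref{MarchN6W}. Put $\beta=\frac{p}{q}$; since $1\le p<q<\infty$ we have $\beta\in(0,1)$, and $\Psi(t)\equiv\Phi(t^{1/\beta})=t^{q}$. For the function $\varphi(t)=\frac{\Phi^{-1}(t^{-n})}{\Phi^{-1}(t^{-\lambda})}=t^{(\lambda-n)/p}$ used in Corollary \ref{MarchN6W} one has $\eta(t)\equiv\varphi(t)^{\beta}=t^{(\lambda-n)/q}=\frac{\Psi^{-1}(t^{-n})}{\Psi^{-1}(t^{-\lambda})}$, so by Definitions \ref{OrlMor} and \ref{WOrlMor} we have $\mathcal{M}^{\Phi,\lambda}(\Rn)=\mathcal{M}^{p,\lambda}(\Rn)$ and $W\mathcal{M}^{\Psi,\lambda}(\Rn)=W\mathcal{M}^{q,\lambda}(\Rn)$. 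Thus the asserted boundedness is exactly the boundedness of $M_{\a}$ from $\mathcal{M}^{\Phi,\lambda}(\Rn)$ to $W\mathcal{M}^{\Psi,\lambda}(\Rn)$ treated in Corollary \ref{MarchN6W}.

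Next I would check the hypotheses of Corollary \ref{MarchN6W} in this case. The function $\Phi(t)=t^{p}$ is a Young function and lies in $\Delta^{\prime}$ (with constant $1$, since $(tr)^{p}=t^{p}r^{p}$); note that the weak-type corollary imposes no $\nabla_{2}$ condition, which is why $p=1$ is allowed. It then remains to verify the regularity assumption \eqref{intcondAdamsOrlMorNw}: with $\Phi(t)=t^{p}$ it reads $\sup_{t<r<\infty}r^{\,\alpha+(\lambda-n)/p}\le C\,t^{\,\alpha+(\lambda-n)/p}$, and the hypothesis $0\le\lambda<n-\alpha p$ makes the exponent $\alpha+\frac{\lambda-n}{p}$ strictly negative, so the supremum is attained as $r\to t+$ and equals $t^{\,\alpha+(\lambda-n)/p}$; hence \eqref{intcondAdamsOrlMorNw} holds with $C=1$. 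This is precisely the computation recorded in Remark \ref{fat82}.

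Finally I would identify condition \eqref{NScondAdamsOrlMorNw}. Its right-hand side is $C\big(t^{(\lambda-n)/p}\big)^{\beta-1}=C\,t^{(n-\lambda)(1/p-1/q)}$, so \eqref{NScondAdamsOrlMorNw} becomes $t^{\alpha}\le C\,t^{(n-\lambda)(1/p-1/q)}$ for all $t>0$. Since $n-\lambda>0$, a pure-power inequality of this type can hold on all of $(0,\infty)$ only when the two exponents coincide, i.e. $\alpha=(n-\lambda)\big(\tfrac1p-\tfrac1q\big)$, equivalently $\tfrac1p-\tfrac1q=\tfrac{\alpha}{n-\lambda}$; conversely this equality clearly makes it hold. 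Invoking Corollary \ref{MarchN6W} then yields that $M_{\a}$ is bounded from $\mathcal{M}^{p,\lambda}(\Rn)$ to $W\mathcal{M}^{q,\lambda}(\Rn)$ if and only if $\tfrac1p-\tfrac1q=\tfrac{\alpha}{n-\lambda}$.

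There is no substantive obstacle here: once the substitution $\Phi(t)=t^{p}$, $\beta=p/q$ is made, everything reduces to elementary manipulation of power functions and their exponents, as summarized in Remark \ref{fat82}. The only two points deserving a moment's care are that the \emph{strict} inequality $\lambda<n-\alpha p$ is what guarantees finiteness of the supremum in \eqref{intcondAdamsOrlMorNw}, and that the Hardy--Littlewood--Sobolev balance $\tfrac1p-\tfrac1q=\tfrac{\alpha}{n-\lambda}$ is forced by testing the power inequality both as $t\to0+$ and as $t\to\infty$.
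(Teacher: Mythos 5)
Your proposal is correct and follows exactly the paper's route: the paper also obtains this corollary by specializing Corollary \ref{MarchN6W} with $\Phi(t)=t^{p}$, $\beta=p/q$ and invoking the exponent computations of Remark \ref{fat82}, namely that \eqref{intcondAdamsOrlMorNw} reduces to $0\le\lambda<n-\alpha p$ and \eqref{NScondAdamsOrlMorNw} to $\frac1p-\frac1q=\frac{\alpha}{n-\lambda}$. Your write-up merely makes these power-function verifications (and the observation that the absence of a $\nabla_2$ hypothesis permits $p=1$) explicit, which the paper leaves implicit.
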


{\bf Acknowledgements.} The research of V.S. Guliyev and F. Deringoz is partially supported by the grant of Ahi Evran University Scientific Research Project (FEF.A3.16.024).
The research of V.S. Guliyev is  partially supported by the grant of Presidium Azerbaijan National Academy of Science 2015. We thank the referee(s) for careful reading the paper and useful comments.


\begin{thebibliography}{99}

\bibitem{456s}  Orlicz W.  \"{U}ber eine gewisse Klasse von R\"{a}umen vom Typus B. Bull. Acad. Polon. A 1932; 207-220. ;
reprinted in: Collected Papers, PWN, Warszawa 1988, 217-230.

\bibitem{456sb}  Orlicz W. \"{U}ber R\"{a}ume ($L^M$). Bull. Acad. Polon. A 1936; 93-107. ; reprinted in: Collected Papers, PWN, Warszawa 1988, 345-359.

\bibitem{315ze} Kita H. On maximal functions in Orlicz spaces. Proc. Amer. Math. Soc. 1996;124;3019-3025.

\bibitem{315zg} Kita H. On Hardy-Littlewood maximal functions in Orlicz spaces. Math. Nachr. 1997;183;135-155.

\bibitem{95zzc} Cianchi A. Strong and weak type inequalities for some classical operators in Orlicz spaces. J. London Math. Soc. 1999; 60(2);187-202.

\bibitem{Nakai0} Nakai E. Generalized fractional integrals on Orlicz-Morrey spaces. In: Banach and Function Spaces.
(Kitakyushu, 2003) Yokohama: Yokohama Publishers;  2004, 323-333.

\bibitem{SawSugTan} Sawano Y, Sugano S, Tanaka H.  Orlicz-Morrey spaces and fractional operators. Potential Anal. 2012;36(4);517-556.

\bibitem{DerGulSam} Deringoz F, Guliyev VS, Samko S.
Boundedness of maximal and singular operators on generalized Orlicz-Morrey spaces.
Operator Theory, Operator Algebras and Applications, Series: Operator Theory: Advances and Applications 2014;242;139-158.

\bibitem{GulHasSawNak} Guliyev VS, Hasanov SG, Sawano Y, Noi T.  Non-smooth Atomic Decompositions for Generalized
Orlicz-Morrey Spaces of the Third Kind. Acta Appl. Math. 2016;145;133-174.

\bibitem{GalaSawTan} Gala S, Sawano Y, Tanaka H. A remark on two generalized Orlicz-Morrey spaces. J. Approx. Theory. 2015;98;1-9.

\bibitem{Adams} Adams D.R. A note on Riesz potentials. Duke Math. J. 1975;42;765-778.

\bibitem{GulJIA} Guliyev VS. Boundedness of the maximal, potential and singular operators in the generalized Morrey spaces.
J. Inequal. Appl. 2009;Art. ID 503948;20 pp.

\bibitem{GulDoc}  Guliyev VS. Integral operators on function spaces on the homogeneous groups and on domains in $\Rn$.
Doctor's degree dissertation, Mat. Inst. Steklov, Moscow, 1994, 329 pp. (in Russian)

\bibitem{GulBook} Guliyev VS. Function spaces, Integral Operators and Two Weighted Inequalities on Homogeneous Groups. Some Applications.
  Casioglu, Baku, 1999, 332 pp. (in Russian)

\bibitem{Gun2003} Gunawan H. A note on the generalized fractional integral operators. J. Indones. Math. Soc. 2003;9;39-43.

\bibitem{GulAkbMam} Guliyev VS, Akbulut A, Mammadov Y. Boundedness of fractional maximal operator and their higher order commutators in generalized Morrey spaces on Carnot groups. Acta Math. Sci. Ser. B Engl. Ed. 2013;33(5);1329-1346 .

\bibitem{GulShu} Guliyev VS, Shukurov PS. On the Boundedness of the Fractional Maximal
Operator, Riesz Potential and Their Commutators in Generalized Morrey Spaces. Operator Theory:
Advances and Applications 2013;229,175-194.

\bibitem{HenKlep} Hencl S, Kleprlik L. Composition of $q$-quasiconformal mappings and functions
in Orlicz-Sobolev spaces. Illinois J. Math. 2012;56(3);931-955.

\bibitem{GulDerPot}  Deringoz F, Guliyev VS, Hasanov SG. Characterizations for the Riesz potential and its commutators on generalized Orlicz-Morrey spaces. J. Inequal. Appl. 2016;2016:248;22 pp.

\bibitem{SawSugTan2} Sawano Y, Sugano S, Tanaka H.  Generalized fractional integral operators and fractional maximal operators in the framework of Morrey spaces. Trans. Amer. Math. Soc. 2011;363(12);6481-6503.

\bibitem{HakNakSaw} Hakim DI, Nakai E, Sawano Y. Generalized fractional maximal operators and vector-valued inequalities on generalized Orlicz-Morrey spaces. Rev. Mat. Complut. 2016:29(1);59-90.

\end{thebibliography}
\end{document}